\long\def\red#1{\textcolor {red}{#1}}
\def\ulabel#1#2{\@bsphack\if@filesw {\let\thepage\relax \def\protect{\noexpand\noexpand\noexpand}%
\xdef\@gtempa{\write\@auxout{\string
\newlabel{#1}{{#2 \@currentlabel}{\thepage}}}}}\@gtempa
\if@nobreak \ifvmode\nobreak\fi\fi\fi\@esphack} \makeatother
\newcommand{\compcent}[1]{\vcenter{\hbox{$#1\circ$}}}
\newcommand{\comp}{\mathbin{\mathchoice
{\compcent\scriptstyle}{\compcent\scriptstyle}
{\compcent\scriptscriptstyle}{\compcent\scriptscriptstyle}}}
\newcommand {\N}{\mathbb{N}} 
\newcommand {\Z}{\mathbb{Z}} 
\newcommand {\R}{\mathbb{R}} 
\theoremstyle{plain}
\newtheorem{thm}{Theorem}[section]
\newtheorem{cor}[thm]{Corollary}
\newtheorem{lem}[thm]{Lemma}
\newtheorem{prop}[thm]{Proposition}
\newtheorem{clm}{Claim}
\newtheorem{sclm}{Subclaim}[clm]
\newtheorem*{clm*}{Claim}
\newtheorem*{rmk*}{Remark}
\newtheorem*{thm*}{Theorem}
\newtheorem*{lem*}{Lemma}
\newtheorem*{prop*}{Proposition}
\theoremstyle{definition}
\newtheorem{defn}[thm]{Definition}
\newtheorem{rmk}[thm]{Remark}
\def\co{\colon\thinspace}
\renewcommand{\t}{\mathcal T}
\renewcommand{\top}[1]{\operatorname{\textbf{top}}(#1)}
\renewcommand{\bot}[1]{\operatorname{\textbf{bot}}(#1)}
\newcommand{\topc}[1]{\operatorname{\textbf{top}}_c(#1)}
\newcommand{\botc}[1]{\operatorname{\textbf{bot}}_c(#1)}
\newcommand{\mesh}{\operatorname{mesh}}
\newcommand{\rmesh}{\operatorname{rmesh}}
\newcommand{\pres}[2]{\langle\hspace{.5 mm} #1\hspace{.5 mm} | \hspace{.5 mm} #2\hspace{.5 mm} \rangle}
\newcommand{\con}{\operatorname{Con}^\omega\bigl(X,e,d\bigr)}
\newcommand{\ccon}[3]{\operatorname{Con}^\omega\bigl(#1,(#2),(#3)\bigr)}
\newcommand{\gcon}{\operatorname{Con}^\omega\bigl(G,d\bigr)}
\newcommand{\hcon}{\operatorname{Con}^\omega\bigl(H,d\bigr)}
\newcommand{\as}{$\omega$-almost surely}
\newcommand{\diam}[1]{\operatorname{diam}(#1)}
\renewcommand{\phi}{\varphi}
\newcommand{\im}[1]{\operatorname{im}(#1)}
\newcommand{\lab}[1]{\operatorname{\textbf{Lab}}{(#1)}}
\newcommand{\dv}{{\mathrm{div}}}
\newcommand{\Ball}{{\mathrm{B}}}
\renewcommand{\d}{\operatorname{dist}}
\begin{document}
\title{Asymptotic cones of HNN extensions and amalgamated products}
\author{Curt Kent}
\maketitle

\begin{abstract}
Gromov asked whether an asymptotic cone of a finitely generated group was always simply connected or had uncountable fundamental group.  We prove that Gromov's dichotomy holds for asympotic cones with cut points, as well as, HNN extensions and amalgamated products where the associated subgroups are nicely embedded.  We also show a slightly weaker dichotomy for multiple HNN extensions of free groups.  
\end{abstract}

\tableofcontents

\section{Introduction}

Gromov was first to notice a connection between the homotopic properties of asymptotic cones of a finitely generated group and algorithmic properties of the group \cite[Section 5.F]{gr2}.  Gromov asked what kind of fundamental groups can asymptotic cones of finitely generated groups have \cite{gr2}. In particular, he asked whether the following dichotomy is true: the fundamental group of an asymptotic cone of a finitely generated group is always either trivial or of order continuum. One reason for this question was that asymptotic cones of nilpotent groups are simply connected (Pansu, \cite{Pansu}), the same is true for hyperbolic groups since all cones in that case  are $\R$-trees, but asymptotic cones of many solvable non-nilpotent groups (say, the Baumslag-Solitar group $BS(2,1)$ or Sol)  contain Hawaiian earrings, and that seems to be a common property of very many other groups
\cite{Burillo},\cite{ConnerKent}.

Answering Gromov's  question about possible fundamental groups of asymptotic cones, Erschler and Osin showed that every countable group is a subgroup of the fundamental group of an asymptotic cone of a finitely generated group\cite{EO}. Dru\c tu and Sapir proved that, moreover, for every countable group $C$, there exists an asymptotic cone of a finitely generated group $G$ whose fundamental group is the free product of uncountably many copies of $C$ \cite{DS}. (Note that for finitely presented groups $G$, analogs of the results of Erschler-Osin and Dru\c tu-Sapir are still unknown.)

It turned out that Gromov's dichotomy is  false: there exists an asymptotic cone of a finitely generated group whose fundamental group is $\Z$ because the cone is homeomorphic to the direct product of a tree and a circle \cite{OOS}.

A \emph{prairie group} is a group where every asymptotic cone of the group is simply connected.  A group is \emph{constricted} if all of its asymptotic cones have (global) cut-points and \emph{wide} if none of its asymptotic cones have cut-points.  We show that for constricted groups Gromov's dichotomy does hold and that a modified version of Gromov's dichotomy holds for groups which are not wide.

\begin{thm*}[\ref{notwide}]
Let $G$ be a finitely generated group.  If $G$ is constricted, then every asymptotic cone of $G$ is simply connected or contains an uncountably generated free subgroup.  If $G$ is not wide, then $G$ has an asymptotic cone  which is simply connected or contains  an uncountably generated free subgroup.
\end{thm*}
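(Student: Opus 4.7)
The plan is to reduce both halves of the theorem to a single core statement: if $Y$ is any asymptotic cone of $G$ that contains at least one cut point, then $\pi_1(Y)$ is either trivial or contains an uncountably generated free subgroup. The reduction rests on homogeneity of cones of finitely generated groups: for any two points $y,z$ of the ultralimit, left translation in $G$ descends to a self-isometry of $Y$ sending $y$ to $z$, so the existence of a single cut point forces every point of $Y$ to be a cut point. Under this reduction the constricted case is immediate (every cone already has cut points everywhere), and the non-wide case is handled by applying the core statement to whichever cone has a cut point.

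To prove the core statement I would invoke the Dru\c tu--Sapir structure theorem: a homogeneous geodesic metric space in which every point is a cut point is tree-graded with respect to the collection $\mc P$ of maximal connected subsets that themselves contain no cut points. Distinct pieces meet in at most one point, and every simple loop is contained in a single piece. From this one extracts a Seifert--van Kampen type description in which $\pi_1(Y)$, based at any chosen point, is generated via fixed paths by the fundamental groups of the pieces, with the generating sets assembling into a free product indexed by $\mc P$.

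Now dichotomize on pieces. If every piece is simply connected, then $\pi_1(Y) = 1$ by the free product description, so $Y$ is simply connected. Otherwise some piece $P_0$ contains an essential loop $\gamma_0$. Homogeneity of $Y$ supplies, for every point $v\in Y$, an isometric copy of $P_0$ through $v$, and after a careful selection one obtains an uncountable family of pairwise distinct pieces $\{P_\alpha\}$ each carrying a nontrivial loop $\gamma_\alpha$. The free-product structure of $\pi_1(Y)$ then implies that the classes represented by the $\gamma_\alpha$ (properly conjugated into the basepoint by fixed paths) generate a free subgroup of uncountable rank, which is the desired conclusion.

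The principal obstacle is the production of uncountably many \emph{distinct} pieces in the second case. Homogeneity alone only provides an isometric action transitive on points, and any single piece meets uncountably many points, so one must show separately that the tree of pieces genuinely branches. My strategy is to fix a point $v$, use the cut-point property at $v$ to produce distinct complementary components of $Y\setminus\{v\}$, observe that each component witnesses a distinct piece incident to $v$, and then combine this with homogeneity to propagate the branching throughout $Y$, yielding a continuum-sized subfamily of $\mc P$ with nontrivial fundamental group and, through the tree-graded free-product description, the claimed uncountably generated free subgroup of $\pi_1(Y)$.
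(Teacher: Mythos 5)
Your reduction to the single cut-point case is exactly the paper's (this is \ref{fundcutpoint} and \ref{fundcutpoint2}, from which \ref{notwide} follows immediately), but both steps you use to prove that core statement have genuine gaps. First, the ``Seifert--van Kampen type description'' of $\pi_1(Y)$ as a free product indexed by the pieces of the tree-graded structure is false as an equality: such cones are typically not semi-locally simply connected, and Hawaiian-earring phenomena make $\pi_1(Y)$ vastly larger than the free product of the pieces' fundamental groups (take an interval with circles of circumference $1/n$ attached at points $1/n$: it is tree-graded with the circles as pieces, yet its fundamental group is the uncountable Hawaiian earring group, not $\ast_n\mathbb Z$). The paper itself warns of this in the remark following its van Kampen type proposition, noting that $\pi_1(X,x_0)\backslash\bigl(\pi_1(V,x_0)*\pi_1(U,x_0)\bigr)$ is often uncountable. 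What survives is only the embedding direction, and that embedding is precisely the nontrivial content you would still have to prove; the paper proves it by a Phragm\'en--Brouwer argument, cutting off any putative homotopy between two loops separated by a cut point along a component of the preimage of that cut point. Relatedly, your case split is on whether some \emph{piece} carries an essential loop, whereas the theorem's dichotomy is on whether $Y$ is simply connected; without the free-product identity you cannot pass from ``all pieces simply connected'' to ``$Y$ simply connected,'' nor from ``$Y$ not simply connected'' to ``some piece carries an essential loop.'' The paper sidesteps this entirely by starting from an arbitrary essential loop of $Y$ and never localizing it to a piece.

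Second, the step you yourself flag as the principal obstacle --- producing a continuum of pairwise cut-point-separated essential loops --- is genuinely the hard part, and your sketch does not close it. Knowing that $Y\backslash\{v\}$ has at least two components, plus homogeneity, does not by itself yield continuum many components at a point, nor continuum many pieces pairwise separated by cut points. The paper devotes substantial machinery to exactly this: it transversally embeds simplicial trees (\ref{tree}, \ref{corTree}), proves that ultralimits of finite subsets of a cone re-embed isometrically in the cone (\ref{embedd}), and combines these with the divergence criterion \ref{divcondition} of Dru\c tu--Mozes--Sapir to embed a universal $\mathbb R$-tree transversally (\ref{Rtree}); the sphere of radius $2\diam{\gamma}$ in that tree then supplies the continuum of pairwise separated basepoints for the translated loops, and the cut-off argument shows the translates generate a free product of cyclic groups (\ref{fundcutpoint2}), whence the uncountably generated free subgroup. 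Your outline would need an argument of comparable substance at both of these points.
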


The proof requires that we study complements of bounded sets.  While doing this, we obtain the following two results which are interesting in their own right.

\begin{prop*}[\ref{one-ended}]
Let $X$ be a homogeneous geodesic metric space.  Every asymptotic cone of $X$ is one-ended if and only if $X$ is wide if and only if no asymptotic cone of $X$ has a local cut-point.
\end{prop*}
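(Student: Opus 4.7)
I would prove the three-way equivalence via the cycle (C) $\Rightarrow$ (B) $\Rightarrow$ (A) $\Rightarrow$ (C), where (A), (B), (C) denote ``every asymptotic cone of $X$ is one-ended,'' ``$X$ is wide,'' and ``no asymptotic cone of $X$ has a local cut-point,'' respectively. The direction (C) $\Rightarrow$ (B) is immediate: a global cut-point of a connected space is in particular a local cut-point.

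For (B) $\Rightarrow$ (A), I argue the contrapositive. Suppose some cone $C=\conn{d_n}$ is not one-ended, so there exist a bounded set $B\subset C$ and sequences $(x_m),(y_m)$ escaping in two distinct unbounded components of $C\setminus B$. Because $C$ is itself a complete homogeneous geodesic metric space, one can form an asymptotic cone $C'$ of $C$ with scaling sequence $d'_m\to\infty$ chosen so that $\diam{B}/d'_m\to 0$ while $\d(x_m,B)/d'_m$ and $\d(y_m,B)/d'_m$ stay bounded and bounded away from zero. Then $B$ collapses in $C'$ to a single point $\bar B$, while the ultralimits of $(x_m),(y_m)$ lie in distinct components of $C'\setminus\{\bar B\}$, so $\bar B$ is a global cut-point of $C'$. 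By the standard iterated-ultralimit/diagonal argument, $C'$ is itself an asymptotic cone of $X$, contradicting (B).

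For (A) $\Rightarrow$ (C), again argue the contrapositive. Suppose some cone $C=\conn{d_n}$ has a local cut-point at $p$, witnessed by $\epsilon>0$ and $a,b\in B_C(p,\epsilon)$ in different components of $B_C(p,\epsilon)\setminus\{p\}$. Write $p=(p_n),a=(a_n),b=(b_n)$. The separation forces the following bottleneck in $X$: any sequence of $X$-paths from $a_n$ to $b_n$ staying within $B_X(p_n,\epsilon d_n)$ must, $\omega$-almost surely, enter every $B_X(p_n,\delta d_n)$ for $\delta>0$ --- otherwise an ultralimit of such paths would connect $a$ to $b$ inside $B_C(p,\epsilon)$ while avoiding $p$. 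Diagonalize over $\delta\to 0$ to select a scaling sequence $s_n\to\infty$ with $s_n/d_n\to 0$ and representatives $\tilde a_n,\tilde b_n$ at $X$-distance of order $s_n$ from $p_n$ in opposite ``sides,'' so that the cone $C''=\ccon{X}{p_n}{s_n}$ has $p$ as a global cut-point separating $\tilde a$ from $\tilde b$. Now use homogeneity: $C''$ is a homogeneous unbounded geodesic metric space with a global cut-point, so every point is a cut-point. The components of $C''\setminus\{p\}$ must be unbounded, because if one had diameter at most $R$ then by transitivity of the isometry group every cut-point of $C''$ would have an analogous bounded arm, incompatible with $C''$ being an unbounded geodesic space (this is the tree-graded structure in the sense of Dru\c tu--Sapir applied to a homogeneous space). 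Removing a small ball around $p$ therefore leaves at least two unbounded components, so $C''$ is not one-ended, contradicting (A).

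The main technical obstacle is the diagonalization in the first step of (A) $\Rightarrow$ (C): producing a scale $s_n$ and basepoint sequence so that the local bottleneck in $C$ becomes a global cut-point of a genuinely new asymptotic cone of $X$, with careful tracking of both scales and representatives so that the separation property survives the ultralimit. A secondary obstacle is the homogeneity argument promoting a global cut-point to failure of one-endedness, which must rule out the possibility of a bounded ``branch'' on one side of the cut-point.
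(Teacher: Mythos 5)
Your overall route is the same as the paper's: the cycle you describe is exactly the paper's argument reorganized. Your (B) $\Rightarrow$ (A) is the paper's proof verbatim in spirit (collapse the bounded separating set to a point in a further rescaled cone, invoke \ref{limitseparates} and the fact that an asymptotic cone of an asymptotic cone is again an asymptotic cone), and your (A) $\Rightarrow$ (C) combines the paper's unnumbered lemma promoting a local cut-point to a global one (by zooming in, after first checking that the point separates \emph{every} sufficiently small ball) with the observation that a global cut-point kills one-endedness. One small remark: the claim that the ultralimits of $x_m$ and $y_m$ land in distinct components of $C'\setminus\{\bar B\}$ is not automatic and is precisely the content of \ref{limitseparates}; you should either cite it or note that any geodesic from $x_m$ to $y_m$ meets $B$, so any point common to the limits of the two components is represented by points of $B$.

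There is, however, one genuine gap: your argument that the components of $C''\setminus\{p\}$ are unbounded. You assert that if some component had diameter at most $R$, then by homogeneity every point would have a bounded ``arm,'' and that this is ``incompatible with $C''$ being an unbounded geodesic space.'' That incompatibility is exactly what needs to be proved, and nothing in your sketch rules out a homogeneous unbounded geodesic space in which every point carries a uniformly bounded branch; the appeal to tree-graded structure does not obviously close this either. The paper handles this with \ref{unbounded}: since $X$ is unbounded, homogeneous and geodesic, every point of every asymptotic cone lies on a bi-infinite geodesic; given a component $A$ of the complement of the cut-point $x$ and a point $a\in A$, a bi-infinite geodesic through $a$ meets $x$ at most once (geodesics are injective), so at least one of its two rays avoids $x$ entirely and therefore stays in $A$, forcing $A$ to be unbounded. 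You need this statement (or an equivalent) to finish both the ``global cut-point implies not one-ended'' step and, implicitly, the trivial direction of the equivalence, so it should be proved rather than asserted.
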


\begin{prop*}[\ref{embedd}]
Suppose that $X$ is an unbounded homogeneous geodesic metric space and $C_i$ is a sequence of finite point sets from $\con$.  Then $\lim^\omega_e C_i$ embeds isometrically into $\con$.
\end{prop*}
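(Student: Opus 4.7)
Each $c_i^j \in C_i$ is by definition an equivalence class $[(x_n^{i,j})_n]$ of a sequence in $X$. The plan is the standard diagonal trick: given $p \in \lim^\omega_e C_i$ represented by $(b_i) = (c_i^{j_i})_i$, produce a single sequence $(y_n)_n$ in $X$ via a coupling $\iota \co \N \to \N$ with $\iota(n) \to \infty$ \as, and put $\Phi(p) = [(y_n)_n] \in \con$ with $y_n = x_n^{\iota(n), j_{\iota(n)}}$. The coupling will be arranged so that, $\omega$-almost surely, the rescaled $X$-distance between $y_n$ and $y'_n$ matches the cone distance between $c_{\iota(n)}^{j_{\iota(n)}}$ and $c_{\iota(n)}^{j'_{\iota(n)}}$ to within $1/\iota(n) \to 0$.

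\textbf{Diagonal construction.} For each $i$, by definition of cone distance the set
\[
A_i = \bigl\{ n \in \N : |\d(x_n^{i,j}, x_n^{i,j'})/d_n - \d(c_i^j, c_i^{j'})| < 1/i \ \text{for all}\ j, j' \in C_i \bigr\}
\]
is a finite intersection of $\omega$-large sets, hence itself $\omega$-large. Setting $B_K = A_1 \cap \cdots \cap A_K \cap \{n \geq K\}$ and $\iota(n) = \max\{K : n \in B_K\}$ yields $n \in A_{\iota(n)}$ by construction, while $\{n : \iota(n) \geq K\} \supseteq B_K \in \omega$ gives $\iota(n) \to \infty$ \as. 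Boundedness of $\d(b_i, e_i)$ in $\con$ together with $n \in A_{\iota(n)}$ then bounds $\d(y_n, e_n)/d_n$, placing $\Phi(p)$ in $\con$.

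\textbf{Isometry and the main obstacle.} Using $n \in A_{\iota(n)}$ and $1/\iota(n) \to 0$, one computes
\[
\d(\Phi(p), \Phi(p')) = \lim^\omega_n \d\bigl(c_{\iota(n)}^{j_{\iota(n)}}, c_{\iota(n)}^{j'_{\iota(n)}}\bigr),
\]
whereas the distance in $\lim^\omega_e C_i$ is $\lim^\omega_i \d(c_i^{j_i}, c_i^{j'_i})$. The main obstacle is that these two $\omega$-limits, taken along $n$ and $i$ respectively and linked only by $\iota$, need not agree: the pushforward $\iota_*\omega$ generally differs from $\omega$. I would close this gap first on an arbitrary finite subset $\{p_1, \ldots, p_N\} \subseteq \lim^\omega_e C_i$, with representatives $p_k = [(b^k_i)_i]$ and target distances $d_{k\ell} = \lim^\omega_i \d(b^k_i, b^\ell_i)$: the $\omega$-large set
\[
E_K = \bigl\{i \in \N : |\d(b^k_i, b^\ell_i) - d_{k\ell}| < 1/K \ \text{for all}\ k, \ell \bigr\}
\]
lets us pick $i_K \in E_K$ with $i_K \geq K$ and redo the diagonal using $A_{i_1}, A_{i_2}, \ldots$ in place of $A_1, A_2, \ldots$. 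Then $\iota(n) = i_{K(n)}$ lies in $E_{K(n)}$, forcing $\d(b^k_{\iota(n)}, b^\ell_{\iota(n)}) \to d_{k\ell}$ \as, so $\Phi$ is exactly isometric on $\{p_1, \ldots, p_N\}$. Because $\con$ is $\aleph_1$-saturated as an ultraproduct over a non-principal ultrafilter on $\N$, transfinite induction along a well-ordering of $\lim^\omega_e C_i$ (of cardinality at most $2^{\aleph_0}$) packages these finite-subset isometries into a global isometric embedding $\lim^\omega_e C_i \hookrightarrow \con$. Homogeneity of $X$ enters only to ensure the basepoint structure on $\con$ is innocuous.
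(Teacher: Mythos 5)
Your diagonal construction (the sets $A_i$, the function $\iota$ forced to diverge $\omega$-almost surely) is essentially the paper's own device, and your finite-subset correction is sound as far as it goes. The genuine gap is the final globalization step. $\aleph_1$-saturation of an ultraproduct over a non-principal ultrafilter on $\N$ lets you realize types over \emph{countable} parameter sets only; in your transfinite induction you must, at stage $\alpha$, find a point of $\con$ at prescribed distances from all previously placed points, and once $\alpha\geq\omega_1$ this is a type over uncountably many parameters, which countable saturation does not realize. This is not a removable technicality, because $\lim^\omega_e C_i$ is in general neither countable nor separable: take $X$ the Cayley graph of $F_2$, so that $\con$ is an $\mathbb R$-tree with continuum branching, and let $C_i$ consist of $2^i$ points at pairwise distance $1$ in the ball of radius $\frac12$ about $e$; then $\lim^\omega_e C_i$ is the set-theoretic ultraproduct $\prod_\omega C_i$, a $1$-separated set of cardinality $2^{\aleph_0}$. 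Under CH your induction would go through (every proper initial segment of a well-ordering of type $\omega_1$ is countable), but the proposition is asserted unconditionally, and without CH an ultraproduct over $\N$ need not be $(2^{\aleph_0})^{+}$-saturated. As written, your argument only yields isometric embeddings of countable subsets of $\lim^\omega_e C_i$ -- which already follows from countable saturation without your finite-subset analysis.

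The paper takes a different and lighter route at exactly the point where you detour: it never passes through finite subsets or saturation, but defines one explicit global map $\tilde\iota\bigl((c_n)\bigr)=\bigl(\iota_n^{m_n}(c_n)\bigr)$ in which the $n$-th coordinate of the image is extracted from the $n$-th entry $c_n\in C_n$ of the representing sequence itself. Because the coordinate index and the sequence index coincide, the two ultralimits you identify as the ``main obstacle'' are taken over the same index $n$, the pushforward $\iota_*\omega$ never enters, and the entire isometry check reduces to the single two-sided estimate $\d(c_n,c_n')-\frac{1}{m_n}\leq \d\bigl(\iota_n^{m_n}(c_n),\iota_n^{m_n}(c_n')\bigr)/d_n\leq \d(c_n,c_n')+\frac{1}{m_n}$ on the $\omega$-large set where $m_n$ is large. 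If you want to repair your write-up, the fix is to redefine $\Phi$ as a single map on all of $\lim^\omega_e C_i$ whose isometry is verified coordinatewise in this way, rather than as a family of finite partial isometries to be amalgamated afterwards.
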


A set $T\subset X$ is \emph{transversal} in $X$, if the connected components of $T\backslash \{t\}$ are contained in distinct connected components of $X\backslash \{t\}$ for every $t\in T$.  We use \ref{embedd} to show that every asymptotic cone with a cut-point contains isometrically embedded universals $\mathbb R$-trees which are transversal in the cone.  For groups, this result can be derived from \cite{SistoTree}.

Our methodology to prove these results was to understand bounded sets which separate and how they can arise in asymptotic cones of homogenous spaces.  We were able to extend these methods to subsets which are unbounded to show that Gromov's dichotomy holds for HNN-extensions and amalgamated products with nicely embedded associated subgroups.

\begin{thm*}[\ref{HNN-Amalg}]
Suppose that $G$ is an HNN-extension or amalgamated product where the associated subgroups are  proper, quasi-isometrically embedded, prairie groups.  Then every asymptotic cone of $G$ is either simply connected or has uncountable fundamental group.
\end{thm*}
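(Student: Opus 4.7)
The plan is to use the Bass-Serre tree $\mathcal{T}$ of $G$ together with the hypotheses on the associated subgroups to produce, in every asymptotic cone of $G$, a tree-graded-like decomposition whose pieces are cones of vertex cosets and whose gluing loci are simply connected cones of associated-subgroup cosets. A van Kampen argument on this decomposition then forces the dichotomy.

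Fix an asymptotic cone $X$ of $G$ with basepoint sequence $(e_n)$ and scaling sequence $(d_n)$. Because each associated subgroup $A$ is quasi-isometrically embedded in $G$, for every sequence of cosets $(g_n A)$ the ultralimit embeds isometrically into $X$ by Proposition \ref{embedd}, and because $A$ is prairie each such embedded copy is simply connected. The same procedure applied to a vertex subgroup produces isometrically embedded vertex-cones, which I will call \emph{pieces}. Rescaling $\mathcal{T}$ by $1/d_n$ and taking an ultralimit yields an $\R$-tree $\mathcal{T}^\omega$, and the natural combinatorial projection $G\to V(\mathcal{T})$ descends to a $1$-Lipschitz map $\pi\colon X \to \mathcal{T}^\omega$ whose vertex-fibres are exactly the pieces and whose edges pull back to the simply connected associated-subgroup cones.

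The main geometric step, and what I expect to be the main obstacle, is to show that two distinct pieces meet only along the associated-subgroup cone dictated by $\mathcal{T}^\omega$, and that any path in $X$ between points in different pieces can be homotoped to pass through the intervening edge cones. This should follow from a Morse-type argument on geodesics in $G$ that tracks passages through cosets of the associated subgroup and uses their quasi-isometric embedding to make the coset-projection into $\mathcal{T}^\omega$ continuous and $1$-Lipschitz in the ultralimit. Once this is established, collapsing each edge-cone to a point leaves $X$ homotopy equivalent to a genuinely tree-graded space $X'$ with pieces $\{P_v\}_{v\in V(\mathcal{T}^\omega)}$, so that a van Kampen argument yields $\pi_1(X) \cong \pi_1(X') \cong \ast_v \pi_1(P_v)$.

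To conclude the dichotomy: if every $\pi_1(P_v)$ is trivial then $X$ is simply connected, and we are done. Otherwise pick a nontrivial loop $\gamma$ in some piece $P_{v_0}$. In the HNN case, translating the defining sequence of $v_0$ by $t^{k_n}$ for $\omega$-divergent integer sequences $(k_n)$ produces uncountably many pairwise distinct vertices of $\mathcal{T}^\omega$ each carrying an isometric copy of $\gamma$; in the amalgam case, reduced alternating words of $\omega$-divergent syllable length play the same role. The free-product decomposition of $\pi_1(X')$ then guarantees that the resulting loop classes are freely independent, so they span an uncountably generated free subgroup of $\pi_1(X)$, completing the proof.
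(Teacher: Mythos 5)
There is a genuine gap, and it sits exactly where you flagged ``the main obstacle'' --- but the problem is worse than a missing Morse lemma. The step $\pi_1(X)\cong \ast_v \pi_1(P_v)$ via van Kampen is not available in this setting. Asymptotic cones are in general not semilocally simply connected, the ``pieces'' you describe are closed sets intersecting along non-degenerate edge-cones rather than an open cover, and there are uncountably many of them; no version of the Seifert--van Kampen theorem applies. Indeed, the paper itself records (in the proposition following Lemma \ref{cut1}) that for a decomposition $X=U\cup V$ with $U\cap V$ closed, simply connected, uniformly locally path connected and uniformly locally simply connected, one only gets an \emph{injection} $\pi_1(U,x_0)\ast\pi_1(V,x_0)\hookrightarrow \pi_1(X,x_0)$, and that the complement of the image is typically uncountable. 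So your free-product decomposition is false in general, which breaks both halves of your dichotomy: the ``all pieces simply connected $\Rightarrow X$ simply connected'' direction fails because an essential loop of $X$ need not be homotopic into a piece, and the ``freely independent translates'' conclusion has no free product to live in. A secondary issue: your tree-graded structure presumes the vertex-coset cones embed and fibre over an $\R$-tree, but the hypotheses only control the \emph{associated} (edge) subgroups --- nothing is assumed about the distortion of the vertex groups, and the homotopy equivalence obtained by ``collapsing each edge-cone to a point'' is unjustified without cofibration-type control that is unavailable in a cone.

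The paper's proof needs none of this global structure. It takes an essential loop $\gamma=(\gamma_n)$, uses Lemmas \ref{etranslates} and \ref{atranslates} to produce, for each $n$, a set $S_n$ of $\approx \d iam(\gamma_n)$ translating elements whose translates of $\gamma_n$ are pairwise $H$-separated by a coset of an associated subgroup, and then observes that for distinct $g,h\in\prod^\omega S_n$ the loops $g\cdot\gamma$ and $h\cdot\gamma$ lie in distinct components of the cone minus $\lim^\omega k_nH$. Quasi-isometric embedding makes this limit bi-Lipschitz to a cone of $H$, hence (prairie) simply connected, uniformly locally simply connected and uniformly locally path connected; Lemma \ref{cut1} then performs surgery on any annulus homotopy (via the Phragm\'en--Brouwer property and a dyadic-square filling) to show that if such a homotopy existed, $\gamma$ would be null-homotopic. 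This yields uncountably many pairwise non-homotopic loops directly, with no decomposition of $\pi_1$ of the whole cone. If you want to salvage your approach, the honest target is not a free product decomposition but precisely this kind of ``separating simply connected set kills homotopies across it'' statement, which is what Lemma \ref{cut1} supplies.
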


Recently, work of Dani Wise has brought to light the usefulness of considering groups which have a quasi-convex hierarchy.

Another weaker version of Gromov's dichotomy holds for multiple HNN extensions of free groups:

\begin{thm*}[\ref{HNN-free_associated}]
If $G$ is a multiple HNN extensions of a free group, then every asymptotic cone of $G$ is simply connected or $G$ has an asymptotic cone with uncountable fundamental group.
\end{thm*}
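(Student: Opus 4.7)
The plan is to split according to whether every associated subgroup of every HNN factor is a \emph{proper} subgroup of the free vertex group $F$. If so, the hypotheses of Theorem \ref{HNN-Amalg} are satisfied: $F$ is free and hence hyperbolic and prairie, and every finitely generated subgroup of a free group is itself free (hence prairie) and quasi-isometrically embedded, by Stallings foldings. Theorem \ref{HNN-Amalg} then yields the full Gromov dichotomy for $G$---every cone is simply connected or has uncountable fundamental group---which is stronger than what the current theorem requires.

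The remaining case is that some HNN factor has an associated subgroup equal to all of $F$: there is a stable letter $t$ with $tFt^{-1}=K\leq F$. The task is to exhibit a single asymptotic cone of $G$ with uncountable fundamental group. Two situations arise: either $K\subsetneq F$ (so conjugation by $t$ is a strictly contracting endomorphism), or $K=F$ (so $G$ is a free-by-cyclic mapping torus $F\rtimes_\phi\mathbb{Z}$). In the contracting situation, iterating the chain $F\supsetneq K\supsetneq tKt^{-1}\supsetneq\cdots$ forces exponential distortion of $F$ inside $G$, yielding an explicit Baumslag--Solitar-like relation $txt^{-1}=w$ with $|w|\ll|x|$; the resulting $BS(1,m)$-like subgroup has asymptotic cones containing Hawaiian earrings (cf.\ \cite{Burillo}), and these lift isometrically to a cone of $G$ by Proposition \ref{embedd}, producing uncountable $\pi_1$. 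In the mapping-torus situation, provided the automorphism $\phi$ has exponential growth, the same strategy works, with iterated $\phi$-images of a suitably chosen element of $F$ playing the role of the descending chain.

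The main obstacle is the remaining case $K=F$ with $\phi$ of purely polynomial growth. Here $G$ contains no Baumslag--Solitar subgroup, so one cannot manufacture a cone with uncountable $\pi_1$; the claim must instead fall on the other horn of the dichotomy---every asymptotic cone of $G$ must be shown to be simply connected. Verifying this directly is the technical heart of the proof: one natural strategy is to retract loops in a cone of $G$ onto the $\mathbb{R}$-tree direction supplied by the $\mathbb{Z}$-factor and then null-homotope the residual $F$-projections inside the tree cone of $F$, using the polynomial bound on $\phi$ to control the retraction. Making this rigorous, and verifying that no large-scale loop evades the retraction, is the step I expect to be most delicate; this is also precisely the obstruction that forces the statement to be the weak dichotomy rather than the strong per-cone dichotomy of Theorem \ref{HNN-Amalg}.
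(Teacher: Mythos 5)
There are several genuine gaps here, and the overall decomposition does not match how the difficulty actually distributes itself. The most serious problem is in your first case: to invoke \ref{HNN-Amalg} you need the associated subgroups to be quasi-isometrically embedded \emph{in $G$}, not in the free base group $F$. Stallings foldings give undistortedness in $F$ only; a finitely generated subgroup of $F$ that is proper and undistorted in $F$ can be exponentially (or worse) distorted in $G$ --- already $BS(2,3)=\pres{a,t}{(a^2)^t=a^3}$ has both associated subgroups proper and undistorted in $\langle a\rangle$ yet exponentially distorted in $G$. (Corollary \ref{HNN-Amalg2} catches exponential distortion, but not arbitrary distortion, and neither result is stated for \emph{multiple} HNN extensions.) This is precisely why the paper does not route the proof through \ref{HNN-Amalg} at all, but instead develops the entire Section \ref{section HNN} machinery: $t$-shortest partitions of van Kampen diagrams, the $t$-separation of translates in \ref{translates2}, the existence of a separating $t$-annulus (\ref{medians2}), and the surgery of \ref{removing} that removes such an annulus while controlling the mesh and number of pieces of a partition. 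Your second and third cases also contain false claims: a strictly ascending HNN extension of a free group need not contain a $BS(1,m)$-like subgroup (word-hyperbolic examples exist, and their cones are $\mathbb R$-trees), and \ref{embedd} only embeds limits of finite point sets isometrically --- it gives no $\pi_1$-injectivity, so even a genuine Baumslag--Solitar subgroup would not immediately transport uncountable fundamental group into a cone of $G$. Likewise, in the mapping-torus case your assertion that exponentially growing $\phi$ yields a cone with uncountable $\pi_1$ is contradicted by hyperbolic free-by-cyclic groups.

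The case you flag as the ``technical heart'' ($K=F$ with $\phi$ polynomially growing) in fact dissolves: by Bridson--Groves \cite{BrGr}, \emph{every} free-by-cyclic group $F\rtimes_\phi\mathbb Z$ has quadratic Dehn function, hence all of its cones are simply connected by Papasoglu's theorem, with no growth hypothesis on $\phi$ and no retraction argument needed. This is exactly the paper's first (and only easy) case: one stable letter with both associated subgroups equal to $F$. In every other configuration --- some $U_i$ or $V_i$ proper, or at least two stable letters --- \ref{translates2} produces, for any essential loop $\gamma$ in a non-simply-connected cone, uncountably many pairwise $t$-separated translates $g\cdot\gamma$; a homotopy between two of them would give an annular diagram crossed by a separating $t$-annulus, which \ref{removing} converts into a bounded-mesh partition of $\gamma_n$ with boundedly many pieces, contradicting the choice of $\gamma_n$ as loops admitting no such partitions. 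Your proposal contains none of this combinatorial core, and the cases into which you do decompose the problem either rest on a hypothesis that fails (quasi-isometric embedding in $G$) or on conclusions that are false for known examples.
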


Multiple HNN extensions of free groups can have unusual asymptotic properties.  Burillo in \cite{Burillo} showed that asymptotic cones of HNN extensions of free groups (Baumslag-Soliar groups) can have $\pi_1$-embedded Hawaiian earring groups.  Olshanskii and Sapir in \cite{OS2} construct a multiple HNN extension of a free group which has $\pi_1$-non-equivalent asymptotic cones.

Note that when Gromov's dichotomy was formulated, examples  of groups with several non-homeomorphic (or moreover $\pi_1$-non-equivalent) asymptotic cones were not known. Now we know that a finitely generated group can have uncountably many pairwise $\pi_1$-non-equivalent asymptotic cones \cite{DS} (or much more pairwise non-homeomorphic cones, if the Continuum Hypothesis is assumed false \cite{KSTT}), so our weaker version of the dichotomy seems natural.

\subsection{Definitions}

Let $G=\langle S\rangle$ be a group and $u,v$ be two words in the alphabet $S$. We write $u\equiv v$ when $u$ and $v$ coincide letter by letter and $u=_G v$ if $u$ and $v$ are equal in $G$.  We will denote the Cayley graph of $G$ with respect to the generating set $S$ by $\Gamma(G,S)$.  We will use the standard convention of considering $G$ as the set of vertices of the Cayley graph which acts on $\Gamma(G,S)$ by isometries. The Cayley complex of $G$ depends on a set of generators and a set of relations but will be denoted $\Gamma^2(G,S)$ when the relators are understood. We will use $\operatorname{\textbf{Lab}}$ to represent the function from the set of edge paths in a labeled oriented CW complex to the set of words in the alphabet obtained by reading the label of a path.

We will use $\theta$ to denote the canonical map taking a van Kampen diagram into the Cayley complex which restricts to a label preserving map on the $1$-skeleton of the diagram.  Explicitly, let $\Delta$ be a van Kampen diagram with a distinguished vertex $o$ and $g_o$ a vertex of $\Gamma(G,S)$.  For $v$ a vertex of $\Delta$, let $\theta(v)= g_0w_v$ where $w_v$ is the label of any path in $\Delta$ from $o$ to $v$.   Whenever $\Delta$ is a simply connected diagram, this map is independent of the choice of $w_v$ and extends to a map on all of $\Delta$ as follows.  For $e$ an edge of $\Delta$ labeled by $s$ with initial vertex $v$, let $\theta(e)= (\theta(v),s)$ where $\bigl(\theta(v),s\bigr)$ is the edge in $\Gamma(G,S)$ with initial vertex $\theta(v)$ and labeled by $s$.  For $\pi$ a 2-cell of $\Delta$,  we may choose a vertex $v$ on $\partial \pi$ such that $\lab {\pi} \equiv r^{\pm 1}$  with this choose of base point.  Then $\theta(\pi)= D_{\theta(v), r^{\pm 1}}$ where $D_{\theta(v), r^{\pm 1}}$ is the two cell in $\Gamma^2(G,S)$ with boundary, read from $\theta(v)$, labeled by $r^{\pm1}$.  The map $\theta$ is unique up to our choice of $g_0$ and $o$.

\begin{defn}  Suppose that $\beta$ is a simple closed curve contained in the interior of a planar disc $D$.  Then $D\backslash \beta$ has exactly two components.  The component of $D\backslash \beta$ whose closure contains $\partial D$ will be called the unbounded component of $D\backslash \beta$.  The other component will be called the bounded component.  A point $v\in D$ is \emph{interior (or exterior)} to $\beta$, if it is contained in the bounded (or unbounded) component of $D\backslash \beta$.
\end{defn}

\begin{defn}[Asymptotic cones]
Let $\omega$ be an ultrafilter on $\N$ and  $c_n$ be a sequence of positive real numbers.  The sequence $c_n$ is \emph{bounded $\omega$-almost surely} or \emph{$\omega$-bounded}, if there exists a number $M$ such that $\omega\bigl(\{n \ | \ c_n< M\}\bigr) = 1$.  If $c_n$ is $\omega$-bounded, then there exists a unique number, which we will denote by $\lim^\omega c_n$, such that  $\omega\bigl(\{ n \  | \ |c_n- lim^\omega c_n|< \epsilon\}\bigr) = 1$ for every $\epsilon>0$.

If $c_n$ is not $\omega$-bounded, then $\omega\bigl(\{n \ | \ c_n> M\}\bigr) = 1$ for every $M$.  We will say that \emph{$c_n$ diverges $\omega$-almost surely} or is \emph{$\omega$-divergent} and let $\lim^\omega c_n = \infty$.

Let $(X_n,\d_n)$ be a sequence of metric spaces and $\omega$ an ultrafilter on $\N$.  Consider a sequence of points $e=(e_n)$ such that $e_n \in X_n$ called an \emph{observation sequence}.

Given two elements $x = (x_n), y = (y_n)\in \prod X_n$, set $\d(x,y) = \lim{}^\omega\d_n(x_n,y_n)$.  We can then define an equivalence relation $\sim$ on $\prod X_n$  by  $x\sim y$ if and only if $\d(x,y)= 0$.

The \emph{ultralimit} of $ X_n$ relative to the observation sequence $e$ is
$$\lim{}^\omega_e X_n =\Bigl\{x= (x_n)\in\prod X_n \ | \  \ \d(x,e)<\infty\Bigr\}/\sim.$$

Now consider an $\omega$-divergent sequence of numbers $d = (d_n)$ called a \emph{scaling sequence} and a metric space $(X, \d)$.

The asymptotic cone of $X$ with respect to $e$, $d$, and $\omega$ is

$$ \operatorname{Con}^\omega\bigl(X,e,d\bigr) = \lim{}^\omega_e \bigr(X, \d/d_n\bigr)$$
where $\d/d_n$ is the metric on $X$ scaled by $\frac{1}{d_n}$.

$\operatorname{Con}^\omega\bigl(X,e,d\bigr)$ is a complete metric space. If $X$ is geodesic, then $\operatorname{Con}^\omega\bigl(X,e,d\bigr)$ is also geodesic.  If $X$ is a homogeneous metric space, then the isometry type of $\con$ is independent of $e$ and will frequently  be denoted by $\operatorname{Con}^\omega\bigl(X,d \bigr)$.

Suppose that $\{X_n\}$ is a sequence of subsets of a metric space $(X, \d)$.  At times it will be convenient to talk about the subset of $\con$ with representatives in $\prod X_n$.  When it is clear from the text, we will denote this subset by $\lim^\omega X_n$ instead of, the more precise, $\lim_e^\omega (X_n, \d/d_n)$.  When used in this context, we will not require that $e_n$ be an element of $X_n$.

\end{defn}

\section{Wide groups and ends of asymptotic cones}\label{wide_section}

The following lemma is obvious.

\begin{lem}
Let $\omega$ be an ultrafilter on $\N$ and $d=(d_n)$ a scaling sequence. Suppose that $\{\gamma_n\}$ is a sequence of loops parameterized by arc length in a geodesic metric space $(X, \d)$ such that $|\gamma_n| = O(d_n)$. Then $\gamma(t) = \bigl(\gamma_n(t)\bigr)$ is a continuous map of $S^1$ into $\con$.
\end{lem}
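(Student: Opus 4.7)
The plan is to realize $\gamma$ as the ultralimit of a uniformly Lipschitz family of loops. First I would reparameterize each $\gamma_n$ at constant speed on a common circle $S^1 = \R/\Z$, for instance by setting $\tilde\gamma_n(t) = \gamma_n(t|\gamma_n|)$. Since $\gamma_n$ was parameterized by arc length, $\tilde\gamma_n$ is $|\gamma_n|$-Lipschitz. The hypothesis $|\gamma_n| = O(d_n)$ gives a constant $C$ with $|\gamma_n| \leq C d_n$ for $\omega$-almost every $n$, so in the rescaled metric $\d/d_n$ the loops $\tilde\gamma_n$ are uniformly $C$-Lipschitz.

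Next I would check that $\gamma(t) = (\tilde\gamma_n(t))$ actually represents a point of $\con$ for each $t \in S^1$. This is automatic once the basepoint sequence $(\tilde\gamma_n(0))$ is known to sit at bounded rescaled distance from the observation sequence $e_n$ (which is the implicit standing hypothesis when we speak of $\gamma$ as a map into the cone). Indeed, $\d(\tilde\gamma_n(t), e_n) \leq \d(\tilde\gamma_n(0), e_n) + |\gamma_n|$, and both summands are $O(d_n)$, so $(\tilde\gamma_n(t))$ lies in $\con$.

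Finally, for any $t_1, t_2 \in S^1$, the Lipschitz bound gives $\d(\tilde\gamma_n(t_1), \tilde\gamma_n(t_2)) \leq |\gamma_n| \cdot |t_1-t_2| \leq C d_n |t_1-t_2|$, where $|t_1 - t_2|$ denotes the $S^1$-distance. Dividing by $d_n$ and taking the $\omega$-limit yields $\d(\gamma(t_1), \gamma(t_2)) \leq C |t_1-t_2|$, so $\gamma$ is $C$-Lipschitz and therefore continuous. The lemma is labeled obvious precisely because rescaling the parameter together with the metric converts the arc-length hypothesis into a uniform Lipschitz bound that passes to the ultralimit; the only mild subtlety, and the step I would be most careful about, is setting up a common domain $S^1$ for the variable-length loops so that the formula $\gamma(t) = (\gamma_n(t))$ is genuinely well defined.
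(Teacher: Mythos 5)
Your proof is correct, and since the paper declares this lemma ``obvious'' and omits any argument, your write-up simply supplies the standard reasoning the author intends: the arc-length hypothesis plus $|\gamma_n| = O(d_n)$ makes the constant-speed reparameterizations uniformly Lipschitz in the rescaled metrics, and that bound passes to the ultralimit. Your attention to fixing a common domain $S^1$ and to well-definedness of $\gamma(t)$ as a point of $\con$ addresses exactly the (mild) gaps left by the paper's omission.
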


The converse also holds.

\begin{lem}\label{limitloop}\ulabel{limitloop}{Lemma}
Suppose that $\gamma$ is a path in $\con$ where $X$ is a geodesic metric space.  Then there exist paths $\gamma_n$ in $X$ such that $\gamma(t) = \bigl(\gamma_n(t)\bigr)$.
\end{lem}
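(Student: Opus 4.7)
The plan is to construct each $\gamma_n$ as a piecewise geodesic in $X$ through chosen representatives of $\gamma$ at a dyadic partition of $[0,1]$ whose fineness grows $\omega$-almost surely. Since $\gamma$ is continuous on the compact interval $[0,1]$ it is uniformly continuous, so setting $\epsilon_m=\sup\{\d(\gamma(s),\gamma(t)):|s-t|\leq 2^{-m}\}$ gives $\epsilon_m\to 0$. Write $D_m=\{k/2^m:0\leq k\leq 2^m\}$ and fix, once and for all, a representative $(x_n(q))_n\in\prod X$ of $\gamma(q)$ for each $q\in\bigcup_m D_m$. Since $X$ is geodesic, for every $n$ and $m$ there is a piecewise-geodesic path $\gamma_n^m\co[0,1]\to X$ which equals $x_n(k/2^m)$ at $k/2^m$ and is a geodesic segment on each subinterval of $D_m$.

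The key observation is that for fixed $m$ the sequence $(\gamma_n^m(t))_n$ represents, in $\con$, the piecewise geodesic $\delta^m(t)$ through the points $\gamma(k/2^m)$, since ultralimits of geodesics in $X$ between the chosen representatives are geodesics in $\con$ between the corresponding cone points. Uniform continuity gives $\sup_t \d(\delta^m(t),\gamma(t))\leq \epsilon_m$, so $\delta^m\to\gamma$ uniformly.

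To extract a single sequence, enumerate the countably many triples $(m,K,q)$ with $K\leq m$ and $q\in D_K$; for each such triple the finite partial sum $\sum_{\ell=0}^{2^{m-K}-1}\d(x_n(q+\ell/2^m),x_n(q+(\ell+1)/2^m))/d_n$ has an $\omega$-limit equal to the piecewise-geodesic length of $\delta^m$ on the corresponding $D_K$-subinterval, so the set $G_{m,K,q}$ of indices on which the partial sum is within $1/m$ of that limit is $\omega$-large. Letting $m(n)=\max\{j\leq n : n\in\bigcap_{(m,K,q):m\leq j}G_{m,K,q}\}$ yields $m(n)\to\infty$ $\omega$-almost surely, and I set $\gamma_n=\gamma_n^{m(n)}$, which is automatically continuous since it is piecewise geodesic.

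To verify $(\gamma_n(t))=\gamma(t)$ for every $t\in[0,1]$, fix $K$ and let $q\in D_K$ be the nearest dyadic to $t$. For $n$ with $m(n)\geq K$, the path $\gamma_n$ passes through $x_n(q)$ at time $q$ because $D_K\subset D_{m(n)}$, and $\d(\gamma_n(t),x_n(q))/d_n$ is bounded by the length of $\gamma_n$ over the $D_K$-subinterval containing $t$, which by the $G_{m(n),K,q}$-condition is within $1/m(n)$ of the piecewise-geodesic $\con$-length of $\delta^{m(n)}$ on that subinterval, which in turn is controlled by uniform continuity. Combining with $\d(\gamma(q),\gamma(t))\leq\epsilon_K$ via the triangle inequality in $\con$ gives $\d((\gamma_n(t)),\gamma(t))\to 0$ as $K\to\infty$, forcing equality. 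The main obstacle is precisely this diagonal step: an ultrafilter directly controls only countably many sequences, yet we need $(\gamma_n(t))=\gamma(t)$ for every $t$ in an uncountable set, and the trick is to reduce the problem to a countable family of partial-sum estimates indexed by $(m,K,q)$ so that a single diagonal enforces uniform control across all parameter values.
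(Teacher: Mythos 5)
Your construction---piecewise geodesics through fixed representatives at dyadic nodes---is the same skeleton as the paper's, but your final verification contains a genuine gap. You bound $\d\bigl(\gamma_n(t),x_n(q)\bigr)/d_n$ by the length of $\gamma_n$ over the $D_K$-subinterval containing $t$, and then assert that the corresponding piecewise-geodesic length of $\delta^{m(n)}$ on that subinterval ``is controlled by uniform continuity.'' It is not: that length is $\sum_{\ell=0}^{2^{m-K}-1}\d\bigl(\gamma(q+\ell/2^m),\gamma(q+(\ell+1)/2^m)\bigr)$, a sum of $2^{m-K}$ terms each bounded by $\epsilon_m$; uniform continuity controls the individual terms but not their sum, which increases with $m$ toward the arc length of $\gamma$ over $[q,q+2^{-K}]$ and need not be finite at all (consider a non-rectifiable path such as $t\mapsto t\sin(1/t)$ inside a geodesic line in the cone). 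Since the lemma must handle arbitrary continuous, possibly non-rectifiable paths, your chain of estimates does not force $\d\bigl((\gamma_n(t)),\gamma(t)\bigr)\to 0$ as $K\to\infty$, and the sets $G_{m,K,q}$ you diagonalize over control the wrong quantities.

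The repair is to never sum over segments: $\gamma_n(t)=\gamma_n^{m(n)}(t)$ lies on a \emph{single} geodesic segment of $\gamma_n^{m(n)}$, so if $r$ is the nearest level-$m(n)$ dyadic to $t$, then $\d\bigl(\gamma_n(t),x_n(r)\bigr)\leq\d\bigl(x_n(r),x_n(r')\bigr)$ for consecutive nodes $r,r'$, which is at most $\bigl(\epsilon_{m(n)}+1/m(n)\bigr)d_n$ once you diagonalize over the countably many individual consecutive-pair distances rather than over partial sums; one then compares $x_n(r)$ with $x_n(q)$ directly. The paper in fact avoids the diagonal altogether: it takes the grid level equal to the index $n$, uses exactly this single-segment estimate to show that the limit $\gamma'(t)=\bigl(\gamma_n(t)\bigr)$ is a continuous path, and concludes $\gamma'=\gamma$ because two continuous maps agreeing on the dense set of dyadic rationals are equal. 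That last observation also dissolves the ``uncountably many $t$'' obstacle you flagged, so the diagonal machinery is unnecessary in the first place.
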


Recall that there exists geodesics in a cone which are not limits of geodesics.  However, here we do not put any restraints on the paths $\gamma_n$ (the proof shows that $\gamma_n$ can be chosen to be a $2^n$-gon).

\begin{proof}
Suppose that $\gamma\co [0,1]\to \con$ is a path and let $\xi$ be a modulus of continuity for $\gamma$.

For each diadic rational $r$, fix a representative $\bigl(a_n(r)\bigr)$ of $\gamma(r)$. Define $\gamma_n\co [0,1] \to X$ by

$$\gamma_n(r)= a_n(r) \text{ for } r\in \Bigl\{ 0, \frac {1}{2^n}, \cdots , \frac{2^n -1}{2^n}, 1\Bigr\}$$

and extend $\gamma_n$ geodesically.

We can then define $\gamma'(t) = \bigl(\gamma_n(t)\bigr)$  and notice that $\gamma'(t) = \gamma(t)$ on the diadic rationales.  Suppose that $x_i\in [0,1]$ and $x_i$ converges to $x_0$.  For each $i$, let $r_i, s_i$ be closest points of $\bigl\{ 0, \frac {1}{2^i}, \cdots , \frac{2^i -1}{2^i}, 1\bigr\}$ to $x_i, x_0$ respectively.  Then $\d\bigl(\gamma_n(x_i), \gamma_n(r_i)\bigr), \d\bigl(\gamma_n(x_0), \gamma_n(s_i)\bigr)\leq \xi(\frac{1}{2^i})d_n + o(d_n)$.  Hence $\d\bigl(\gamma'(x_i), \gamma'(r_i)\bigr), \d\bigl(\gamma'(x_0), \gamma'(s_i)\bigr) \leq \xi(\frac{1}{2^i})$.

For each $i$,
\begin{align*}
\d\bigl(\gamma'(x_i), \gamma'(x_0)\bigl) & \leq \d\bigl(\gamma'(x_i), \gamma'(r_i)\bigr) +\d\bigl(\gamma'(r_i), \gamma'(s_i)\bigr) + \d\bigl(\gamma'(s_i), \gamma'(x_0)\bigr) \\ &\leq 2\xi\bigl({1}/{2^i}\bigr) + \xi\bigr(\d(r_i, s_i)\bigl).
\end{align*}
Since $\d(r_i,s_i)\leq \frac{1}{2^{i-1}} + \d(x_i, x_0)$, this implies that $\gamma'$ is continuous.  Thus $\gamma = \gamma'$.

\end{proof}

\begin{lem}\label{limitdisc}\ulabel{limitdisc}{Lemma}
Suppose that $h\co A\to \con$ is a continuous map where $A$ is any compact planar Peano continuum and $X$ is a simply connected geodesic metric space which satisfies a linear isodiametric function.  Then there exist continuous maps $h_n\co A \to X$ such that $\gamma(t) = \bigl(\gamma_n(t)\bigr)$.
\end{lem}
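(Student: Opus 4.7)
The plan is to extend the one-dimensional construction of Lemma~\ref{limitloop} to two dimensions, replacing geodesic fillings of edges with singular disc fillings of $2$-cells via the linear isodiametric inequality. Embed the compact planar Peano continuum $A$ in $[0,1]^2$, and for each $k\in\N$ let $T_k$ denote the standard dyadic triangulation of $[0,1]^2$ of mesh $2^{-k}$, with $T_{k+1}$ subdividing $T_k$. Let $A_k$ be the closed subcomplex of $T_k$ consisting of those closed simplices that meet $A$, so $A_k\supseteq A_{k+1}\supseteq\cdots$ and $\bigcap_k A_k=A$. For every vertex $v$ appearing in some $A_k$, fix a closest point $\pi(v)\in A$ (so $\d(v,\pi(v))=O(2^{-k})$) and a representative $(x_{v,n})_{n\in\N}$ of $h(\pi(v))\in\con$.

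For each level $k$ and index $n$ I would then build a continuous map $\tilde h_n^{(k)}\co A_k\to X$ in three stages, the first two mirroring the proof of Lemma~\ref{limitloop}. Set $\tilde h_n^{(k)}(v):=x_{v,n}$ on the $0$-skeleton; extend by a geodesic of $X$ on each $1$-cell; and on each $2$-simplex $\sigma$ of $A_k$, observe that the three geodesics along $\partial\sigma$ form a loop in $X$, which the simple connectivity of $X$ together with the linear isodiametric inequality allows us to fill by a continuous singular disc in $X$ of diameter at most $C$ times the loop's length. Let $\xi$ be a modulus of uniform continuity for $h$ on the compact set $A$. Adjacent vertices of $T_k$ correspond via $\pi$ to points of $A$ within $O(2^{-k})$ of each other, so the cone-distance between their $h$-values is at most $\xi(O(2^{-k}))$, and hence the $X$-distance between their chosen representatives is $\xi(O(2^{-k}))\,d_n+o(d_n)$ $\omega$-almost surely. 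Consequently the boundary loop of each $2$-cell has length $O\bigl(\xi(O(2^{-k}))\,d_n\bigr)$, and its filling disc has diameter of the same order.

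Finally I would set $h_n:=\tilde h_n^{(k_n)}|_A$ for a suitable diagonal sequence $k_n\to\infty$. Continuity of $h_n$ is inherited from the continuous map $\tilde h_n^{(k_n)}$ on the polyhedron $A_{k_n}\supseteq A$. For $a\in A$, choose a $2$-cell $\sigma$ of $T_{k_n}$ containing $a$ and any vertex $v$ of $\sigma$; then $h_n(a)$ lies inside the filling disc for $\sigma$, which has diameter $O\bigl(\xi(O(2^{-k_n}))\,d_n\bigr)$ and contains $x_{v,n}$. Thus $\d\bigl(h_n(a),x_{v,n}\bigr)/d_n\to 0$ once $k_n\to\infty$, and since $\pi(v)\to a$ together with continuity of $h$ a standard diagonal estimate identifies $\lim{}^\omega(x_{v,n})$ with $h(a)$, whence $\lim{}^\omega(h_n(a))=h(a)$ in the cone. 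I expect the main obstacle to be this last diagonal step: the sequence $k_n$ must be chosen slowly enough that the approximating representatives represent $h(a)$ simultaneously for \emph{every} $a\in A$, which requires the compactness of $A$ and uniform continuity of $h$ to pass from pointwise to uniform control. The Peano continuum hypothesis keeps the approximating complexes $A_k$ well-behaved at every scale, while the linear isodiametric function is what permits the filling discs to vanish in the cone as $k_n\to\infty$.
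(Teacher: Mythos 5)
Your proposal is correct and follows essentially the route the paper intends: the paper's one-line proof simply says to repeat the argument of \ref{limitloop} using the dyadic covering of \ref{opensquares}, and your construction (representatives on a dyadic $0$-skeleton, geodesics on edges, isodiametric fillings of the $2$-cells, then a diagonal limit) is exactly that, with the use of the linear isodiametric hypothesis made explicit. The diagonal step you flag at the end is indeed where the remaining work lies, and it is handled by the same $\omega$-large-set bookkeeping the paper uses in \ref{embedd}.
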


The proof is essentially the same as for the previous lemma and uses the covering from \ref{opensquares}.

Throughout this paper, we will assume that metric balls are open.  When $\tau$ is a path in a metric space, we will use $|\tau|$ to denote its arc length. Then $|\cdot|$ maps the set of paths into the extended real line and is finite for rectifiable paths and $+\infty$ for non-rectifiable paths. We will assume that rectifiable paths are parameterized proportional to arc length.  We will use $\mathcal N_s(B)$ to represent the $s$-neighborhood of $B$.

\begin{lem}\label{unbounded}\ulabel{unbounded}{Lemma}
Let $X$ be a homogeneous geodesic metric space.  The non-empty components $\con\backslash\bigl\{(x_n)\bigr\}$ are unbounded for all $(x_n)\in\con$.
\end{lem}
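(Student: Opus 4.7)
Proof plan. The plan is to show that every non-empty component $C$ of $\con\setminus\{p\}$ contains points arbitrarily far from $p$, by extending the geodesic from $p$ to any point $q\in C$ into a longer path that stays off $p$.

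First I would dispose of the degenerate case: if $X$ is bounded then $\con$ is a single point and the statement is vacuous, so I assume $X$ is unbounded. Fix $p=(x_n)$, $C$ a non-empty component of $\con\setminus\{p\}$, and $q=(y_n)\in C$ with $\delta=d(p,q)>0$.

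For each $R>0$ I produce a continuous path $\sigma\colon[0,1]\to\con$ with $\sigma(0)=q$, $d(p,\sigma(t))\geq\delta$ for all $t$, and $d(p,\sigma(1))\geq R$; since the image is a connected subset of $\con\setminus\{p\}$ meeting $C$, it lies in $C$, giving a far point $\sigma(1)\in C$. The natural $\sigma_n$ in $X$ is the tail past $y_n$ of a geodesic $\tilde\alpha_n\colon[0,(\delta+R)d_n]\to X$ that starts at $x_n$ and passes through $y_n$ at parameter $\delta d_n$; on this tail every point is at $X$-distance at least $d_X(x_n,y_n)\sim\delta d_n$ from $x_n$, because $y_n$ lies on a geodesic from $x_n$ to the tail's endpoint, so the cone-image $\sigma$ stays at distance at least $\delta$ from $p$ and terminates at distance $\delta+R$ from $p$.

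The main obstacle is the existence of $\tilde\alpha_n$: extending a geodesic past its endpoint is not automatic in a general geodesic metric space. Here I would leverage the twin hypotheses on $X$: unboundedness produces geodesics of arbitrary length from $x_n$, and homogeneity lets me replace such a geodesic by its image under an isometry of $X$ chosen to send its intermediate point at parameter $\delta d_n$ to $y_n$. The cost of this isometry is that it usually moves $x_n$ as well, to a point $x_n'$ only bounded in distance $2\delta d_n$ from $x_n$; so one must either perform a compensating adjustment, or verify that the discrepancy is invisible in the ultralimit (for instance by choosing $\tilde\alpha_n$ only approximately straight and observing that $\omega$-almost-sure extensions of $[x_n,y_n]$ suffice). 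This is the delicate step in the argument.

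Once $\tilde\alpha_n$ is in hand, continuity of $\sigma$ is the easy direction of the lemma preceding Lemma \ref{limitloop}, the inequalities $d(p,\sigma(t))\geq\delta$ and $d(p,\sigma(1))=\delta+R$ fall out of the geodesic relations in $X$, and the component-capture argument for $\sigma\subset C$ finishes the proof.
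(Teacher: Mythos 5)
Your argument has a genuine gap, and it is exactly at the step you flag as ``delicate.'' To build $\tilde\alpha_n$ you propose taking a long geodesic in $X$ and applying an isometry that sends its point at parameter $\delta d_n$ to $y_n$. But that isometry carries the initial endpoint of the geodesic to some point $x_n'$ about which you know only that $\d(x_n',y_n)=\delta d_n$; there is no reason for $x_n'$ to be anywhere near $x_n$, and in particular the tail past $y_n$ is far from $x_n'$, not from $x_n$. So the inequality $\d(p,\sigma(t))\geq\delta$ --- the whole point of the construction --- does not follow, and neither of your suggested repairs (``a compensating adjustment,'' or ``the discrepancy is invisible in the ultralimit'') is carried out or obviously true: the discrepancy is of order $d_n$, hence fully visible in the cone. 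As written, the proof does not close.

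The difficulty is self-inflicted: you do not need the escaping path to extend the geodesic $[p,q]$, nor to stay at distance $\geq\delta$ from $p$. You only need an unbounded \emph{connected} subset of $\con$ that contains $q$ and avoids the single point $p$; any such set lies in the component $C$. The paper gets one immediately: since $X$ is unbounded and homogeneous, $\con$ contains a bi-infinite geodesic, and by homogeneity of the cone there is a bi-infinite geodesic $\alpha\co\R\to\con$ with $\alpha(0)=q$. A geodesic line is an isometric embedding, so it meets $p$ at most once, and not at $0$ (as $q\neq p$); hence at least one of the two rays $\alpha((-\infty,0])$, $\alpha([0,\infty))$ misses $p$ entirely. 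That ray is unbounded, connected, contains $q$, and avoids $p$, so $C$ is unbounded. If you want to salvage your representative-level construction, the correct move is the same in spirit: do the geodesic-extension in the cone (where homogeneity is available at the limiting point $q$ itself) rather than trying to extend $[x_n,y_n]$ in $X$.
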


\begin{proof}  The lemma is trivial if $X$ is bounded.  Every asymptotic cone of an unbounded geodesic metric space contains a bi-infinite geodesic.  Hence we may assume $\con$ contains a bi-infinite geodesic through every point.  Suppose that $A$ is a non-empty connected component of $\con\backslash \{x\}$ for some $x\in\con$.  Let $a\in A$.  Then there exists a bi-infinite geodesic $\alpha\co \mathbb R \to \con$ such $\alpha(0) = a$.  Only one of $\alpha\bigl( (-\infty, 0]\bigr)$, $\alpha\bigl([0,\infty)\bigr)$ can intersect $x$.  Hence $A$ must contain an unbounded ray. \end{proof}

This lemma also follows from \cite[Lemma 3.12]{DMS}.

\begin{lem}\label{limitseparates}\ulabel{limitseparates}{Lemma}
Let $B_n$ be a sequence of uniformly bounded subsets of a geodesic metric space $X$ and $\kappa\co\mathbb N\to \mathbb R$ be a sublinear function.  If $X\backslash \mathcal N_{k(d_n)}(B_n)$ has more than one unbounded connected component; then, for $e_n\in B_n$, $\con\backslash \lim^\omega_e B_n$ has more than one unbounded connected component.
\end{lem}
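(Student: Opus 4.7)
Since $B_n$ is uniformly bounded in $X$ and $e_n \in B_n$, every $(b_n) \in \prod B_n$ satisfies $\d(b_n, e_n)/d_n \to 0$, hence $\lim^\omega_e B_n = \{e\}$ in $\con$, where $e = (e_n)$. By Lemma \ref{unbounded} every non-empty component of $\con \backslash \{e\}$ is automatically unbounded, so it suffices to exhibit two points of $\con \backslash \{e\}$ lying in distinct components.

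Fix distinct unbounded components $U_n, V_n$ of $X \backslash \mathcal N_{\kappa(d_n)}(B_n)$, and pick $x_n \in U_n$, $y_n \in V_n$ with $d_n \le \d(x_n, e_n), \d(y_n, e_n) \le 2 d_n$ (possible since each component is unbounded and contains points near $B_n$). Set $x = (x_n)$ and $y = (y_n)$; these are non-trivial points of $\con \backslash \{e\}$. Suppose for contradiction that a path $\gamma\co [0,1] \to \con \backslash \{e\}$ joins $x$ to $y$, and let $\delta := \min_{t\in[0,1]} \d(\gamma(t), e) > 0$, obtained from compactness of the image.

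The main technical obstacle is that $\omega$-almost sure statements can be combined over only finitely many conditions at a time, whereas the desired contradiction requires lifted paths in $X$ to avoid $\mathcal N_{\kappa(d_n)}(B_n)$ uniformly in $t$. Uniform continuity of $\gamma$ resolves this: choose $N$ so large that $\d(\gamma(s),\gamma(t)) < \delta/8$ whenever $|s-t| \le 1/N$, fix representatives $(a_n^k)$ of $\gamma(k/N)$ for $k = 0, \dots, N$, and intersect the resulting finitely many $\omega$-full sets to obtain an $\omega$-full set of indices $n$ on which $\d(a_n^k, e_n) > (3\delta/4) d_n$ and $\d(a_n^k, a_n^{k+1}) < (\delta/4) d_n$ hold simultaneously for every $k$. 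For such $n$, the polygonal path $\eta_n$ in $X$ with vertices $a_n^0,\ldots,a_n^N$ stays at $X$-distance at least $(\delta/2) d_n$ from $e_n$ by applying the triangle inequality along each geodesic edge. Since $B_n$ lies in the $M$-neighborhood of $e_n$ for a uniform constant $M$ and $\kappa(d_n) = o(d_n)$, it follows that $\eta_n$ avoids $\mathcal N_{\kappa(d_n)}(B_n)$ for $\omega$-a.e. $n$.

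To close the contradiction, connect $x_n$ to $a_n^0$ and $y_n$ to $a_n^N$ by geodesic segments of length $o(d_n)$ (possible since $(a_n^0) = x$ and $(a_n^N) = y$ in $\con$). These short geodesics stay within $o(d_n)$ of $x_n$ and $y_n$, each of which is at $X$-distance $\ge d_n$ from $e_n$, and therefore also miss $\mathcal N_{\kappa(d_n)}(B_n)$ for $\omega$-a.e. $n$. Concatenating yields, $\omega$-almost surely, a path from $x_n \in U_n$ to $y_n \in V_n$ lying entirely in $X \backslash \mathcal N_{\kappa(d_n)}(B_n)$, contradicting the assumption that $U_n$ and $V_n$ are distinct components. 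Hence $x$ and $y$ lie in distinct components of $\con \backslash \{e\}$, and by Lemma \ref{unbounded} these components are unbounded.
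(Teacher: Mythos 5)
Your main argument---that $x$ and $y$ lie in distinct components of $\con\backslash\{e\}$---is correct, but it takes a genuinely different route from the paper's. The paper works with the ultralimits $Z=\lim^\omega_e U_{n,s_n}$ and $Y=\lim^\omega_e U_{n,t_n}$ of two distinct unbounded components and shows directly that $Z\cap Y\subset \lim^\omega_e B_n$: if $x=(z_n)=(y_n)$ with $z_n\in U_{n,s_n}$ and $y_n\in U_{n,t_n}$, then a geodesic from $z_n$ to $y_n$ has length $o(d_n)$ yet must cross $\mathcal N_{\kappa(d_n)}(B_n)$, so the crossing point is yet another representative of $x$ lying within $\kappa(d_n)$ of $B_n$. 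You instead assume a connecting path exists in $\con\backslash\{e\}$ and lift it, via uniform continuity and an intersection of finitely many $\omega$-large sets, to polygonal paths in $X$ avoiding $\mathcal N_{\kappa(d_n)}(B_n)$ $\omega$-almost surely---essentially the dyadic approximation device of \ref{limitloop} run in reverse. The quantitative bookkeeping (the $3\delta/4$ versus $\delta/4$ margins, the $o(d_n)$ connecting geodesics, the inclusion of $\mathcal N_{\kappa(d_n)}(B_n)$ in the ball of radius $M+\kappa(d_n)$ about $e_n$) all checks out; you are implicitly using that components of the open set $\con\backslash\{e\}$ are path components, which holds because $\con$ is geodesic and hence locally path connected, and deserves a sentence. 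What the paper's route buys in exchange is a slightly stronger structural conclusion, namely that the limits of the two components can only meet inside $\lim^\omega_e B_n$.

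The one genuine gap is the unboundedness claim. You derive it from \ref{unbounded}, but that lemma assumes $X$ is homogeneous, whereas \ref{limitseparates} is stated for an arbitrary geodesic metric space; under the stated hypotheses you are not entitled to it, and without homogeneity there is no a priori reason a component of $\con\backslash\{e\}$ should be unbounded. The paper gets unboundedness from a different source: since each $U_{n,i}$ is unbounded, path connected, and comes within $\kappa(d_n)+M=o(d_n)$ of $e_n$, the limit sets $Z\backslash B$ and $Y\backslash B$ are themselves nonempty and unbounded, and the unbounded components of $\con\backslash B$ are located inside these two disjoint sets. If you only need the lemma in the homogeneous setting---which is how the paper applies it in \ref{one-ended}---your appeal to \ref{unbounded} is harmless, but as a proof of the statement as written the unboundedness step should be replaced by an argument along the paper's lines.
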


\begin{proof}
Let $\{U_{n,1},\cdots, U_{n, i_n}\}$ be the set of unbounded connected components of $X\backslash\mathcal N_{k(d_n)}(B_n)$.  Let $B = \lim_e^\omega B_n$, $Z=  \lim_e^\omega U_{n, s_n}$, and $Y = \lim_e^\omega U_{n,t_n}$ where $s_n, t_n$ are distinct elements of $\{1,\cdots, i_n\}$ \as.  Since $B_n$ is uniformly bounded and $U_{n,i}$ is unbounded, both $Y\backslash B$ and $Z\backslash B$ are nonempty and hence unbounded.

Suppose that $x\in Z\cap Y$.  Then $x = (z_n)= (y_n)$ where $z_n \in U_{n, s_n}$ and  $y_n \in U_{n, t_n}$.  Since $U_{n, s_n}$ and  $U_{n, t_n}$ are in distinct connected components of $X\backslash\mathcal N_{k(d_n)}(B_n)$, every path originating in $U_{n,s_n}$ and terminating in $U_{n, t_n}$ passes through $\mathcal N_{k(d_n)}(B_n)$.  By considering a geodesic from $z_n$ to $y_n$, we can find $b'_n \in\mathcal N_{k(d_n)}(B_n)$ such that $\d(z_n, b'_n)+\d(b'_n, y_n)= \d(z_n, y_n)$  which implies that $x = (b'_n)$.  As well, there exists $b_n\in B_n$ such that $\d(b_n, b_n')\leq \kappa(d_n)$.  Hence $x = (b_n') = (b_n)$.

Thus $Z\cap Y \subset B$ and the components of $Z\backslash B, Y\backslash B$ are distinct unbounded components of $\con\backslash B$.

\end{proof}

At times it will be convenient to consider separating sets which are unbounded.

\begin{lem}\label{limitseparates2}\ulabel{limitseparates2}{Lemma}
Let $(B_n, e_n)$ be a sequence of pointed subsets of a geodesic metric space $X$, $\omega$ an ultrafilter, and $d=(d_n)$ an $\omega$-divergent sequence.  Suppose that $a= (a_n), b= (b_n) \in \con$ are points such that there exists a sublinear function $\kappa\co\mathbb N\to \mathbb R$ such that $a_n, b_n$ are in distinct components of $X\backslash \mathcal N_{\kappa(d_n)}(B_n)$ $\omega$-almost surely.

Then $ \lim_e^\omega B_n$ separates $\con$ into at least two connected components and $a$, $b$ are in distinct components of $\con\backslash \lim_e^\omega B_n$.
\end{lem}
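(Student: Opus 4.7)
The plan is to adapt the proof of Lemma \ref{limitseparates} by replacing its single-geodesic shortcut---which was valid only because the two cone points there coincided---with a piecewise-geodesic approximation that also tracks the separation between $a$ and $b$. Set $B := \lim_e^\omega B_n$, and suppose for contradiction that a continuous path $\gamma \co [0,1] \to \con \setminus B$ joins $a$ to $b$. Since $\gamma([0,1])$ is compact and $B$ is closed in $\con$, $\epsilon := \d(\gamma,B) > 0$.

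Using uniform continuity of $\gamma$, pick a partition $0 = t_0 < t_1 < \cdots < t_m = 1$ with $\d(\gamma(s),\gamma(t)) < \epsilon/3$ for $s,t$ in the same subinterval, and set $p_i = \gamma(t_i)$ (so $p_0 = a$, $p_m = b$). Let $\tilde\gamma$ be the concatenation of cone geodesics $[p_i,p_{i+1}]$; each segment has length less than $\epsilon/3$ and lies in the $(\epsilon/3)$-ball about $p_i$, hence $\d(\tilde\gamma,B) \geq 2\epsilon/3$. Fix representatives $p_i = (p_i^n)$ with $p_0^n = a_n$ and $p_m^n = b_n$, let $\alpha_i^n$ be a geodesic of $X$ from $p_i^n$ to $p_{i+1}^n$ (parameterized on $[0,1]$ proportional to arc length), and concatenate them into a path $\alpha^n \co [0,m] \to X$ from $a_n$ to $b_n$.

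The key claim is that $\omega$-almost surely $\alpha^n \cap \mathcal N_{\kappa(d_n)}(B_n) = \emptyset$, which would contradict the hypothesis. If the claim fails, then $\omega$-almost surely there exist a segment index $i_n \in \{0,\ldots,m-1\}$ and a parameter $u_n \in [0,1]$ with $\alpha_{i_n}^n(u_n) \in \mathcal N_{\kappa(d_n)}(B_n)$; finite pigeonhole forces $i_n = i$ for some fixed $i$ $\omega$-almost surely. Set $u_0 := \lim^\omega u_n$ and $q := [p_i,p_{i+1}](u_0)$, a point on $\tilde\gamma$. Choosing $b_n^* \in B_n$ with $\d(\alpha_i^n(u_n),b_n^*) \leq \kappa(d_n)$ and using that $\alpha_i^n$ is a geodesic of length $\d(p_i^n,p_{i+1}^n) = O(d_n)$, I would estimate
\[ \d(\alpha_i^n(u_0),b_n^*) \leq |u_0-u_n|\cdot \d(p_i^n,p_{i+1}^n) + \kappa(d_n), \]
whose $d_n$-rescaled $\omega$-limit is $0$. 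A triangle inequality through $p_i^n$ gives $\d(b_n^*,e_n) = O(d_n)$, so $(b_n^*)$ represents a point of $B$ and equals $q$ in the cone. Thus $q \in B$, contradicting $\d(\tilde\gamma,B) \geq 2\epsilon/3 > 0$.

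The main obstacle is the possible unboundedness of $B$, which is what prevents a direct replay of Lemma \ref{limitseparates}: arbitrary near-$B_n$ points in $X$ do not automatically descend to cone points, because without control on where in $B_n$ the approximating sequence lives, the required bounded-distance-to-$(e_n)$ condition can fail. The piecewise-geodesic reduction circumvents this by producing geodesic pieces in $X$ of controlled length $O(d_n)$ whose parameters interact cleanly with ultrafilter limits, so that the sublinear slack $\kappa(d_n)$ disappears upon rescaling by $d_n$ exactly where needed and the point $b_n^*$ stays a bounded cone-distance from the fixed anchor $p_i^n$.
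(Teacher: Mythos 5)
Your proof is correct, but it takes a genuinely different route from the paper. The paper's proof (given for \ref{limitseparates} and declared to carry over verbatim) works with the sets $Z=\lim^\omega U_n$ and $Y=\lim^\omega V_n$, where $U_n, V_n$ are the components of $X\backslash \mathcal N_{\kappa(d_n)}(B_n)$ containing $a_n, b_n$: it shows $Z\cap Y\subset \lim^\omega_e B_n$ by running a geodesic in $X$ between two representatives of a common point and locating a point of $\mathcal N_{\kappa(d_n)}(B_n)$ on it, then concludes that $Z$ and $Y$ yield distinct components. Your argument instead assumes a path in $\con\backslash \lim^\omega_e B_n$ from $a$ to $b$, discretizes it into a piecewise geodesic with short pieces, lifts each piece to a geodesic of $X$ of length $O(d_n)$, and derives a contradiction with the hypothesis that $a_n$ and $b_n$ are separated in $X$. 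What your route buys is that the passage from ``the limit sets meet only inside $\lim^\omega_e B_n$'' to ``$a$ and $b$ lie in distinct components'' --- which the paper leaves implicit --- is replaced by a direct non-existence proof for connecting paths; you also correctly isolate and resolve the one issue specific to unbounded $B_n$, namely that a near-$B_n$ point must be anchored at bounded rescaled distance from $p_i^n$ before $(b_n^*)$ can be declared a point of the cone. Two small points to tidy: (1) the lemma speaks of connected components while you rule out paths, so you should note that $\lim^\omega_e B_n$ is closed, hence $\con\backslash\lim^\omega_e B_n$ is open in the geodesic (so locally path-connected) space $\con$, where components and path components coincide; (2) the point $\bigl(\alpha_i^n(u_0)\bigr)$ lies on the limit of your chosen $X$-geodesics, which need not be the cone geodesic $[p_i,p_{i+1}]$ used to define $\tilde\gamma$, so either define $\tilde\gamma$ as that limit or simply conclude from $\d\bigl(\bigl(\alpha_i^n(u_0)\bigr), p_i\bigr)\leq \epsilon/3$ and $\d(p_i, \lim^\omega_e B_n)\geq\epsilon$ that this point cannot lie in $\lim^\omega_e B_n$; the contradiction is unaffected.
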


The proof is the same as for bounded sets; the only difference is that we are not able to conclude that the components are unbounded since we cannot apply \ref{unbounded}.

\begin{defn}
Let $X$ be a connected, locally connected topological space.  A point $x\in X$ is a \emph{local cut-point} if there exists an open connected neighborhood $U$ of $x$ such that $U\backslash\{x\}$ has at least two connected components.  A point $x\in X$ is a \emph{global cut-point} if $X\backslash\{x\}$ has at least two connected components.  $X$ is \emph{wide} if none of its asymptotic cones has a global cut-point. $X$ is \emph{unconstricted} if one of its asymptotic cones has no global cut-points.  $X$ is \emph{constricted} if all of its asymptotic cones have global cut-points.

Let $B_1\subset B_2\subset \cdots$ be an ascending sequence of bounded sets in a metric space $X$ such that every set of bounded diameter is eventually contained in $B_n$ for some $n$.  This implies that $\cup_n B_n = X$.

Two descending sequences $ U_1\supset U_2\supset \cdots$ and $V_1\supset V_2\supset \cdots$ of subsets of $X$ are \emph{equivalent} if for every $n$ there exists integers $m,k$ such that $V_m\subset U_n$ and $U_k\subset V_n$.

An \emph{end of $X$} is a descending sequence $U_1\supset U_2\supset \cdots$ where $U_i$ is an unbounded component of $X\backslash B_i$.  It can be shown that up to the given equivalence on descending sequences of subsets of $X$ the set of ends of $X$ does not depend on $\{B_n\}$.

A metric space $X$ is \emph{one-ended}, if $X\backslash B$ has a unique unbounded connected component for every bounded subset $B$ of $X$.

\end{defn}
We will use the following definition and lemma from \cite{DMS}.

\begin{defn}\label{div} Let $X$ be a geodesic metric space, and let $0<\delta<1$ and $\gamma \geq 0$. Let $a,b,c\in X$ with $\d\bigl(c,\{a,b\}\bigr)=r>0$, where $\d(c,\{a,b\})$ is the minimum of $\d(c,a)$ and $\d(c,b)$. Define $\dv_{\red{\gamma }}(a,b,c;\delta)$ as the infimum of the lengths of paths $a, b$ that avoid the ball $\Ball(c,\delta r -\gamma )$ (note that by definition a ball of non-positive radius is empty). If no such path exists, take $\dv_{\gamma }(a,b,c;\delta)=\infty$.
\end{defn}

\begin{lem}[{\cite[Lemma 3.14]{DMS}}]\label{divcondition}\ulabel{divcondition}{Lemma}
Let $X$ be a geodesic metric space. Let $\omega$ be any ultrafilter and  $d=(d_n)$ be an $\omega$-divergent sequence. Let $a=(a_n), b=(b_n), c=(c_n)\in \con$. Let $r=\d\bigl(c, \{a,b\}\bigr)$. The following conditions are equivalent for any $0\le \delta<1$.

\begin{enumerate}[(i)]
\item The closed ball $\overline{\Ball}(c,\delta)$ in $\con$ separates $a$ from $b$.
\item For every $\delta'>\delta$ and every (some) $\gamma \geq 0$ the limit $\lim^\omega \frac{\dv_{\gamma }(a_n,b_n,c_n;\frac{\delta'}{r})}{d_n}$ is $\infty$.
\end{enumerate}

\end{lem}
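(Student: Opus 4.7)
The plan is to prove both directions of the equivalence by contrapositive, bridging $X$ and $\con$ via Lemma~\ref{limitloop}-style lifts of paths on one side and a discretization/geodesic-concatenation construction on the other. Write $r_n=\d_X(c_n,\{a_n,b_n\})$; the fundamental scaling input is that $\lim^\omega r_n/d_n=r$, so for any fixed $\delta'>0$ and $\gamma\ge0$,
\[
\lim{}^\omega\frac{(\delta'/r)r_n-\gamma}{d_n}=\delta'.
\]
Every comparison between the cone-radius $\delta$ and the $X$-radius $(\delta'/r)r_n-\gamma$ will be driven by this identity together with the strict inequality $\delta'>\delta$.

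For $(i)\Rightarrow(ii)$, suppose to the contrary that some $\delta'>\delta$ and some $\gamma\ge0$ give $L:=\lim^\omega\dv_\gamma(a_n,b_n,c_n;\delta'/r)/d_n<\infty$. Choose $X$-paths $q_n$ from $a_n$ to $b_n$ avoiding $\Ball\bigl(c_n,(\delta'/r)r_n-\gamma\bigr)$ with $|q_n|\le(L+1)d_n$ for $\omega$-a.s.\ $n$, and reparametrize each proportional to arc length on $[0,1]$, so that $\d_X(q_n(s),q_n(t))\le(L+1)|s-t|d_n$. Then $q(t):=(q_n(t))$ is a well-defined $(L+1)$-Lipschitz path in $\con$ from $a$ to $b$, and for each $t$ the scaling identity gives $\d(q(t),c)\ge\delta'>\delta$. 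Hence $q$ misses $\overline{\Ball}(c,\delta)$, contradicting $(i)$.

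For $(ii)\Rightarrow(i)$, suppose $\overline{\Ball}(c,\delta)$ does not separate $a$ from $b$. Because $\con$ is geodesic, it is locally path connected, so $a$ and $b$ can be joined by a path $p\co[0,1]\to\con\setminus\overline{\Ball}(c,\delta)$. Compactness of $p([0,1])$ supplies $\epsilon>0$ with $\d(p(t),c)\ge\delta+\epsilon$ for all $t$. Fix $\eta<\epsilon/10$, a subdivision $0=t_0<\cdots<t_N=1$ with $\d(p(t_i),p(t_{i+1}))<\eta$, and representatives $p(t_i)=(y_{i,n})$ where $y_{0,n}=a_n$, $y_{N,n}=b_n$. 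For $\omega$-a.s.\ $n$ the finitely many inequalities
\[
\d_X(y_{i,n},y_{i+1,n})\le 2\eta d_n,\qquad \d_X(y_{i,n},c_n)\ge(\delta+\epsilon-\eta)d_n
\]
hold simultaneously. Let $p_n$ be the concatenation of $X$-geodesics $[y_{i,n},y_{i+1,n}]$; the triangle inequality puts every point of $p_n$ at $X$-distance at least $(\delta+\epsilon-3\eta)d_n$ from $c_n$, while $|p_n|\le 2N\eta d_n$. Taking $\delta':=\delta+\epsilon/2$ and any $\gamma\ge0$, the scaling identity forces $(\delta'/r)r_n-\gamma<(\delta+\epsilon-3\eta)d_n$ for $\omega$-a.s.\ $n$, so $p_n$ witnesses $\dv_\gamma(a_n,b_n,c_n;\delta'/r)\le|p_n|=O(d_n)$, contradicting $(ii)$.

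The central obstacle is the interlocking of three scales -- cone-distances, $X$-distances, and the scaling factor $d_n$ -- and arranging that the strict inequality $\delta'>\delta$ has enough slack to absorb both the additive defect $\gamma$ and the $o(d_n)$ correction in $r_n\sim r d_n$. The equivalence of the ``every $\gamma$'' and ``some $\gamma$'' readings of $(ii)$ is then a short monotonicity remark: whenever $\delta''<\delta'$, the quantity $(\delta'-\delta'')r_n/r$ tends to infinity and eventually dominates any fixed $\gamma-\gamma_0$, so $\Ball(c_n,(\delta''/r)r_n-\gamma_0)\subset\Ball(c_n,(\delta'/r)r_n-\gamma)$ for $\omega$-a.s.\ $n$, and hence $\dv_{\gamma_0}(a_n,b_n,c_n;\delta''/r)\le\dv_{\gamma}(a_n,b_n,c_n;\delta'/r)$.
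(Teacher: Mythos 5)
The paper does not prove this statement; it is imported verbatim as \cite[Lemma 3.14]{DMS}, so there is no internal proof to compare against. Your argument is correct and is the standard one: the two contrapositives correctly exchange a short $X$-path avoiding $\Ball\bigl(c_n,(\delta'/r)r_n-\gamma\bigr)$ for a cone-path avoiding $\overline{\Ball}(c,\delta)$ (using $\lim^\omega r_n/d_n=r$ to convert radii, and the strict gap $\delta'>\delta$ to absorb both $\gamma$ and the $o(d_n)$ error), and your closing monotonicity remark does reconcile the ``every $\gamma$'' and ``some $\gamma$'' readings --- one just has to instantiate it with the target pair $(\delta'',\gamma)$ sitting on the \emph{larger}-$\delta$ side so that the known infinite limit becomes a lower bound for the desired one, which your inequality $\dv_{\gamma_0}(a_n,b_n,c_n;\delta''/r)\le\dv_{\gamma}(a_n,b_n,c_n;\delta'/r)$ permits. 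The only points you pass over silently are the degenerate case $r\le\delta$ (where $a$ or $b$ lies in the closed ball and ``separates'' is vacuous) and the standard fact that in the locally path-connected space $\con$ the components of the open set $\con\setminus\overline{\Ball}(c,\delta)$ are path components; neither affects the substance.
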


The following proposition is immediate, as it holds for all homogeneous geodesic metric spaces, see \cite{Geoghegan}.

\begin{prop} An asymptotic cone of a finitely generated group can have 0,1,2 or uncountable many ends. \end{prop}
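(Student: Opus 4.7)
The plan is to reduce the statement to the corresponding assertion about arbitrary homogeneous geodesic metric spaces, since every asymptotic cone of a finitely generated group is such a space: the diagonal action by sequences from $G$ gives a transitive action by isometries on $\operatorname{Con}^\omega(G,d)$. The case of $0$ ends corresponds to $\con$ being bounded, which can happen only for finite $G$ (and in that case every cone is a point). Otherwise, I would run the classical Hopf--Freudenthal doubling argument, transplanted from the group setting to the setting of a transitive isometry action.

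Concretely, fix a basepoint $y$ in the cone $Y = \con$ and an ascending exhaustion $B_1 \subset B_2 \subset \cdots$ by closed balls around $y$. Suppose $Y$ has at least three ends, so that for some $n_0$ the set $Y \setminus B_{n_0}$ has at least three distinct unbounded components $U_1, U_2, U_3$. Pick $z \in U_1$ deep inside $U_1$, i.e.\ with $\d(z, B_{n_0})$ large enough that $\phi(B_{n_0}) \subset U_1$ for an isometry $\phi$ sending $y$ to $z$, which exists by homogeneity. Since $\phi$ permutes ends, $Y \setminus \phi(B_{n_0})$ again has at least three unbounded components; at most one of them can meet $Y \setminus U_1$, so at least two of them lie inside $U_1$. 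This produces at least four distinct unbounded components of $Y \setminus (B_{n_0} \cup \phi(B_{n_0}))$, and by iterating the translation into each newly produced component one builds an infinite binary tree of nested unbounded open sets; distinct infinite branches then give inequivalent descending sequences, hence $2^{\aleph_0}$ distinct ends.

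The main obstacle is the bookkeeping needed to be sure that the translates really do produce \emph{distinct} ends at each stage. One has to verify that (a) the isometry $\phi$ can be chosen so that $\phi(B_{n_0})$ sits strictly inside $U_1$ at arbitrarily large depth, (b) the two new unbounded components inside $U_1$ are genuinely separated from the previously chosen ends, and (c) the iteration refines any initial branch choice into a properly descending sequence of unbounded components. All three points follow from homogeneity and the fact that unbounded components in the cone are themselves unbounded (Lemma~\ref{unbounded}), which lets one always find a deep point in any component.

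A cleaner alternative, and the one pointed to in the excerpt, is simply to cite \cite{Geoghegan}, where the Hopf--Freudenthal trichotomy is established for any connected locally compact topological group (and, more generally, for any space with a transitive group of isometries having sufficiently nice exhaustions). Since $\con$ is a homogeneous geodesic metric space, this yields $|\mathrm{Ends}(\con)| \in \{0,1,2\} \cup \{\text{uncountable}\}$ immediately, with no additional cone-specific argument required.
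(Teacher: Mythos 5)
Your proposal is correct and, in its final paragraph, coincides exactly with the paper's own (one-line) proof: the paper simply observes that the trichotomy holds for all homogeneous geodesic metric spaces and cites \cite{Geoghegan}. The Hopf--Freudenthal doubling sketch you give first is just the argument behind that citation, so no genuinely different route is involved.
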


\begin{lem}
Let $X$ be a homogenous geodesic metric space.  If $\con$ has a local cut-point, then there exists a cone of $X$ with a global cut-point.
\end{lem}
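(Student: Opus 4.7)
The plan is to rescale the metric to produce a new asymptotic cone in which $p$ becomes a global cut-point, verifying this via the divergence criterion of Lemma~\ref{divcondition}.

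Since $X$ is geodesic, open balls in $\con$ are path-connected, so the local cut-point hypothesis supplies $\epsilon_0>0$ with $\Ball(p,\epsilon_0)\setminus\{p\}$ disconnected. Fix representatives $p=(p_n)$, choose $a=(a_n)$ and $b=(b_n)$ in distinct components of that punctured ball, and pick $\epsilon$ with $\max(\d(a,p),\d(b,p))<\epsilon<\epsilon_0$. The first step is a quantitative separation statement: \emph{for every $M,k\in\N$, $\omega$-almost surely every path in $X$ from $a_n$ to $b_n$ of length at most $Md_n$ that lies inside $\Ball(p_n,\epsilon d_n)$ meets $\Ball(p_n,d_n/k)$.} Otherwise a sequence of counterexamples $\sigma_n$ would descend to a continuous path $\sigma(t)=(\sigma_n(t))$ of length $\le M$ in $\overline{\Ball(p,\epsilon)}\setminus\Ball(p,1/k)\subset \Ball(p,\epsilon_0)\setminus\{p\}$ from $a$ to $b$, contradicting the chosen components. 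A standard ultrafilter diagonalization, using that the witnessing sets are nested in both $M$ and $k$, then produces a sequence $k_n\to\infty$, capped by $\sqrt{d_n}$, such that for each fixed $M$ the conclusion holds $\omega$-almost surely with $k=k_n$.

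Now set $d'_n=d_n/\sqrt{k_n}$, so $d'_n\to\infty$, $d'_n=o(d_n)$, and $k_nd'_n/d_n=\sqrt{k_n}\to\infty$. For $n$ large, let $a'_n$ (resp.\ $b'_n$) be the point at distance $d'_n$ from $p_n$ on a geodesic $[p_n,a_n]$ (resp.\ $[p_n,b_n]$); then $a'=(a'_n)$ and $b'=(b'_n)$ lie in the new cone $\operatorname{Con}^\omega(X,(p_n),d')$ at distance $1$ from $p$. I claim condition (ii) of Lemma~\ref{divcondition} with $\delta=0$ holds in this new cone, giving that $\{p\}$ separates $a'$ from $b'$ globally. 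Suppose instead that for some $\delta'>0$ and some $M>0$, there exist $\omega$-almost surely paths $\tau_n$ from $a'_n$ to $b'_n$ of length at most $Md'_n$ avoiding $\Ball(p_n,\delta'd'_n)$. Since $|\tau_n|=o(d_n)$ and $\d(a'_n,p_n)=d'_n=o(d_n)$, each $\tau_n$ lies in $\Ball(p_n,\epsilon d_n)$ for $n$ large. Concatenating $\tau_n$ with the geodesic subarcs $[a_n,a'_n]$ and $[b'_n,b_n]$---each of length at most $\epsilon d_n$ and sitting on geodesics through $p_n$, hence outside $\Ball(p_n,d'_n)$---produces a path from $a_n$ to $b_n$ in $\Ball(p_n,\epsilon d_n)$ of total length at most $3\epsilon d_n$ for $n$ large. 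The diagonalized Step~1 (applied with length bound $M=3\epsilon$) forces this concatenation to meet $\Ball(p_n,d_n/k_n)$; but $\Ball(p_n,d_n/k_n)\subset \Ball(p_n,d'_n)$ while the extensions avoid $\Ball(p_n,d'_n)$, so the deep point must lie on $\tau_n$. This forces $\delta'd'_n\le d_n/k_n$, i.e.\ $\delta'\sqrt{k_n}\le 1$, contradicting $k_n\to\infty$.

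The essential difficulty is Step~1, which converts the purely topological local cut-point assumption into a uniform quantitative estimate on paths in $X$; once this is in hand, the scaling $d'_n=d_n/\sqrt{k_n}$ is dictated by the need to make the pinch radius $d_n/k_n$ small compared to the new unit $d'_n$ while keeping $d'_n$ itself small compared to $d_n$, so that any bypass path in the new cone remains trapped inside the old neighborhood of $p$ and must traverse the pinch.
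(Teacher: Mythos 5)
Your proof is correct, but it takes a genuinely different route from the paper's. The paper first proves (as you do, though you compress this step to one clause) that the local cut-point $\tilde x$ separates every sufficiently small punctured ball $\Ball(\tilde x,\epsilon')\backslash\{\tilde x\}$; it then passes to the rescaled cones $X_k=\ccon{X}{e_n}{\frac{d_n}{k}}$, in which $\tilde x$ cuts the ball of radius $k\epsilon$, and takes the ultralimit $\lim^\omega X_k$, invoking \cite[Corollary 3.24]{DS} (an asymptotic cone of an asymptotic cone of $X$ is again an asymptotic cone of $X$) to conclude that this iterated cone is a cone of $X$ with a global cut-point. You instead push the separation all the way down to a quantitative statement about paths in $X$ itself (your Step~1), diagonalize over the ultrafilter to extract a single divergent sequence $k_n$, and exhibit an explicit scaling sequence $d'_n=d_n/\sqrt{k_n}$ for which \ref{divcondition} certifies that $p$ is a global cut-point of $\ccon{X}{p_n}{d'_n}$. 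What each approach buys: the paper's argument is much shorter but leans on the iterated-cone theorem of Dru\c tu--Sapir and leaves implicit the persistence of the separation under the final ultralimit (the ``it is easy to see \dots hence'' step, which itself requires a quantitative argument of roughly the kind you supply); your argument is longer and requires the diagonalization bookkeeping, but it is self-contained modulo \ref{divcondition} (already quoted in the paper), produces a single explicit asymptotic cone of $X$ rather than an iterated cone, and makes the quantitative content of the rescaling completely transparent. All the estimates check out: the concatenated path $[a_n,a'_n]*\tau_n*[b'_n,b_n]$ stays in $\Ball(p_n,\epsilon d_n)$, has length at most $3\epsilon d_n$ $\omega$-almost surely, its geodesic tails avoid $\Ball(p_n,d'_n)\supset\Ball(p_n,d_n/k_n)$, and the resulting inequality $\delta'\sqrt{k_n}<1$ contradicts $\lim^\omega k_n=\infty$.
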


\begin{proof}
Suppose that $\con$ has a local cut-point. By homogeneity, $\tilde x = (x_n)$ is a local cut-point.  Suppose that $U$ is an open connnected neighborhood of $\tilde x$ such that $U\backslash \{\tilde x\}$ has two components.

\begin{clm*}
There exists an $\epsilon>0$ such that $\tilde x$ separates every ball about $\tilde x$ with radius at most $\epsilon$.
\end{clm*}

Let $B_\epsilon$ be the ball in $\con$ about $\tilde x$ of radius $\epsilon$.

Fix $\epsilon>0$ such that $B_\epsilon$ is a subset of $U$.  Let $u,v$ be elements of $U$ which are in different components of $U\backslash \{\tilde x\}$. Any path in $U$ from $u$ to $v$ passes through $\tilde x$.  (Since $\con$ is locally path connected and $U$ is open and connected, $U$ is path connected.) Hence, we can find a path $f\co[0,1]\to U$ such that $f^{-1}(\tilde x) = \{\frac12\}$ and $f(0),f(1)$ are in different components of $U\backslash\{\tilde x\}$.  This implies that the inclusion map from $B_{\epsilon'}\backslash \{\tilde x\}$ to $U\backslash \{\tilde x\}$ is not contain in a single component for any $\epsilon'\leq \epsilon$.  Thus $B_{\epsilon'}\backslash \{\tilde x\}$ is also not connected for any $\epsilon'\leq \epsilon$ which completes the proof of the claim.

We can now consider the cones $X_k^\omega= \ccon{X}{e_n}{\frac{d_n}{k}}$.  It is easy to see that $\tilde x$ is a cut-point of the ball of radius $k\epsilon$ in $X_k$.  Hence, $\lim^\omega X_k$ has a global cut point and by \cite[Corollary 3.24]{DS} $\lim^\omega X_k$ is again an asymptotic cone of $X$.

\end{proof}

\begin{prop}\label{one-ended}\ulabel{one-ended}{Proposition}
Let $X$ be a homogeneous geodesic metric space.  Every asymptotic cone of $X$ is one-ended if and only if $X$ is wide if and only if no asymptotic cone of $X$ has a local cut-point.
\end{prop}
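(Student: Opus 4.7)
The strategy is to circulate among the three conditions. Write (A) for ``every asymptotic cone of $X$ is one-ended,'' (B) for ``$X$ is wide,'' and (C) for ``no asymptotic cone of $X$ has a local cut-point.'' I plan to prove (A)$\Rightarrow$(B), (B)$\Leftrightarrow$(C), and (B)$\Rightarrow$(A).

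The equivalence (B)$\Leftrightarrow$(C) is essentially formal: the previous lemma is exactly the contrapositive of (B)$\Rightarrow$(C), while (C)$\Rightarrow$(B) holds because a geodesic metric space is locally path-connected, so any global cut-point is automatically a local cut-point with $U=\con$. For (A)$\Rightarrow$(B) I would argue contrapositively: if some cone $\con$ has a global cut-point $x$, then $\{x\}$ is a bounded set whose removal leaves at least two components, each unbounded by Lemma~\ref{unbounded}; hence the cone is not one-ended.

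The main work is (B)$\Rightarrow$(A), again by contrapositive. Suppose some cone $Y=\con$ is not one-ended, and fix a bounded set $B\subset Y$ together with distinct unbounded components $U_1,U_2$ of $Y\setminus B$. Choose $e'\in B$ and pick sequences $a_n\in U_1,\ b_n\in U_2$ with $\d_Y(a_n,B),\d_Y(b_n,B)\geq n$. Form the iterated cone $Z=\operatorname{Con}^\omega\bigl(Y,e',(n)\bigr)$. Rescaling by the divergent sequence $(n)$ collapses the bounded set $B$ to a single point $\tilde B\in Z$, while $a:=(a_n)$ and $b:=(b_n)$ remain at distance $1$ from $\tilde B$ and at distance at least $2$ from each other (since every path in $Y$ from $a_n$ to $b_n$ must cross $B$, so $\d_Y(a_n,b_n)\geq 2n$). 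Because $a_n,b_n$ lie in distinct components of $Y\setminus B=Y\setminus\mathcal N_0(B)$, Lemma~\ref{limitseparates2} applied to the constant pointed sequence $(B,e')$ in $Y$ with sublinear function $\kappa\equiv 0$ gives that $\tilde B$ separates $a$ from $b$ in $Z$. Thus $\tilde B$ is a global cut-point of $Z$, and by \cite[Corollary 3.24]{DS} the iterated cone $Z$ is itself an asymptotic cone of $X$, so $X$ is not wide.

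The main obstacle is this last step: manufacturing an honest cut-point out of a cone that merely fails to be one-ended. The trick is that a cone of a cone is a cone of $X$, so one further layer of rescaling crushes any bounded separating set to a point while Lemma~\ref{limitseparates2} preserves the separation between the two unbounded components that survived in the iterated limit.
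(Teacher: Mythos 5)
Your proposal is correct and follows essentially the same route as the paper: both handle (B)$\Leftrightarrow$(C) via the preceding lemma, dispose of the easy direction with Lemma~\ref{unbounded}, and for the main implication rescale the non-one-ended cone by the divergent sequence $(n)$ to crush the bounded separating set to a point, invoking a limit-separation lemma and \cite[Corollary 3.24]{DS}. The only cosmetic difference is that you cite Lemma~\ref{limitseparates2} with two explicit witness points while the paper cites Lemma~\ref{limitseparates} and tracks unbounded components.
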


\begin{proof}
$X$ is wide if and only if no asymptotic cone of $X$ has a local cut-point follows immediately from the previous lemma.  Thus we need only proof that every asymptotic cone of $X$ is one-ended if and only if $X$ is wide.

The only if direction of this equivalence is trivial.  We must show that if no asymptotic cone of $X$ has a cut-point, then every asymptotic cone of $X$ is one-ended.  Suppose that no asymptotic cone of $X$ has a cut-point but $\con$ is not one-ended for some choice of $\omega, e, d$.  Hence, there exists a bounded subset $\tilde B$ of $\con$ such that $\con\backslash \tilde B$ has at least two unbounded components.  By homogeneity, we may assume that $\tilde x = (x_n) \in \tilde B$.

By \ref{limitseparates}, $\widetilde Y= \ccon{\con}{\tilde x}{n}$ ${}\backslash \lim^\omega B$ has more than one connected component.  Since $\tilde B$ is bounded, $\lim^\omega \tilde B$ is a point in $\widetilde Y$ which separates.  Thus it is a cut-point of $\widetilde Y$.  $\widetilde Y$ is again an asymptotic cone of $X$ \cite[Corollary 3.24]{DS}.  This contradicts the hypothesis that no cone of $X$ has a cut-point.

\end{proof}

In \cite[Theorem 1.4]{DMS}; Drutu, Mozes, and Sapir show that certain semisimple Lie groups (namely those specified in the theorem below) are wide.  Hence, we can apply \ref{one-ended} to obtain the following result.

\begin{thm}
Let $\Gamma$ be an irreducible lattice in a semisimple Lie group of $\mathbb R$-rank ≥ 2. Suppose that $\Gamma$ is either of $\mathbb Q$-rank 1 or is of the form $SL_n(\mathcal {O_S})$ where $n \geq 3$, $\mathcal S$ is a finite set of valuations of a number field $K$ including all infinite valuations, and $\mathcal {O_S}$ is the corresponding ring of $\mathcal S$-integers. Then every asymptotic cone of $\Gamma$ is one-ended.
\end{thm}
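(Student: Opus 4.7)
The statement sits at the end of a chain that is already built for it, so the plan is essentially to observe that the two ingredients have just been assembled. First I would invoke \cite[Theorem 1.4]{DMS} — which the paragraph preceding the theorem explicitly flags as the source — to conclude that for each class of lattice $\Gamma$ listed (irreducible $\R$-rank $\ge 2$ lattices that are either $\mathbb Q$-rank $1$ or of the form $SL_n(\mathcal O_{\mathcal S})$ with $n\ge 3$), the Cayley graph of $\Gamma$ is wide in the sense defined just above \ref{div}. This is a black-box input; verifying the Lie-theoretic hypotheses is not part of the present paper.

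Once wideness is in hand, the theorem follows directly from \ref{one-ended} applied to $X=\Gamma$, since a finitely generated group acts on its Cayley graph cocompactly by isometries and the Cayley graph is a homogeneous geodesic metric space. The equivalence established there says that wideness is the same as every asymptotic cone being one-ended, so we read off exactly the conclusion of the theorem.

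The only thing worth being careful about is the passage from the lattice $\Gamma$ to a geodesic homogeneous metric space on which \ref{one-ended} is stated: strictly speaking one fixes a finite generating set $S$ and works with the Cayley graph $\Gamma(\Gamma,S)$, whose asymptotic cones coincide with those of $\Gamma$ (viewed as a metric space via the word metric) up to bi-Lipschitz equivalence, and the notion of wideness is invariant under quasi-isometry. Once this is noted, there is no obstacle: plugging \cite[Theorem 1.4]{DMS} into \ref{one-ended} gives the result in one line. I would therefore expect the proof to read essentially as ``By \cite[Theorem 1.4]{DMS}, $\Gamma$ is wide; by \ref{one-ended}, every asymptotic cone of $\Gamma$ is one-ended.''
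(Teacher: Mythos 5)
Your proposal matches the paper exactly: the theorem is stated as an immediate corollary of \cite[Theorem 1.4]{DMS} (which supplies wideness for the listed lattices) combined with \ref{one-ended}, and the paper gives no further argument beyond the sentence preceding the theorem. Your added remark about passing to the Cayley graph and quasi-isometry invariance is a reasonable bit of care but does not change the route.
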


\ref{one-ended} together with \cite[Corollary 6.13]{DS} give us the following.

\begin{thm}
Let $G$ be a finitely generated non-virtually cyclic group satisfying a law.  Then all asymptotic cones of $G$ are one-ended.
\end{thm}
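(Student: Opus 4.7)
The plan is to combine the two cited results directly. By Proposition~\ref{one-ended}, showing that every asymptotic cone of $G$ is one-ended is equivalent to showing that $G$ is wide, i.e., that no asymptotic cone of $G$ has a global cut-point. Thus the theorem reduces to verifying this widthness hypothesis for finitely generated non-virtually cyclic groups satisfying a law.

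This reduction is immediate from \cite[Corollary 6.13]{DS}, which asserts precisely that in any asymptotic cone of a finitely generated non-virtually cyclic group satisfying a law, there are no cut-points. So the proof consists of two lines: first cite \cite[Corollary 6.13]{DS} to conclude that $G$ is wide, then invoke Proposition~\ref{one-ended} to promote wideness to the conclusion that every asymptotic cone of $G$ is one-ended.

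Since both ingredients are available, there is no genuine obstacle; the only thing to verify is that the hypotheses match (finitely generated, non-virtually cyclic, satisfies a law), which is exactly the hypothesis of the theorem. I would write the proof essentially as: ``By \cite[Corollary 6.13]{DS}, no asymptotic cone of $G$ has a cut-point, so $G$ is wide. By Proposition~\ref{one-ended}, every asymptotic cone of $G$ is one-ended.''
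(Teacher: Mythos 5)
Your proposal is correct and is exactly the argument the paper intends: the theorem is stated immediately after the remark that it follows from \ref{one-ended} together with \cite[Corollary 6.13]{DS}, which is precisely your two-line deduction (no cut-points in any cone, hence wide, hence every cone is one-ended). No further comparison is needed.
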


The following proposition is a well know.  We present it here only for comparison with \ref{cyclic}.

\begin{prop}
Let $G$ be a finitely generated group.  The following are equivalent:
\begin{enumerate}[a)]

    \item\label{A} $G$ is finite.
    \item\label{B} $G$ has an asymptotic cone which is a point.
    \item\label{C} $G$ has an asymptotic cone with 0 ends.

\end{enumerate}
\end{prop}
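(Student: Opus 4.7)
My plan is to prove the cycle (a)$\Rightarrow$(b)$\Rightarrow$(c)$\Rightarrow$(a). The first two implications are essentially immediate from the definitions, so the only real content lies in (c)$\Rightarrow$(a), and even this will be a short application of the bi-infinite geodesic argument already used in Lemma~\ref{unbounded}.

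For (a)$\Rightarrow$(b): if $G$ is finite, the Cayley graph $\Gamma(G,S)$ has finite diameter $D$. For any $\omega$-divergent scaling sequence $d=(d_n)$, the diameter of $(\Gamma(G,S), \d/d_n)$ is $D/d_n \to 0$, so every asymptotic cone collapses to a single point. For (b)$\Rightarrow$(c): a one-point metric space has empty complement of any (nonempty) bounded set, so vacuously has no unbounded components, hence $0$ ends.

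The key step is (c)$\Rightarrow$(a), which I will prove by contrapositive: if $G$ is infinite, then every asymptotic cone has at least one end. Suppose $G$ is infinite, and let $\con$ be any asymptotic cone. Since $G$ is infinite and connected in the word metric, $\Gamma(G,S)$ is unbounded; picking elements $g_n\in G$ with $|g_n|_S = k\, d_n$ for each positive integer $k$ produces points at distance $k$ from the basepoint in $\con$, so $\con$ is unbounded. As noted in the proof of Lemma~\ref{unbounded}, every unbounded homogeneous geodesic metric space yields asymptotic cones containing a bi-infinite geodesic through any given point. Let $\alpha\co\R\to\con$ be such a geodesic, and let $B$ be any bounded subset of $\con$. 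Since $\alpha$ is an isometric embedding and $B$ is bounded, $\alpha^{-1}(B)$ is bounded in $\R$, so there exists $T>0$ with $\alpha(t)\notin B$ for all $|t|>T$. The set $\alpha\bigl([T+1,\infty)\bigr)$ is then a connected, unbounded subset of $\con\setminus B$, and therefore lies in a single unbounded connected component of $\con\setminus B$. Hence $\con$ has at least one end, contradicting (c).

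The main (very mild) obstacle is just being careful that ``unbounded component of the complement of a bounded set'' is really what is produced by the bi-infinite geodesic, rather than merely an unbounded point; the connectivity of the ray $\alpha([T+1,\infty))$ resolves this at once. Everything else is bookkeeping with the definitions, and no use of the deeper machinery (e.g.\ Lemma~\ref{limitseparates} or Proposition~\ref{one-ended}) is required.
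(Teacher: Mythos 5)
Your proposal is correct and follows essentially the same route as the paper: finiteness of $G$ forces the cone to be a point, and for the converse direction the paper likewise observes that an infinite group's cone contains a bi-infinite geodesic, which rules out having $0$ ends. Your explicit ray argument $\alpha([T+1,\infty))$ just spells out the step the paper leaves implicit.
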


\begin{proof}
If $G$ is an infinite finitely generated group, then $\Gamma(G,S)$ contains a bi-infinite geodesic for every finite generating set $S$.  Thus $\gcon$ contains a bi-infinite geodesic for every infinite group $G$.  If $G$ is finite then $\Gamma(G,S)$ is bounded for every generating set $S$ and $\gcon$ is a point for every pair $\bigl(\omega, d\bigr)$.  Thus (\ref{A}) and (\ref{B}) ar equivalent.  Clearly, (\ref{B}) implies (\ref{C}).  If $\gcon$ has 0 ends for some pair $\bigl(\omega, d\bigr)$, then it doesn't contain a bi-infinite geodesic.  Hence (\ref{C}) implies (\ref{A}).

\end{proof}

\begin{prop}\label{cyclic}\ulabel{cyclic}{Proposition}
Let $G$ be a finitely generated group.  The following are equivalent:
\begin{enumerate}[a)]

    \item\label{A'} $G$ is infinite and virtually cyclic.
    \item\label{B'} $G$ has an asymptotic cone which is a line.
    \item\label{C'} $G$ has an asymptotic cone with exactly 2 ends.

\end{enumerate}
\end{prop}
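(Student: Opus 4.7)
The plan is to establish the cycle $(a)\Rightarrow (b)\Rightarrow (c)\Rightarrow (a)$. The implication $(a)\Rightarrow (b)$ follows from quasi-isometry invariance: an infinite virtually cyclic group is quasi-isometric to $\mathbb Z$, so all its asymptotic cones are bi-Lipschitz equivalent to the cone of $\mathbb Z$, which is $\mathbb R$. A homogeneous geodesic metric on a space bi-Lipschitz to $\mathbb R$ is translation-invariant and therefore a scalar multiple of the Euclidean metric, so the cone is isometric to $\mathbb R$. The implication $(b)\Rightarrow (c)$ is immediate since $\mathbb R$ has exactly two ends.

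For $(c)\Rightarrow (a)$, suppose $\gcon$ has exactly two ends. The preceding proposition rules out finite $G$, so $G$ is infinite. Choose a bounded $B\subset \gcon$ whose complement has exactly two unbounded components $U_1,U_2$, and form the further rescaled sequence $Y_n=(\gcon,\d/n)$ based at a point of $B$. By \cite[Corollary~3.24]{DS}, the ultralimit $Y'=\lim^\omega Y_n$ is again an asymptotic cone of $G$. Since $B$ is bounded, it collapses to a single point $p\in Y'$; by Lemma~\ref{limitseparates2}, applied to sequences chosen respectively from $U_1$ and $U_2$, this point $p$ is a global cut-point of $Y'$.

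Homogeneity of $Y'$ then makes every point a global cut-point, and by Lemma~\ref{unbounded} every complementary component is unbounded. A standard argument shows that a geodesic metric space in which every point is a global cut-point is an $\mathbb R$-tree: two distinct geodesics from $a$ to $b$ would force some interior point of one that does not lie on the other to fail to separate $a$ from $b$, and a similar triangle argument shows all triangles are tripods. Moreover, every point $y\in Y'\setminus\{p\}$ is represented by a sequence $\omega$-almost surely in $U_1\cup U_2$, so $Y'\setminus\{p\}\subseteq\lim^\omega U_1\cup\lim^\omega U_2$; by homogeneity, every cut-point of $Y'$ has valence at most $2$. An $\mathbb R$-tree of constant valence $2$ is isometric to $\mathbb R$.

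The final step is to conclude from ``some asymptotic cone of $G$ is isometric to $\mathbb R$'' that $G$ is virtually cyclic. The $\mathbb R$-cone lets one cover the ball of radius $rd_n$ in $G$ by $O(r/\epsilon)$ balls of radius $\epsilon d_n$, yielding linear growth of $G$ along the scaling subsequence; the quantitative form of Gromov's polynomial growth theorem due to van den Dries--Wilkie then forces $G$ to be virtually nilpotent, and a virtually nilpotent group of linear growth is virtually cyclic. I expect this last step to be the main obstacle, since it invokes growth-theoretic machinery external to the cone methods developed in the paper; a secondary subtlety is the careful accounting of components of $Y'\setminus\{p\}$ used to bound the valence of $Y'$ as an $\mathbb R$-tree.
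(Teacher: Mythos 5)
Your directions $(a)\Rightarrow(b)\Rightarrow(c)$ are fine, and your growth-theoretic route for deducing virtual cyclicity once a cone is known to be a line is a legitimate alternative to the paper's citation of \cite[Corollary 6.2]{DS}. The genuine problems are in $(c)\Rightarrow(a)$, in the passage from ``$Y'$ has a global cut-point'' to ``$Y'$ is isometric to $\mathbb R$''. First, the claim that a geodesic space in which every point is a global cut-point must be an $\mathbb R$-tree is not established by the argument you give, and that argument is fallacious: if $\gamma_1,\gamma_2$ are distinct geodesics from $a$ to $b$ and $q$ is an interior point of $\gamma_1$ not on $\gamma_2$, then $q$ indeed fails to separate $a$ from $b$, but it can still be a \emph{global} cut-point of the space, since removing it may disconnect some other part of the space from $a$. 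The general statement is in fact false: a circle with a ray attached at every one of its points is a geodesic space in which every point is a global cut-point, yet it is not an $\mathbb R$-tree. Second, the valence bound is unjustified: $\lim^\omega U_1$ and $\lim^\omega U_2$ need not be connected (two points of $U_1$ at distance of order $n$ from the basepoint may be joinable inside $U_1$ only by paths that return within $o(n)$ of $B$, and such paths pass through $p$ in $Y'$), so the inclusion $Y'\setminus\{p\}\subseteq \lim^\omega U_1\cup\lim^\omega U_2$ does not bound the number of components of $Y'\setminus\{p\}$ by two. A third component is not immediately absurd, because $Y'$ is a different cone from the original $\gcon$, and distinct cones of $G$ can have different numbers of ends. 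This is precisely where the paper does its real work: in \ref{clm1} and \ref{sclm2} it shows that if the separation structure failed, one could manufacture three pairwise-separated unbounded sets outside a large ball about $x_0$ in the \emph{original} cone, contradicting the hypothesis that $\gcon$ has exactly two ends.

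For contrast, the paper never rescales to create a cut-point. It chooses nested two-component decompositions $U_i, V_i$ of $\gcon\backslash B_{i\rho}(x_0)$, picks points $x_{\pm i}$ at distance $i\rho$ from $x_0$, and proves directly that the resulting path $\alpha$ is a quasi-geodesic whose $2\rho$-neighborhood is all of $\gcon$ (both assertions resting on the three-ends contradiction just described); only then does it pass to a further cone, which is forced to be a line and is again a cone of $G$. If you want to salvage your rescaling approach, you would need to import an analogue of that three-ends argument to control the components of $Y'\setminus\{p\}$, and to replace the ``every point is a cut-point implies $\mathbb R$-tree'' step by an argument that actually uses homogeneity or the two-endedness of $\gcon$.
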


\begin{proof}
If $G$ is infinite and virtually cyclic, then $\gcon$ is a line for every pair $\bigl(\omega, d\bigr)$.  Thus (\ref{A'}) implies (\ref{B'}).  The implication $(\ref{B'}) \Rightarrow (\ref{A'})$ is Corollary 6.2 in \cite{DS}; as well, it also follows from \cite{Point}, since a line has finite Minkowski dimension.

Thus we need only show that if $\gcon$ has exactly two ends for some pair $\bigl(\omega, d\bigr)$ than $G$ has an asymptotic cone which is a line.

Suppose that $\gcon\backslash B_\rho(x_0) $ has exactly two unbounded components for some $\rho>0$ and $x_0\in \gcon$.  For each $i$, let $U_i$ and $ V_i$ be the two unbounded components of $\gcon\backslash B_{i\rho}(x_0)$.  We may assume that we have chosen $U_i, V_i$ such that $U_i\supset U_{i+1}$ and $V_i\supset V_{i+1}$ for all $i$.  Fix $x_i\in U_i$ and $x_{-i}\in V_i$ such that $\d(x_0, x_{\pm i}) = i\rho$ for all $i\in\mathbb N$.

Define a path $\alpha\co \mathbb R \to \gcon$ by $\alpha(i) = x_i$, for $i \in \mathbb Z$, and for every $i\in \mathbb Z$ extend $\alpha$ to $[i, i+1]$ by sending the interval to a geodesic joining its endpoints.

\begin{clm}\label{clm1}\ulabel{clm1}{Claim}  $\alpha$ is a quasi-geodesic with constants depending only on $\rho$ and $\gcon$ is contained in the $2\rho$-neighborhood of the image of $\alpha$.\end{clm}

Notice that \ref{clm1} implies that $G$ has an asymptotic cone which is a line since any asymptotic cone of $\gcon$ is a line and an asymptotic cone of $G$.

Let $\alpha_i^-=\alpha\bigr((-\infty, i-4]\bigl)$, $\alpha_i^+ = \alpha\bigl([i+4, \infty)\bigr)$ and $Y_i = \gcon\backslash B_\rho(x_i)$ for all $i$.  By homogeneity, $Y_i$ has exactly 2 unbounded connected components

\begin{sclm}\ulabel{sclm1}{Subclaim} For all $i,j\in \mathbb Z$, $\d(x_i, x_j)\geq \bigl|j-i\bigr|\rho -2\rho$ and hence $\alpha_i^\pm\subset Y_i$.
\end{sclm}

\begin{proof}[Proof of \ref{sclm1}.]
If $i,j$ have the same sign then by applying the triangle inequality to a geodesic triangle with vertices $x_0, x_i, x_j$, we obtain $\d(x_i, x_j)\geq |j-i|\rho$.

Suppose that $i\leq 0 \leq j$.  By construction, every geodesic from $x_i$ to $x_j$ passes within $\rho$ of $x_0$.  Fix a geodesic from $x_i$ to $x_j$ and let $x_0'$ be a point on the geodesic such that $\d(x_0, x_0') \leq \rho$.  Then $-i\rho = \d(x_i, x_0) \leq \d(x_i, x_0') + \rho$ and $j\rho = \d(x_0, x_j) \leq \d(x_0', x_j) + \rho$ which gives us that $(j-i)\rho\leq \d(x_i, x_0')+ \d(x_0', x_j) + 2\rho = \d(x_i,x_j) + 2\rho$.  Thus $\d(x_i, x_j)\geq |j-i|\rho -2\rho$.  If $j\in (\infty,i-4]\cup [i+4, \infty)$, then $\d(x_i,x_j)\geq 2\rho$. Since every point on $\alpha_i^\pm$ is with in $\rho$ of some $x_j$ for $j\in (\infty,i-4]\cup [i+4, \infty)$; $\alpha_i^\pm\subset Y_i$.
\end{proof}

\begin{sclm}\ulabel{sclm2}{Subclaim}  $\alpha_i^+,\alpha_i^-$ are contained in distinct unbounded components of $Y_i$ for all $i$.\end{sclm}

\proof[Proof of \ref{sclm2}.]
We will show the subclaim for $i\geq 0$.  The other case is similiar.  Let $U,V$ be the two disjoint unbounded components of $Y_i$.  By way of contradiction, we will assume that $\alpha_i^\pm$ are both contained in $U$.  Choose $\tilde g \in \prod G$ such that $\tilde g\cdot x_0 = x_i$.  For each $j \geq 1$, let $\beta_j^-=\alpha\bigr((-\infty, -2j-4]\bigl)$, $\beta_j^+ =\alpha\bigl([2j+4, \infty)\bigr)$.  Since $\tilde g$ acts by isometries on $\gcon$, we obtain that $Y_i=\tilde g\cdot  Y_0$ and $\tilde g\cdot\beta_{j}^\pm$ are in distinct unbounded components of $Y_i$ for any $j\geq 1$.

Fix $j\geq i$.  Since $\alpha_i^-,\alpha_i^+$ are contained in the same connected component of $Y_i$ and $\tilde g\cdot\beta_j^-,\tilde g\cdot\beta_j^+$ are contained in distinct connected components of $Y_i$, one of $\tilde g\cdot\beta_j^\pm$ is contained in $V$.  Suppose that $\tilde g\cdot\beta_j^+\subset V$. (Again the other case is similar.)  Notice that $\beta_j^\pm\subset \alpha_i^\pm$ which implies that $\beta_j^-\cup\beta_j^+ \subset U$.

By \ref{sclm1}, $\d(x_i, \tilde g\cdot\beta_j^\pm) \geq (2j+4)\rho-2\rho$ which implies that $\d(x_0, \tilde g\cdot\beta_j^\pm)\geq (j+2)\rho$.  Thus $\tilde g\cdot\beta_j^\pm \subset \gcon\backslash B_{(j+1)\rho}(x_0)$.   Again by \ref{sclm1}, $\beta_j^\pm \subset \gcon\backslash B_{(j+1)\rho}(x_0)$.  Since $B_\rho(x_0), B_\rho(x_i)\subset B_{(j+1)\rho}(x_0)$, each of the three unbounded sets $\beta_j^\pm, \tilde g\cdot\beta_j^+$ must be contained in a distinct connected component of $\gcon\backslash B_{(j+1)\rho}(x_0)$.  Since this holds for any $j\geq i$, $\gcon$ must have at least 3 ends which contradicts our assumption that $\gcon$ has exactly 2 ends.
\endproof

\begin{proof}[Proof of \ref{clm1}]  If $i,j\in \mathbb Z$ have different signs, then $\d(x_i,x_j) \leq \d(x_i, x_0)+ \d(x_j, x_0) = |i|\rho +|j|\rho = |i-j|\rho$

For $4\leq i\leq j-4$, any geodesic from $x_0$ to $x_j$ is passes within $\rho$ of $x_i$ by \ref{sclm2}.  Hence, we may find a point $x_i'$ on a geodesic from $x_0$ to $x_j$ such that $\d(x_i, x_i')\leq \rho$.  Then $i\rho\leq \d(x_0, x_i') + \rho$ and $\d(x_i, x_j) \leq \d(x_i', x_j) +\rho$ which implies that $\d(x_i, x_j)\leq (j-i)\rho +2\rho= |j-i|\rho +2\rho$.   Similarly, we can obtain the inequality $\d(x_i, x_j)\leq |j-i|\rho +2\rho$ for  $j+4\leq i \leq -4$.  It follows that $\alpha$ is a quasi-geodesic.

Suppose that there exists $x\in \gcon$ such that $\d(x, \im{\alpha})\geq 2\rho$.  By homogeneity, $\gcon\backslash B_\rho(x)$ has two unbounded components one of which contains $\im{\alpha}$.  As in the proof of \ref{sclm2}, this would imply that $\gcon$ would have at least three ends.
\end{proof}

Thus any asymptotic cone of $\gcon$ is a line and also an asymptotic cone of $G$ which completes the proof of the proposition.
\end{proof}

\begin{lem}\label{tree}\ulabel{tree}{Lemma}
Suppose that $X$ is an unbounded homogeneous geodesic metric space and $T$ is a vertex homogeneous three valence tree with fixed edge length $\rho$.  If $\con$ has more than two ends and a global cut-point, then there exists an isometry $f\co T\to \con$  such that the components of $T\backslash\{v\}$ map to distinct components of $\con\backslash \bigl\{f(v)\bigr\}$ for every vertex $v$ of $T$.
\end{lem}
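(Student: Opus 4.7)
The plan is to build $f$ inductively, vertex by vertex, using the cut-point at each step to place new vertices into fresh components of $\con$. The key preliminary claim is that for every point $p\in\con$, the complement $\con\backslash\{p\}$ has at least three components, all unbounded. Indeed, by homogeneity of $\con$ and the global cut-point hypothesis, every point is a cut-point, so there are at least two components; these are unbounded by Lemma \ref{unbounded}. By homogeneity the number of components is the same at every point, and if it equalled two everywhere, $\con$ would be topologically $\mathbb R$ (a connected, locally connected, homogeneous geodesic space in which every point has exactly two local sides is homeomorphic to $\mathbb R$), hence two-ended, contradicting the hypothesis of more than two ends.

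Enumerate the vertices of $T$ as $v_0, v_1, v_2, \ldots$ so that each $v_n$ with $n\geq 1$ has a unique neighbor $v_{i(n)}$ with $i(n)<n$. Let $f(v_0)$ be any point of $\con$. Inductively, given $f(v_0), \ldots, f(v_{n-1})$, set $p = f(v_{i(n)})$. Under the inductive assumption that the finite subtree built so far embeds isometrically and that the branches at each already-placed vertex $v$ lie in distinct components of $\con\backslash\{f(v)\}$, every previously placed point other than $p$ itself lies in a single component of $\con\backslash\{p\}$. Pick any other component $C$ and let $f(v_n)$ be a point at distance $\rho$ from $p$ along a geodesic in $\con$ from $p$ into $C$. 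Extend $f$ along each edge of $T$ by parameterizing the corresponding geodesic segment.

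The invariants are preserved: since $p$ separates $f(v_n)$ from every earlier $f(v_j)$ by construction, we have $\d\bigl(f(v_n), f(v_j)\bigr)=\rho + \d\bigl(p, f(v_j)\bigr)$, which matches the tree distance by induction. The same separation argument yields the isometry on the interiors of edges: for $x, y$ on distinct edges of $T$, the tree geodesic between them passes through a sequence of vertices, each of which separates $x$ from $y$ in $\con$, so $\con$-distances add along the tree path. The main obstacle is the preliminary ``at-least-three-components'' step; once this is established the inductive branching construction goes through routinely.
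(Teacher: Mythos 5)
Your construction is the paper's construction: the same vertex-by-vertex induction, the same use of Lemma \ref{unbounded} to find a point at distance exactly $\rho$ inside a fresh component, and the same additivity of distance across cut points to verify the isometry (the paper also records the short check that the separation property survives the passage from the finite stages $T_i$ to the whole of $T$). The one point where you go beyond the paper is in isolating the claim that $\con\backslash\{p\}$ has at least three components for every $p$ --- a fact the paper uses silently when it selects a component of $\con\backslash\{f(e^-)\}$ disjoint from the up-to-two branches already attached at $e^-$ --- and your justification of that claim is where the gap lies. The assertion that a connected, locally connected, homogeneous geodesic space in which every point has exactly two complementary components must be homeomorphic to $\mathbb R$ is stated without proof, and it is not obvious: without homogeneity it is simply false (a circle with a ray attached at each of its points is geodesic and locally connected, every point cuts it into exactly two unbounded pieces, yet it has infinitely many ends), so any argument must exploit homogeneity in an essential way, and ``homeomorphic to $\mathbb R$'' is in any case far more than is needed. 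What you actually need is only that such a space has at most two ends, and that weaker statement is what should be proved directly; as written, the step you yourself call ``the main obstacle'' is not overcome. The remainder is correct modulo one slip of wording: when $v_{i(n)}$ already has two placed neighbors, the previously placed points occupy two components of $\con\backslash\{p\}$, not ``a single component'' --- which is precisely why the three-component claim is indispensable.
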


\begin{proof}Fix $\rho>0$. Let $T$ be a vertex homogeneous 3-valence tree with fix edge length $\rho$.  We will now build an isometry $f\co T \to \con$ such that the three components of $T\backslash\{v\}$ map into distinct components of $\con\backslash\bigl\{f(v)\bigr\}$ for every vertex $v$ of $T$.  Fix a vertex $v_0$ of $T$.

Let $T_i$ be a sequence of connected subtrees of $T$ such that $v_0 = T_1$; $T_i\subset T_{i+1}$; $\cup_i T_i = T$; and $T_{i+1}$ has exactly one vertex not contained in $T_i$.  This implies that $T_{i+1}$ can be obtained from $T_i$ be adding exactly one edge and one vertex.

Let $f(v_0) = x_0$ for some $x_{0}\in\con$.  By induction, assume that we have defined $f$ on $T_i$ such that $f|_{T_i}$ is an isometry and $f$ maps the components of $T_i\backslash\{v\}$ to distinct components of $\con\backslash \{f(v)\}$ for each vertex $v$ of $T_i$.  Let $e$ be the edge of $T$ which is added to $T_i$ to obtain $T_{i+1}$.  Then $e$ has exactly one vertex $e^-$ in $T_i$ and one vertex $e^+$ in $T_{i+1}\backslash T_i$.  Notice that $T_i$ has valence 1 or 2 at $e^-$.  This implies that $T_i\backslash\{e^-\}$ and hence $f\bigr(T_i\backslash\{e^-\}\bigl)$ has at most 2 components.  Let $C$ be a component of $\con\backslash \{f(e^-)\}$ which is disjoint from $f\bigr(T_i\backslash\{e^-\}\bigl)$.  Since all components are unbounded, we may choose a point $x\in C$ such that $\d\bigl(x, f(e^-)\bigl) = \rho$.  Let $f(e^+) = x$ and $f(e)$ be a geodesic from $f(e^-)$ to $f(e^+)$.  It is immediate that the components of $T_{i+1}\backslash\{v\}$ map to distinct components of $\con\backslash \{f(v)\}$ for all vertices $v$ in $T_{i+1}$.  It only remains to show that $f$ restricted to $T_{i+1}$ is still an isometry.  This follows trivially from the fact that if $x,y$ are in distinct components of $ \con\backslash \{z\}$, then $\d(x,y) = \d(x,z)+ \d(z,y)$.

This defines a map $f\co T \to \con$.  Since any two points lie in some $T_i$, $f$ is an isometry.  We must show that the separation condition is preserved in the limit.  Suppose that $v$ is a vertex of $T$ and $T_i$ contains the $2\rho$-neighborhood of $v$.  By construction, $f$ takes the components of $T_i\backslash \{v\}$ into distinct components of $\con\backslash \{f(v)\}$.  Notice that each component of $T\backslash \{v\}$ intersects a component of $T_i\backslash \{v\}$ nontrivially which implies that the separation condition still holds.

\end{proof}

\begin{cor}\label{corTree}\ulabel{corTree}{Corollary}
In addition, $f$ can be chosen such that  $f(t) = \bigl(f_n(t)\bigr)$ for all $t\in T$ where $f_n\co T\to X$ takes edges of $T$ to geodesics in $X$.
\end{cor}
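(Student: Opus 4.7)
The plan is to redo the inductive construction from the proof of Lemma \ref{tree} while simultaneously fixing representatives in $\prod X$ for each vertex image, and replacing each cone geodesic on an edge by the ultralimit of an honest geodesic segment in $X$ between the chosen representatives.

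First I would re-enumerate the exhausting subtrees $T_1 \subset T_2 \subset \cdots$ exactly as before, so that $T_{i+1}$ differs from $T_i$ by one edge $e$ and one new vertex $e^+$. Fix once and for all a representative $(f_n(v_0))_n \in \prod X$ of $x_0 = f(v_0)$. Inductively, when the original construction calls for picking a point $x \in C$ at distance $\rho$ from $f(e^-)$ inside a chosen component $C$ of $\con \setminus \{f(e^-)\}$, I also fix a representative sequence $(f_n(e^+))_n$ of $x$ in $\prod X$. Then for each $n$, define $f_n$ on $e$ to be a geodesic segment in $X$ from $f_n(e^-)$ to $f_n(e^+)$, parameterized proportional to arc length over $e$; since $\d(f_n(e^-),f_n(e^+))/d_n \to \rho$ $\omega$-almost surely, each such segment has length $\rho d_n + o(d_n)$. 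Finally, redefine the cone geodesic on $e$ by $f(t) = (f_n(t))_n$ for $t \in e$.

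The only point that needs verification is that this new $f$ still satisfies the hypotheses produced by the construction of Lemma \ref{tree}, namely that $f|_e$ is a geodesic of length $\rho$ joining $f(e^-)$ and $f(e^+)$ whose image (off of $f(e^-)$) lies in the chosen component $C$. That $f|_e$ is a geodesic in $\con$ follows from the fact that the approximating paths $f_n|_e$ are geodesics in $X$ of rescaled length converging to $\rho = \d(f(e^-),f(e^+))$. For the component condition: a geodesic segment in any metric space cannot revisit its initial point, so the image of $f|_e$ with the single preimage of $f(e^-)$ removed is a continuous image of a connected interval, hence connected; since this set contains $f(e^+) \in C$ and is disjoint from $f(e^-)$, it lies entirely in the single component $C$ of $\con \setminus \{f(e^-)\}$. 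The separation property at all previously constructed vertices is preserved because none of the set-theoretic choices made in Lemma \ref{tree} have changed.

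Since $T$ is countable and $f(t) = (f_n(t))$ is arranged to hold on every vertex and on every edge by construction, it holds on all of $T$, and the corollary follows. The main obstacle is the component-preservation step for the replacement geodesics; once one observes that geodesics do not re-enter their starting point, this reduces to connectedness of a half-open interval's image, and no other complication arises.
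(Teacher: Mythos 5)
Your proposal is correct and follows essentially the same route as the paper: re-run the induction from Lemma \ref{tree}, fix a representative $(x_n)$ when the new vertex image $x\in C$ is chosen, send the new edge to a geodesic in $X$ from $f_n(e^-)$ to $f_n(e^+)$ for each $n$, and observe that the ultralimit of these geodesics is a geodesic in the cone realizing $f(e)$. Your extra verification that the replacement geodesic's image stays in the component $C$ is a harmless elaboration of what the paper dismisses as immediate, since the original construction was already agnostic to which geodesic between $f(e^-)$ and $f(e^+)$ was used.
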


\begin{proof}
We will show how to modify the proof of \ref{tree}.  Using the notation from above, we will inductively defining $f, f_n$ simultaneously.  Suppose that $f, f_n$ are defined as desired on $T_i$.  When choosing $x\in C$ we will also fix a representative $(x_n)$ of $x$.  Let $f_n(e^+) = x_n$ which implies that $f(e^+) = \bigl(f_n(e^+)\bigr) = x$.  Let $f_n$ map $e$ to any geodesic from $f_n(e^-)$ to $f_n(e^+)$ which implies that $f(e) = \bigl(f_n(e)\bigr)$ is a geodesic from $f(e^-)$ to $f(e^+)$.  The rest of the proof remains unchanged.
\end{proof}

\begin{prop}\label{embedd}\ulabel{embedd}{Proposition}
Suppose that $X$ is an unbounded homogeneous geodesic metric space and $C_i$ is a sequence of finite point sets from $\con$.  Then $\lim_e^\omega C_i$ embeds isometrically into $\con$.  In addition; if $C_i$ is nested, then the canonical copy of $C_i$ in $\lim_e^\omega C_i$ is mapped to $C_i$.
\end{prop}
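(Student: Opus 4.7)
The plan is a diagonal-representative construction. Each point of $\con$ is an equivalence class of sequences in $X$; I will find, for each $i$, a ``snapshot'' of $C_i$ inside $X$ with small metric distortion, then assemble these snapshots along a well-chosen subsequence of scales to produce, for each $\bar c=(c_i)\in\prod C_i$, a representative sequence in $X$ of a point of $\con$.

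First, fix representatives $c=[(c^{(n)})_n]\in X^{\N}$ for every $c\in\bigcup_i C_i$. Since $C_i$ is finite, the set
$$A_i=\Bigl\{n:\bigl|\d(c^{(n)},(c')^{(n)})/d_n-\d(c,c')\bigr|<1/i\ \text{for all}\ c,c'\in C_i\Bigr\}$$
is a finite intersection of $\omega$-large subsets of $\N$ and hence $\omega$-large; each $A_i$ is the set of scales at which the chosen representatives accurately reflect the cone distances within $C_i$. Choose $n_i\in A_i$ strictly increasing in $i$. For $\bar c=(c_i)\in\prod C_i$, the proposed image is $\Phi(\bar c)=[(c_i^{(n_i)})_i]$, the diagonal sequence built from these snapshots. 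By the reparametrization result \cite[Corollary 3.24]{DS}, together with homogeneity of $X$ (which lets us freely translate observation points), $[(c_i^{(n_i)})_i]$ represents a well-defined point of an asymptotic cone of $X$ canonically isometric to $\con$.

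The distortion estimate at level $i$ then gives $\bigl|\d(c_i^{(n_i)},(c_i')^{(n_i)})/d_{n_i}-\d(c_i,c_i')\bigr|<1/i$ for any pair of sequences $\bar c=(c_i),\bar c'=(c_i')$, from which
$$\d\bigl(\Phi(\bar c),\Phi(\bar c')\bigr)=\lim{}^\omega\d(c_i^{(n_i)},(c_i')^{(n_i)})/d_{n_i}=\lim{}^\omega\d(c_i,c_i')=\d_{\lim^\omega C_i}(\bar c,\bar c'),$$
showing simultaneously that $\Phi$ descends to equivalence classes and is an isometric embedding. For the nested addendum: if $(C_i)$ is nested and $c\in C_j$, the sequence $\bar c$ eventually equal to $c$ gives $\Phi(\bar c)=[(c^{(n_i)})_i]$, which is a subsequence of our fixed representative $[(c^{(n)})_n]$ of $c$ and therefore equals $c$ in $\con$. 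Hence the canonical copy of $C_j$ in $\lim^\omega C_i$ is sent identically to $C_j\subset\con$.

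The principal technical obstacle is passing from a sequence indexed by $i$ to a genuine point of $\con=\operatorname{Con}^\omega(X,e,d)$, whose ultrafilter and scaling are indexed by $\N$. The Drutu--Sapir reparametrization corollary handles this: pushing $\omega$ forward along $i\mapsto n_i$ gives an ultrafilter $\omega^\star$ for which $\operatorname{Con}^{\omega^\star}\bigl(X,(e_{n_i}),(d_{n_i})\bigr)$ is an asymptotic cone of $X$, and by homogeneity this cone is isometric to $\con$; one must verify that the diagonal sequence yields the correct element under this identification. The remainder of the argument is a formal distortion computation, made possible by the finiteness of each $C_i$ — it is precisely this finiteness that allows $A_i$ to be $\omega$-large as a finite intersection, so that the snapshot step succeeds.
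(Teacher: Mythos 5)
There is a genuine gap at the step you yourself flag as ``the principal technical obstacle.'' Your diagonal sequence $(c_i^{(n_i)})_i$ is indexed by $i$ and measured with the scalars $d_{n_i}$, so it defines a point of $\lim^\omega_i\bigl(X,\d/d_{n_i}\bigr)=\ccon{X}{e_{n_i}}{d_{n_i}}$ (equivalently, of $\operatorname{Con}^{\omega^\star}\bigl(X,e,d\bigr)$ for the pushforward ultrafilter $\omega^\star$). That space is indeed an asymptotic cone of $X$, but it is \emph{not} in general isometric to $\con$: passing to a subsequence of the scaling sequence and replacing $\omega$ by $\omega^\star$ can change the cone (this paper even emphasizes that a single group can have uncountably many $\pi_1$-non-equivalent cones). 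Homogeneity only removes the dependence on the observation sequence $e$, and \cite[Corollary 3.24]{DS} says that an ultralimit of asymptotic cones is again an asymptotic cone --- neither gives an identification with the original $\con$. Note also that the set $\{n_i\}$ need not be $\omega$-large, so the diagonal values do not determine a point of $\con$ at all. The same problem sinks the nested addendum: a subsequence $(c^{(n_i)})_i$ of a representative of $c$ is not a representative of $c$ in $\con$; it represents a point of the reparametrized cone. Since the proposition is later used (in \ref{Rtree} and \ref{fundcutpoint}) to place an $\mathbb R$-tree inside the \emph{given} cone, landing in ``some asymptotic cone of $X$'' is a strictly weaker conclusion.

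The repair is to keep the index set and scaling fixed and instead make the \emph{level} a function of $n$, which is what the paper does: with your sets $A_i$ (taken for all $c,c'\in C_j$, $j\le i$), set $m_n=\max\{i\le n: n\in A_i\}$ (and $m_n=1$ if no such $i$ exists). One checks $\lim^\omega m_n=\infty$, and then maps $(c_n)\mapsto\bigl(\iota_n^{m_n}(c_n)\bigr)_n$, i.e.\ the $n$-th coordinate of the level-$m_n$ snapshot. This sequence is indexed by the original $\N$ with the original $d_n$ and $\omega$, hence is a genuine point of $\con$, and the estimate $\bigl|\d(\cdot,\cdot)/d_n-\d(\cdot,\cdot)\bigr|<1/m_n$ on the $\omega$-large set where $m_n\neq 1$ yields both well-definedness and the isometry. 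Your distortion computation and the role of finiteness of the $C_i$ are exactly right; only the assembly step needs to be replaced by this many-to-one level function rather than an injective selector $i\mapsto n_i$.
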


\begin{proof}
Let $\iota_i\co C_i \to \con$ be the inclusion induced map.  Fix a representative for each element of $C=\cup_i C_i$.  We can now define a double indexed sequence of maps $ \iota_n^i \co C_i \to X$ by letting $\iota_n^i(c)$ be the $n$-th coordinate of our chosen representative for $c\in C$.  Thus, if the $C_i$ are nested and $c\in C_i$; then $\iota_n^j(c) = \iota_n^i(c)$ for all $j\geq i$. Hence, $c = \bigl(\iota_n^{k_i}(c)\bigr)$ for any sequence $k_i$.  This will imply that the map defined below takes the canonical copy of $C_i$ in $\lim^\omega C_i$ to $C_i$. Let

$$A_i = \Bigl\{ n\ \bigl| \ \d(c,c')- \frac{1}{i} \leq \frac{\d\bigl(\iota_n^j(c),\iota_n^j(c')\bigr)}{d_n} \leq \d(c,c') +\frac{1}{i} \text{ for all } c,c'\in C_j \text{ where } j\leq i\Bigr\}.$$

Since $\bigl|\bigcup\limits_{j\leq i}C_j\Bigr|$ is finite and $\iota_j$ is an isometry for every $j$, $A_i$ is $\omega$-large.  Let $m_n = \max \{i \ | \ n\in  A_i \text{ and } i\leq n \}$, if this set is non-empty and $m_n = 1$ otherwise.  Suppose that $m_n$ was bounded by $L$ on some $\omega$-large set $A$.  Then $A_{2L}\cap A \subset \{1, \cdots, 2L-1\}$,  which is a contradiction since $\omega (A_{2L}) = \omega(A) = 1$ and $\omega \bigl( \{1, \cdots, 2L-1\} \bigr) =0 $.  Thus $\lim^\omega m_n = \infty$.

Define $\tilde \iota\co \lim^\omega_e C_i \to \con$ by $\tilde \iota\bigl((c_n)\bigr) = \bigl(\iota_n^{m_n}(c_n)\bigr)$.

\textbf{Claim:} $\tilde \iota$ is a well-defined isometric embedding of $\lim^\omega_e C_n$ into $\con$.

Fix $c,c'\in \lim^\omega C_n$.  We may choose representatives $c_n,c_n'\in C_n$ such that $c = (c_n)$ and $c= (c'_n)$. By construction, $\d(c_n, c'_n) -\frac{1}{m_n} \leq \frac{\d\bigl(\iota_n^{m_n}(c_n),\iota_n^{m_n}(c'_n)\bigr)}{d_n} \leq \d(c_n, c'_n) + \frac{1}{m_n}$  for all $n$ such that $m_n\neq 1$.  Since $m_n$ is $\omega$-divergent, this set is $\omega$-large and
\begin{align*}\d(c,c') &= \lim{}^\omega \Bigl[\d(c_n, c'_n) -\frac{1}{m_n}\Bigr] \leq \lim{}^\omega\Bigl[ \frac{\d\bigl(\iota_n^{m_n}(c_n), \iota_n^{m_n}(c'_n)\bigr)} {d_n} \Bigr] \\ & \leq \lim{}^\omega \Bigl[\d(c_n, c'_n) + \frac{1}{m_n}\Bigr] = \d(c,c').\end{align*}
Thus $\tilde \iota$ is independent of the chosen representative and is an isometry.

\end{proof}

We can now use \ref{tree} to prove that $\mathbb R$-trees can also be transversally embedded into cones with cut-points.

\begin{lem}\label{Rtree}\ulabel{Rtree}{Lemma}
Suppose that $X$ is a unbounded homogeneous geodesic metric space and $T$ is a universal $\mathbb R$-tree with continuum branching at every point.  If $\con$ has more than two ends and a global cut-point, then there exists an isometry $f\co T\to \con$  such that the components of $T\backslash\{v\}$ map to distinct components of $\con\backslash \bigl\{f(v)\bigr\}$ for every $v$ in $T$.
\end{lem}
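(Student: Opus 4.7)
The plan is to combine Corollary \ref{corTree}, which provides isometric embeddings of discrete trivalent trees, with Proposition \ref{embedd}, which transports nested finite subsets of $\con$ to their ultralimit inside $\con$. The universal $\mathbb R$-tree $T$ with continuum branching will be realized inside the ultralimit of a countable dense-in-itself trivalent subtree built inside $\con$.

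First I would construct a countable $\mathbb R$-subtree $S\subset\con$ that is dense-in-itself, trivalent at every branch point, and transversally embedded in the sense that distinct components of $S\backslash\{v\}$ lie in distinct components of $\con\backslash\{v\}$ at every branch point $v$. Starting from a trivalent embedding $S_0\subset\con$ with edge length $\rho_0$ given by Corollary \ref{corTree}, I would inductively refine: at a countable dense set of points $v$ along the edges of $S_k$, the homogeneity of $\con$ guarantees that the hypotheses of Lemma \ref{tree} still hold at $v$, so I can attach a short new arc originating at $v$ and landing inside one of the unbounded components of $\con\backslash\{v\}$ disjoint from the current subtree. This is possible because $\con\backslash\{v\}$ has at least three unbounded components while the existing $S_k$ near $v$ occupies at most two of them. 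Letting $\rho_k\to 0$ and diagonalizing over the countable dense targets produces $S=\bigcup_k S_k$ with the desired properties.

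Next, choose a nested exhaustion $C_1\subset C_2\subset\cdots$ of $S$ by finite subsets with $\bigcup_i C_i=S$. Proposition \ref{embedd} supplies an isometric embedding $\tilde\iota\co\tilde T:=\lim^\omega_e C_i\to\con$ whose restriction to the canonical copy of $S\subset\tilde T$ is the identity onto $S\subset\con$. I would then verify that $\tilde T$ is a complete $\mathbb R$-tree with continuum branching at every point: completeness is automatic for ultralimits; the $\mathbb R$-tree property descends from $S$ at each finite scale since each $C_i$ is a finite subtree; and at any point $v=(v_n)\in\tilde T$ continuum-branching follows by choosing $2^{\aleph_0}$ distinct ultrafilter-sequences of branches issuing from trivalent vertices of $S$ arbitrarily close to each $v_n$ (such vertices are dense by construction). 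Since $T$ is characterized up to isometry as the complete $\mathbb R$-tree with $2^{\aleph_0}$ branches at every point and weight at most $2^{\aleph_0}$, this yields an isometric copy of $T$ inside $\tilde T$, and hence inside $\con$ via $\tilde\iota$.

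For the transversality of the resulting embedding of $T$ into $\con$, I would apply Lemma \ref{limitseparates2}. Given $a=(a_n), b=(b_n)\in T\subset\tilde T$ lying in distinct components of $T\backslash\{v\}$, their representatives $a_n, b_n\in C_n\subset S$ lie $\omega$-almost surely in distinct components of $S\backslash\{v_n\}$; the transversality of $S$ in $\con$ from the first step upgrades this to distinct components of $\con\backslash\{v_n\}$, and Lemma \ref{limitseparates2}, applied after unpacking to representatives in $X$, then yields the separation of $\tilde\iota(a)$ and $\tilde\iota(b)$ by $\tilde\iota(v)$ in $\con$. The main obstacle is the first step: simultaneously refining $S_k$ at countably many new subdivision points while preserving transversality of the accumulated subtree in $\con$; this requires careful bookkeeping of which components of $\con\backslash\{v\}$ have been occupied at each stage and fast-shrinking $\rho_k$ so that each newly-attached arc remains inside its chosen component throughout the construction.
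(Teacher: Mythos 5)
There is a genuine gap at the final step, where you try to push the separation property through the ultralimit. The transversality of $S$ in $\con$ is a statement about single points of the cone separating other points of the cone, and this is not the hypothesis of \ref{limitseparates2}: that lemma requires $a_n,b_n$ to lie in distinct components of $X\backslash \mathcal N_{\kappa(d_n)}(B_n)$ for a sublinear $\kappa$, i.e.\ a quantitative separation statement in the underlying space $X$, where deleting a point (or any bounded set) typically disconnects nothing. Converting ``$v$ separates $a$ from $b$ in $\con$'' into a statement about representatives in $X$ is precisely the content of \ref{divcondition}: one needs $\lim^\omega \dv_{\gamma}(a_n,b_n,v_n;\delta)/d_n=\infty$, and for separation to survive the diagonal choice of indices this divergence condition has to be built into the diagonalization itself. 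That is exactly what the $\omega$-large sets $B_i$ in the paper's proof do (they force $\dv_1(\cdot,\cdot,\cdot;\frac{1}{ir})>i\,d_n$ for all separating triples at stage $\leq i$, so that \ref{divcondition}(i) applies in the limit); nothing in your outline plays that role, so the claimed transversality of the limiting embedding is unproved. A secondary problem in the same step: if $a,b$ lie in distinct components of $\tilde T\backslash\{v\}$, it does not follow that $a_n,b_n$ lie in distinct components of $S\backslash\{v_n\}$ $\omega$-almost surely --- $v_n$ may sit at distance $o(1)$ off the arc $[a_n,b_n]$ --- so even the finite-stage separation you want to feed into the limit requires first adjusting representatives, after which you are back to needing the divergence estimate.

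Your first step is also much heavier than what is needed, and is where the approaches genuinely diverge. The paper never builds a single dense-in-itself transversal subtree of $\con$: it takes, for each $i$, a \emph{separate} transversal embedding $f_i$ of a discrete trivalent tree $T_i$ with edge length $2^{-i}$ (the images are not nested in $\con$), applies \ref{embedd} to the finite vertex sets $f_i(V_i)$, and obtains the universal $\mathbb R$-tree directly as $\lim^\omega_{\tilde t} T_i$. This avoids the bookkeeping you identify as the main obstacle, avoids any appeal to the Dyubina--Polterovich characterization, and avoids the delicate claim that $\lim^\omega C_i$ is continuum-branching at \emph{every} point: for your $S$, at points of the ultralimit arising as limits of later and later attachment stages, all nearby third branches have length tending to $0$, and verifying the valency there requires exactly the kind of careful scale bookkeeping you defer. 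If you want to salvage your outline, the essential repair is to record, alongside the metric conditions defining your index sets, an $\omega$-large set on which the divergence functions of all separating triples from the current finite stage exceed $i\,d_n$, and to define the diagonal index $m_n$ using the intersection of both families, as in the paper.
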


\begin{proof}
Let $T_i$ be a three valence tree with edge length $\frac{1}{2^i}$ such that $T_i\subset T_{i+1}$ for all $i\in \mathbb N$ and $t_0$ a fixed vertex in $T_1$.  We will assume that $T_i$ is endowed with the edge metric.  We will use $[v,w]$ to denote the geodesic from $v$ to $w$ in $T_i$ and $(v,w) = [v,w]\backslash \{v,w\}$.  If $v,w\in T_i\cap T_j$, then $[v,w]$ is independent of whether the geodesic is taken in $T_i$ or in $T_j$.

By \ref{tree}, there exist isometries $f_i\co T_i \to \con$ which satisfy the separation condition of \ref{tree}. By homogeneity, we may assume $f_i(t_0) = f_j(t_0)$ for all $i,j$.  By \ref{corTree}, there exists a sequence of maps $f_n^i \co T_i \to X$ such that $f_i(t) = \bigl(f_n^i(t)\bigr)$ for all $t\in T_i$.
We will also require that $f_n^i(t_0) = f_n^j(t_0)$ for all $i,j$.

Let $V_i$ be the vertices of the ball of radius $i$ about $t_0$ in $T_i$.  Then $\bigl|f_i(V_i)\bigr|$ is finite set and \ref{embedd} implies $\lim^\omega_e f_i(V_i)$ embeds isometrically.  While $\lim^\omega_e f_i(V_i)$ is a universal $\mathbb R$-tree, we must still guarantee that the embedding preserves the separation property.  To do this we will show how to modify the proof of \ref{embedd} so as to guarantee that the embedding preserves the desired separation property.  Let
$$A_i = \Bigl\{ n\ \bigl| \ \d(v,w)- \frac{1}{i} \leq \frac{\d\bigl(f_n^j(v),f_n^j(w)\bigr)}{d_n} \leq \d(v,w) +\frac{1}{i} \text{ for all } v,w\in V_j \text{ where } j\leq i\Bigr\}.$$

For $r = \d\bigl(f_n^j(v_0), \{f_n^j(v_1),f_n^j(v_2)\}\bigr)$ and $v_0,v_1,v_2 \in T_j$ such that $v_0$ separates $v_1$ from $v_2$ in $T_j$, let $ \rho_n^i(j,v_0,v_1,v_2) = \dv_{1}\bigl(f_n^j(v_1),f_n^j(v_2),f_n^j(v_0);\frac{1}{ir}\bigr)$.  Let
$$B_i = \{ n\ | \ \rho_n^i(j,v_0,v_1,v_2)> id_n \ | \ j\leq i; \text{ } v_0,v_1,v_2\in V_j; \text{ and } v_0\in(v_1,v_2) \}.$$

As before $A_i$ is $\omega$-large for each $i$.

\textbf{Claim:} \emph{$B_i$ is an $\omega$-large set.}\\
For each $j$ and each triple $v_0,v_1, v_2\in V_j$ such that $v_0\in(v_1,v_2)$, we have that $$\lim{}^\omega \frac{\dv_{1}\bigl(f_n^j(v_1),f_n^j(v_2),f_n^j(v_0);\frac{1}{2ir}\bigr)}{d_n} = \infty$$ by \ref{divcondition} where $r = \d\bigl(f_j(v_0), \{f_j(v_1),f_j(v_2)\}\bigr)$.  Thus $\frac{\dv_{1}\bigl(f_n^j(v_1),f_n^j(v_2),f_n^j(v_0);\frac{1}{ir_n}\bigr)}{d_n} > i $ on an $\omega$-large set where $r_n =  \d\bigl(f_n^j(v_0), \{f_n^j(v_1),f_n^j(v_2)\}\bigr)$.  Since $V_j$ is finite, $B_i$ is the finite intersection of $\omega$-large sets which completes the proof of the claim.

Let $m_n = \max \{i \ | \ n\in B_i\cap A_i \text{ and } i\leq n \}$, if the intersection is non-empty for some $i\leq n$ and $m_n = 1$ otherwise.

Define  $\tilde t = (t_0)$ and $\tilde f\co \lim^\omega_{\tilde t}  T_i \to \con$ by $\tilde f(t) = \bigl(f_n^{m_n}(t)\bigr)$.

Notice that $\lim^\omega_{\tilde t} T_i = \lim^\omega_{\tilde t} V_i$ and $\tilde f\bigl((t_o)\bigr) = f_i(t_0)$ for all $i$.  As in the proof of \ref{embedd}, $\lim^\omega m_n = \infty$ and $\tilde f$ is a well-defined isometric embedding of $\lim^\omega T_i$ into $\con$.

All that remains is to show that $\tilde f$ satisfies the desired separation condition.  Suppose that $v_0 ,v_1,v_2$ are points on $\lim^\omega_{\tilde t} T_i$ such that $v_1,v_2$ are in different components of $\lim^\omega_{\tilde t} T_i\backslash \{v_0\}$.  Then there exist representatives $(v_n^0), (v_n^1), (v_n^2)$ of $v_1, v_2, v_3$ respectively such that $v_n^1, v_n^2$ are in distinct components of $T_n\backslash \{v_n^0\}$ $\omega$-almost surely.  Thus $$\frac{\dv_{1}\bigl(f_n^j(v_n^1),f_n^j(v_n^1),f_n^j(v_n^0);\frac{1}{m_nr_n}\bigr)}{d_n} >m_n $$
on an $\omega$-large set where $r_n =  \d\bigl(f_n^j(v_0), \{f_n^j(v_1),f_n^j(v_2)\}\bigr)$ and $j\leq m_n$.  \ref{divcondition} implies that $\tilde f(v_1),\tilde f(v_2)$ are in distinct components of $\con\backslash \{\tilde f(v_0)\}$.  This completes the proof.

\end{proof}

\begin{prop}\label{fundcutpoint}\ulabel{fundcutpoint}{Proposition}
Let $G$ be a finitely generated group.  If $\gcon$ has a global cut-point, then $\gcon$ is simply connected or has uncountable fundamental group.
\end{prop}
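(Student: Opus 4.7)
The plan is to assume $\gcon$ is not simply connected and construct uncountably many classes in $\pi_1(\gcon)$ using the transversally embedded $\mathbb R$-tree furnished by Lemma \ref{Rtree}.

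First, I would reduce to the case that $\gcon$ has more than two ends. If $G$ is finite or infinite virtually cyclic the cone is a point or a line by Proposition \ref{cyclic}, already simply connected, and we are done. Otherwise the cut-point hypothesis combined with Lemma \ref{unbounded} gives that $\gcon\setminus\{p\}$ has at least two \emph{unbounded} components, so $\gcon$ has at least two ends; exactly two ends would again force $\gcon$ to be a line by Proposition \ref{cyclic}, contradicting the non-simple-connectedness. Hence Lemma \ref{Rtree} applies and supplies an isometric transversal embedding $f\colon T\to\gcon$ of a universal $\mathbb R$-tree $T$ with continuum branching at every point.

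Fix a non-trivial loop $\gamma$ of diameter $D$, and by homogeneity base it at $p=f(v_0)$ for some $v_0\in T$. For each of the continuum many branches $b$ of $T$ at $v_0$, choose $q_b\in b$ with $d_T(v_0,q_b)>D$ and an isometry $g_b$ of $\gcon$ taking $p$ to $f(q_b)$. Since $g_b$ is an isometry, the loop $g_b\gamma$ has diameter $D$ and sits in the $D$-ball about $f(q_b)$, which by transversality of $f$ lies entirely in the component of $\gcon\setminus\{p\}$ corresponding to branch $b$. Define
$$\gamma_b \;=\; f\bigl([v_0,q_b]\bigr)\cdot(g_b\gamma)\cdot f\bigl([v_0,q_b]\bigr)^{-1}$$
based at $p$; this gives a continuum-indexed family of loops, each one threaded entirely into a distinct component of $\gcon\setminus\{p\}$ apart from the shared basepoint $p$.

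The heart of the proof, and the main obstacle, is showing that distinct reduced words in the $\gamma_b^{\pm1}$ represent distinct elements of $\pi_1(\gcon,p)$, yielding an uncountable (in fact free of uncountable rank) subgroup. The idea is that a null-homotopy of a reduced word $w=\gamma_{b_1}^{\epsilon_1}\cdots\gamma_{b_k}^{\epsilon_k}$ must, because each syllable is separated from the next by the cut-point $p$ (and by transversality of $f$), decompose into null-homotopies of each syllable $g_{b_i}\gamma$ within the closure of its branch; but $g_{b_i}\gamma$ is an isometric translate of the non-trivial loop $\gamma$, a contradiction. Rigorously executing this disk decomposition across the cut-point is the delicate part: one must use both the transversality of $f$ at $p$ and the separation of components in $\gcon\setminus\{p\}$, in the spirit of the tree-graded machinery of \cite{DS}, to extract the per-syllable null-homotopies from a global one.
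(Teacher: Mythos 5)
Your setup is sound and follows the same route as the paper: reduce to the case of more than two ends, invoke \ref{Rtree} to get a transversally embedded universal $\mathbb R$-tree, and push an essential loop $\gamma$ by continuum many isometries so that the translates land in pairwise distinct components of the complement of the cut-point. But the proposal has a genuine gap exactly where you flag it: you never actually execute the step showing that a homotopy between two of these loops (or a null-homotopy of a product of them) can be cut at the cut-point. That step is not a routine afterthought to be handled ``in the spirit of the tree-graded machinery''; it is the entire content of the proof. The paper's mechanism is concrete and short: if $h\co A\to\gcon$ is an annulus realizing a homotopy from $g_x\cdot\gamma$ to $g_y\cdot\gamma$, and these loops lie in distinct components of $\gcon\backslash\{x_0\}$, then $h^{-1}(\{x_0\})$ separates the two boundary circles of $A$; by the Phragm\'en--Brouwer properties a \emph{single} component $C$ of $h^{-1}(\{x_0\})$ already separates them; redefining $h$ to be constantly $x_0$ on the bounded planar region enclosed by $C$ produces a continuous map of a disc, i.e.\ a null-homotopy of $g_x\cdot\gamma$, contradicting essentiality of $\gamma$. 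Without naming this (or an equivalent) argument, your claim that ``a null-homotopy must decompose into per-syllable null-homotopies'' is an assertion, not a proof.

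A secondary point: you are proving more than the proposition asks. For \ref{fundcutpoint} you only need uncountably many distinct homotopy classes, so there is no need to conjugate the translates back to a common basepoint or to argue about reduced words in the $\gamma_b^{\pm1}$; pairwise non-homotopy of the translated loops (via the annulus argument above) already gives an uncountable fundamental group. The free-subgroup refinement is the content of \ref{fundcutpoint2}, and it requires a separate disc-cutting argument (cutting a null-homotopy of a concatenation at the cut-point to isolate one syllable), which again rests on the same Phragm\'en--Brouwer cutting step you have left unproved. So the fix is the same in either formulation: supply the cut-and-collapse argument at $h^{-1}(\{x_0\})$.
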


\begin{proof}

We may assume that $G$ is not virtually cyclic, since the theorem is trivial in that case.  Then $G$ has an asymptotic cone $\gcon$ with a global cut-point and more than two ends.  By \ref{Rtree}, $\gcon$ contains an isometrically embedded universal $\mathbb R$-tree $T$ such that the components of $T\backslash\{v\}$ map to distinct components of $\gcon\backslash \bigl\{f(v)\bigr\}$ where $f$ is the isometric embedding of $T$ into $\gcon$.

Suppose that $\gamma\co S^1 \to \gcon$ is an essential loop and fix $x_0\in f(T)$ which we may assume is a base point of $\gamma$.  Let $\rho= 2\diam{\gamma}$ and $S = \{x\in f(T) \ |\ \d(x, x_0) = \rho \}$.  Then $S$ has cardinality continuum and $\d(x,y) =2\rho$ for all $x,y \in S$.  For $x\in S$, choose $g_x\in \prod G$ such that $g_x\cdot x_0 = x$.  Let $S_\gamma = \{g_x\cdot \gamma \ | \ x\in S\}$ which is an uncountable set of essential loops in $\gcon$.

\textbf{Claim:}\emph{ No two loops from $S_\gamma$ are homotopic.}

Suppose that $g_x\cdot\gamma $ is homotopic $g_y\cdot\gamma$.  Then there exists a continuous map $h\co A \to \gcon$ of a planar annulus which takes one boundary component to $g_x\cdot\gamma $ and the other to $g_y\cdot\gamma$.  Since $\d(g_x\cdot\gamma, x_0) >0$, $g_x\cdot\gamma $ and $g_y\cdot\gamma$ are in distinct components of $\gcon\backslash \{x_0\}$.  Thus $h^{-1}\bigr(\{x_0\}\bigl)$ separates the two boundary components of the annulus $A$.  Then there exists a single component $C$ of $h^{-1}\bigr(\{x_0\}\bigl)$ which separates the boundary components of $A$.  This is a consequence of the Phragm\'{e}n-Bower properties (see \cite{hw}).  We can then modify $h$ by mapping the component of the plane bounded by $C$ to $x_0$. This is a null homotopy of $g_x\cdot\gamma $ which contradicts our choice of $\gamma$ and completes the proof of the claim and theorem.

\end{proof}

\begin{cor}\label{fundcutpoint2}\ulabel{fundcutpoint2}{Corollary}
Let $G$ be a finitely generated group.  If $\gcon$ has a global cut-point, then $\gcon$ is simply connected or its fundamental group contains an uncountably generated free subgroup.
\end{cor}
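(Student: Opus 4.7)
The plan is to adapt the proof of Proposition \ref{fundcutpoint} so that the continuum-many essential loops it produces generate a \emph{free} subgroup of $\pi_1(\gcon,x_0)$, rather than merely pairwise distinct homotopy classes. Assume $G$ is not virtually cyclic; then by Lemma \ref{Rtree}, $\gcon$ contains a transversally embedded universal $\mathbb R$-tree $f\co T\to \gcon$. Fix $x_0\in f(T)$, an essential loop $\gamma$ at $x_0$, and set $\rho=2\diam{\gamma}$. Using the continuum branching at $x_0$, extract from $f(T)$ a set $S$ of cardinality continuum at distance $\rho$ from $x_0$ with $\d(x,y)=2\rho$ for all distinct $x,y\in S$. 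For each $x\in S$ pick $g_x\in \prod G$ satisfying $g_x\cdot x_0=x$ and let $\alpha_x$ denote the geodesic from $x_0$ to $x$ in $T$; define the based loop
$$\ell_x = \alpha_x \cdot (g_x\cdot \gamma)\cdot \alpha_x^{-1}.$$
The goal is to show $\{[\ell_x]\}_{x\in S}$ freely generates a subgroup of $\pi_1(\gcon,x_0)$.

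First I would verify that each $\ell_x$ lies in the closed sector $V_x := U_x\cup\{x_0\}$, where $U_x$ is the component of $\gcon\setminus\{x_0\}$ containing $x$. Indeed, $g_x\cdot\gamma$ has diameter $\rho/2$ and basepoint $x$ at distance $\rho$ from $x_0$, so it avoids $x_0$ and, being connected, lies in $U_x$; similarly $\alpha_x\setminus\{x_0\}\subset U_x$. Transversality of $f$ ensures $V_x\cap V_y=\{x_0\}$ for $x\ne y$. The argument of \ref{fundcutpoint} applied to $\ell_x$ then shows each $[\ell_x]$ is nontrivial: any null-homotopy of $\ell_x$ gives one of $g_x\cdot\gamma$, and hence of $\gamma$, contradicting its essentiality.

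The main step is to show every reduced word $w=\ell_{x_1}^{\epsilon_1}\cdots\ell_{x_k}^{\epsilon_k}$ with $x_j\ne x_{j+1}$ and $\epsilon_j\in\{\pm1\}$ represents a nontrivial class. Suppose instead $h\co D^2\to\gcon$ is a null-homotopy of $w$. Then $\partial D^2$ meets $h^{-1}(x_0)$ at $k$ marked points $p_1,\dots,p_k$ partitioning it into arcs $A_1,\dots,A_k$ with $h(A_j\setminus\{p_{j-1},p_j\})\subset U_{x_j}$; because consecutive $U_{x_j},U_{x_{j+1}}$ are distinct components of $\gcon\setminus\{x_0\}$, no path in $D^2\setminus h^{-1}(x_0)$ connects a point of $A_j\setminus\{p_{j-1},p_j\}$ to one of $A_{j+1}\setminus\{p_j,p_{j+1}\}$. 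The Phragm\'en-Brouwer properties invoked in \ref{fundcutpoint} then furnish, for each such pair, a connected component of $h^{-1}(x_0)$ separating $A_j$ from $A_{j+1}$ in $D^2$. Iterating, I would assemble these into a planar forest $\mathcal T\subset h^{-1}(x_0)$ cutting $D^2$ into $k$ closed subdisks $D_1,\dots,D_k$ with $\partial D_j\cap\partial D^2=A_j$ and the remainder of $\partial D_j$ mapped to $x_0$; collapsing those pieces turns $h|_{D_j}$ into a null-homotopy of $\ell_{x_j}^{\epsilon_j}$, contradicting the previous paragraph.

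The principal obstacle is executing this iterated Phragm\'en-Brouwer step. The case $k=2$ reduces directly to the annulus argument of \ref{fundcutpoint}, but for $k\geq 3$ one must simultaneously separate all $k$ cyclically consecutive pairs $(A_j,A_{j+1})$ by connected subsets of $h^{-1}(x_0)$ that fit together in a tree-like fashion, and verify that the resulting subdisks assemble into a disk decomposition without one engulfing another. Everything else is bookkeeping on top of \ref{fundcutpoint}, so once this separation lemma is in hand, the corollary follows.
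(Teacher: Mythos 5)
Your setup (the transversal $\mathbb R$-tree from \ref{Rtree}, the continuum set $S$, and the based loops $p_x * (g_x\cdot\gamma) * \overline p_x$) matches the paper exactly, but the heart of the argument is left undone and is aimed at the wrong target. The step you flag as the ``principal obstacle'' --- simultaneously separating all $k$ cyclically consecutive pairs $(A_j,A_{j+1})$ by components of $h^{-1}(x_0)$, assembling them into a planar forest, and checking the resulting subdisks do not engulf one another --- is precisely the delicate planar topology the paper's proof avoids, and you do not carry it out. The paper needs no iterated Phragm\'en--Brouwer argument at all: given a null-homotopy $h\co\mathbb D\to\gcon$ of $\mathrm{x}_1^{n_1}*\cdots*\mathrm{x}_k^{n_k}$, it takes $C$ to be the closure of the \emph{single} connected component of $h^{-1}\bigl(\gcon\backslash\{x_0\}\bigr)$ containing the boundary arc carrying the first syllable, observes that $h(\partial C\backslash\{p\})=x_0$, and defines $h'$ to agree with $h$ on $C$ and to be constantly $x_0$ off $C$. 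This excises one syllable and exhibits it as null-homotopic, contradicting its essentiality --- no decomposition of the whole disk into $k$ pieces is required. Until you either execute your separation lemma or replace it with this one-component excision, the proof is incomplete.

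There is a second, smaller but genuine, error: your stated goal that $\{[\ell_x]\}_{x\in S}$ \emph{freely} generates a subgroup is too strong and may simply be false, since $[\gamma]$ (hence each $[\ell_x]$) could have finite order in $\pi_1(\gcon,x_0)$; the paper explicitly allows for this. The correct intermediate statement is that the $[\ell_x]$ generate a \emph{free product of cyclic groups}, which requires ruling out relations $\mathrm{x}_1^{n_1}\cdots\mathrm{x}_k^{n_k}=1$ with arbitrary nonzero exponents $n_j$ for which each syllable $\mathrm{x}_j^{n_j}$ is essential --- your reduced words with $\epsilon_j\in\{\pm1\}$ and $x_j\neq x_{j+1}$ do not even cover $\ell_x^2$. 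One then extracts an uncountably generated free subgroup from the uncountable free product of nontrivial cyclic groups, which is the final (easy) step of the paper's proof and is missing from yours.
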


\begin{proof}
Suppose that we have constructed $f\co T\to \gcon$, $\gamma$, $S = \{x\in f(T) \ |\ \d(x, x_0) = \rho \}$, and $S_\gamma = \{g_x\cdot \gamma \ | \ x\in S\}$ as in the proof of \ref{fundcutpoint}.  Let $p_x\co[0,1]\to f(T)$ be the unique geodesic in $f(T)$ from $x_0$ to $x\in S$.

Then $S_\gamma' = \{\mathrm{x} = p_x*g_x\cdot \gamma*\overline p_x \ | \ x\in S\}$ is a set of loops based at $x_0$ $\bigl($ where $\overline p_x (t) = p_x(1-t)\bigr)$.

\textbf{Claim:}\emph{ $S_\gamma'$ generates a free product of cyclic groups.}

Suppose that $\mathrm{x}_1^{n_1}* \cdots*\mathrm{x}_k^{n_k}$ is a null homotopic loop in $\gcon$ where $\mathrm{x}_i \neq \mathrm{x}_{i+1}$, $\mathrm{x}_1 \neq \mathrm{x}_k$ and $\mathrm{x}_i^{n_i}$ is an essential loop.  Then there exists $h\co \mathbb D \to \gcon$ a map from the unit disc in the plane such that $h(\partial \mathbb D)$ is a parameterization of the curve $\mathrm{x}_1^{n_1}* \cdots*\mathrm{x}_k^{n_k}$.  Let $C$ be the closure of the connected component of $h^{-1}\bigl(\gcon\backslash\{x_0\}\bigr)$ containing the subpath $p$ of  $\partial \mathbb D^2$ which maps to  $\mathrm{x}_1^{n_1}$.  By construction, $\partial \mathbb D\cap C = p$ and $h(\partial C\backslash\{p\}) = x_0$.  Define $h'\co \mathbb D \to \gcon$  by $h'(y) = h(y)$ for $y\in C$ and $h'(y) = x_0$ for $y\not\in C$.  Then $h'$ is continuous and $\mathrm{x}_1^{n_1}$ is null homotopic which contradicts our choice of $x_1^{n_1}$.  This completes the proof of the claim.

While the subgroup generated by $S_\gamma'$ may not by a free group  ($\gamma$ might have finite order in the fundamental group), it is the free product of cyclic groups.  Thus it is easy to find an uncountably generated free subgroup.

\end{proof}

\begin{cor}\label{notwide}\ulabel{notwide}{Corollary}
Let $G$ be a finitely generated group.  If $G$ is constricted, then every asymptotic cone of $G$ is simply connected or has uncountable fundamental group.  If $G$ is not wide, then $G$ has an asymptotic cone which is simply connected or has uncountable fundamental group.
\end{cor}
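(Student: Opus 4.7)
The plan is to derive both statements as direct corollaries of \ref{fundcutpoint2}. Recall that by definition $G$ is constricted precisely when every asymptotic cone of $G$ has a global cut-point, and $G$ is wide precisely when no asymptotic cone of $G$ has a global cut-point; consequently $G$ is not wide exactly when at least one asymptotic cone $\gcon$ has a global cut-point.

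First I would dispose of the first statement. Suppose $G$ is constricted and fix an arbitrary asymptotic cone $\gcon$. By hypothesis $\gcon$ has a global cut-point, so \ref{fundcutpoint2} applies and yields that $\gcon$ is either simply connected or has fundamental group containing an uncountably generated free subgroup. Since this holds for every choice of $\omega$ and $d$, the conclusion follows.

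For the second statement, assume $G$ is not wide. Then by definition there is some choice of ultrafilter $\omega$ and scaling sequence $d$ for which $\gcon$ has a global cut-point. Applying \ref{fundcutpoint2} to that particular cone produces either simple connectedness or an uncountably generated free subgroup in $\pi_1(\gcon)$, exhibiting the desired asymptotic cone.

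I do not anticipate any obstacle here: the corollary is essentially a restatement of \ref{fundcutpoint2} in terms of the constricted/wide terminology introduced earlier, with the only content being the (immediate) translation between "has a global cut-point" and the two definitions. The only care needed is to note that "not wide" means some cone has a cut-point, rather than a statement about all cones, so that the quantifier in the conclusion is correctly existential rather than universal.
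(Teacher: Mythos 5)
Your proposal is correct and is exactly the intended argument: the paper states \ref{notwide} without proof immediately after \ref{fundcutpoint} and \ref{fundcutpoint2}, since it follows by unwinding the definitions of constricted and wide and applying those results cone by cone (with the existential quantifier in the not-wide case, as you note). Nothing is missing.
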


\section{Groups with quasi-isometrically embedded subgroups}\label{section prairie}

\begin{defn}
A group is a \emph{prairie group} if all of its asymptotic cones are simply connected.
\end{defn}

\begin{lem}

The following groups are prairie groups.

\begin{enumerate}
    \item Nilpotent groups;
    \item Hyperbolic groups; and
    \item Groups with quadratic Dehn functions
        \begin{enumerate}
            \item $SL_n(\mathbb Z)$ for $n\geq 5$,
            \item Thompsons group $F$,
            \item Mapping class groups,
            \item $CAT(0)$ groups,
            \item Automatic groups,
            \item Baumslag-Solitar groups $BS_{pp}$, and many many others.
        \end{enumerate}

\end{enumerate}

\end{lem}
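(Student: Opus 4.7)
The plan is to dispose of the three families in turn by appealing to established theorems, reserving any nontrivial work for item (3).

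For nilpotent groups, I would invoke Pansu's theorem: every asymptotic cone of a finitely generated nilpotent group is isometric to the associated graded Carnot group equipped with a Carnot--Carath\'eodory metric. A Carnot group is a connected, simply connected Lie group, so every such asymptotic cone is simply connected.

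For hyperbolic groups, I would cite the classical fact that rescaling by $1/d_n$ collapses the $\delta$-thin triangle condition to $0$-thinness in the ultralimit, so every asymptotic cone is an $\mathbb R$-tree. Any $\mathbb R$-tree deformation retracts to a point along its unique geodesics, hence is contractible and a fortiori simply connected.

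For groups with quadratic Dehn function, the engine is the Gromov--Papasoglu theorem: a finitely presented group with quadratic isoperimetric function has simply connected asymptotic cones. Within the framework of this paper, the argument can be run as follows. Given a loop $\gamma$ in $\gcon$, \ref{limitloop} produces approximating loops $\gamma_n$ of length $O(d_n)$ in $\Gamma(G,S)$. Quadratic Dehn function furnishes van Kampen diagrams $\Delta_n$ filling $\gamma_n$ with area $O(d_n^2)$, and the companion linear isodiametric bound (Gersten) guarantees that the image of $\theta\co\Delta_n\to \Gamma^2(G,S)$ has diameter $O(d_n)$. Rescaling by $1/d_n$ and applying \ref{limitdisc} yields a continuous map of the disc into $\gcon$ bounding $\gamma$, so $\gcon$ is simply connected.

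With the three master results in hand, the subitems of (3) reduce to quoting known quadratic Dehn function estimates: Young for $SL_n(\mathbb Z)$ with $n\geq 5$; Guba for Thompson's group $F$; Mosher together with Behrstock--Drutu--Mosher (or Hamenst\"adt) for mapping class groups; the comparison-triangle argument for $\mathrm{CAT}(0)$ groups; Epstein et al.\ for automatic groups; and a Gersten-type computation for the Baumslag--Solitar groups $BS(p,p)$. The main obstacle is administrative rather than mathematical: confirming that each enumerated class genuinely satisfies the hypotheses of the master theorem, especially the more delicate cases like mapping class groups and Thompson's $F$ where the quadratic bound is itself a substantial theorem.
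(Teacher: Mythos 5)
Your proposal is correct and follows essentially the same route as the paper: the paper likewise disposes of the three families by citation, using Point's result that nilpotent groups have cones homeomorphic to $\mathbb R^n$ (where you cite Pansu's Carnot-group description, an equivalent conclusion), Gromov's result that hyperbolic cones are $\mathbb R$-trees, and Papasoglu's theorem for quadratic Dehn functions. The extra detail you supply (the sketch of Papasoglu's argument via \ref{limitloop} and \ref{limitdisc}, and the references for the subitems of (3)) goes beyond what the paper records but does not change the structure of the argument.
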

\begin{proof}
In \cite{Point}, Point shows that nilpotent groups have a unique asymptotic cone which is homeomorphic to $\mathbb R^n$ for some $n$.  Gromov showed that non-elementary hyperbolic groups have cones which are isometric to a universal $\mathbb R$-tree with uncountable branching at every point.  Papasolgu in \cite{pap} showed that if a group has a quadratic Dehn function then all of its asymptotic cones are simply connected.
\end{proof}

\begin{rmk}
In \cite{ConnerKent}, the author with Greg Conner note that such groups are uniformly locally simply connected; specifically, every loop of length $r$ bounds a disc of diameter at most $Kr$ where $K$ only depends on the group.  However, the discs are not necessarily Lipschitz.
\end{rmk}

\begin{lem}
There exists a finitely presented prairie group such that all of its asymptotic cones have uncountable Lipschitz fundamental group.
\end{lem}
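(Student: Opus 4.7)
Plan: The separation between the topological and Lipschitz fundamental groups of an asymptotic cone is driven by the gap between the Dehn function and the Lipschitz filling function of the group. The strategy is to exhibit a finitely presented group $G$ with quadratic Dehn function (so by Papasoglu's theorem every asymptotic cone of $G$ is simply connected, making $G$ a prairie group) whose Lipschitz filling function grows superlinearly. The preceding remark explicitly flags this phenomenon: the discs witnessing quadratic Dehn fillings are not in general Lipschitz, and examples exhibiting the requisite gap exist in the literature (e.g.\ variants of the Brady--Bridson or Olshanskii--Sapir constructions; the discrete Heisenberg group also provides a candidate, since its Carnot cone cannot be filled by uniformly Lipschitz discs).

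Fix such a $G$, together with a sequence of relators $r_n$ of length $n$ whose combinatorial fillings have area $O(n^2)$ but whose best Lipschitz filling constant $L_n$ satisfies $L_n\to\infty$. In an asymptotic cone $\gcon$ with basepoint $x_0$, for each $k\in\mathbb N$ I select representatives $(r_{n,k})_n$ of loops $\gamma_k$ based at $x_0$ of diameter $\approx 2^{-k}$ by choosing $|r_{n,k}|\sim 2^{-k}d_n$. Each $\gamma_k$ is topologically null-homotopic in $\gcon$ by Papasoglu. The key claim is that $\gamma_k$ is not Lipschitz-null-homotopic in $\gcon$: a Lipschitz null-homotopy would lift, via an ultralimit argument refining Lemma \ref{limitdisc} to track the Lipschitz constant, to a sequence of fillings of $r_{n,k}$ in $G$ with uniformly bounded Lipschitz constant, contradicting $L_{n,k}\to\infty$.

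To produce uncountably many Lipschitz homotopy classes, for each infinite $S\subset\mathbb N$ I form the loop $\mathrm{y}_S$ obtained by concatenating $\{\gamma_k : k\in S\}$ with Lipschitz connecting arcs whose lengths decay geometrically so that the total length is finite and the concatenation is itself Lipschitz (a Hawaiian-earring style construction inside the cone). Suppose $\mathrm{y}_{S_1}$ and $\mathrm{y}_{S_2}$ were Lipschitz-homotopic and pick $k_0\in S_1\triangle S_2$. A Phragm\'en-Brouwer cut argument, exactly parallel to the proof of Corollary \ref{fundcutpoint2} but applied in the Lipschitz category, carves out of the homotopy annulus a subdisc whose boundary maps to $\gamma_{k_0}$ alone and whose interior provides a Lipschitz null-homotopy of $\gamma_{k_0}$, contradicting the preceding step. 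Thus $\{\mathrm{y}_S\}_{S\subset\mathbb N}$ yields $2^{\aleph_0}$ distinct Lipschitz homotopy classes in every asymptotic cone of $G$.

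The main obstacle is the Lipschitz transfer principle invoked in the second paragraph: upgrading Lemma \ref{limitdisc} so that a Lipschitz map into the cone yields an approximating sequence of Lipschitz maps into $G$ with a controlled Lipschitz constant. This requires choosing representative discs with uniformly bounded mesh, and then controlling the Lipschitz constant edge-by-edge using the pointwise modulus of the Lipschitz null-homotopy in the cone. Once this refinement is in hand, together with a choice of $G$ certified to exhibit the quadratic-area versus superlinear-Lipschitz gap, the remaining steps follow the blueprint of Corollary \ref{fundcutpoint2} verbatim.
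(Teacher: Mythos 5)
Your proposal circles the right example but leaves the essential content unproved, whereas the paper's argument is a two-line citation: the discrete Heisenberg group $\pres{x,y,z}{z=[x,y],\,[x,z]=[y,z]=1}$ is nilpotent, hence a prairie group by Point's theorem (every asymptotic cone is homeomorphic to $\R^3$, so simply connected), and its asymptotic cone is the real Heisenberg group with a Carnot metric, whose Lipschitz fundamental group is already known to be uncountably generated by \cite[Theorem 4.10]{DHLT}. You do not need to manufacture uncountably many Lipschitz classes yourself; the paper simply quotes that this has been done for the cone in question.

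There are two concrete gaps in your route. First, your framing rests on finding a group with quadratic Dehn function whose Lipschitz filling constants blow up, but you never certify such a group, and your fallback candidate is inconsistent with that framing: the $3$-dimensional discrete Heisenberg group has \emph{cubic} Dehn function, so Papasoglu's theorem does not apply to it --- it is a prairie group only via the nilpotent/Point route. Second, the entire second and third paragraphs hinge on a ``Lipschitz transfer principle'' upgrading \ref{limitdisc} so that a Lipschitz null-homotopy in the cone pulls back to uniformly Lipschitz fillings in $G$; you correctly identify this as the main obstacle and then do not supply it. The subsequent Hawaiian-earring concatenation and the Phragm\'en--Brouwer cut in the Lipschitz category inherit this gap: cutting a Lipschitz annulus along a preimage component and collapsing the complement is a purely topological operation in \ref{fundcutpoint2}, and it is not automatic that the resulting map remains Lipschitz. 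As written, the proposal is a program rather than a proof; replacing all of it with the citation to the known computation of the Lipschitz fundamental group of the Heisenberg Carnot group closes every gap at once.
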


\begin{proof}The discrete Heisenberg group $\pres{x,y,z}{z=[x,y], [x,z]= [y,z]=1}$ is a nilpotent group and hence a prairie group.  In fact every asymptotic cone is homeomorphic to $\R^3$.  However, it is shown in \cite[Theorem 4.10]{DHLT} that the Lipschitz fundamental group of the real Heisenberg group isn't countable generated.

\end{proof}

The key to \ref{fundcutpoint} was that the homotopy between the two loops passed through a cut-point so we could ``cut" the homotopy off to build a null homotopy for one of the loops.  We will show that the same idea holds if the separating set is a highly connected set  instead of a point.  To do this we will require the following is well known covering lemma for open sets in the plain.  We provide a proof for completeness and to fix notation.

\begin{lem}\label{opensquares}\ulabel{opensquares}{Lemma}
Every bounded open set $U$ of $\R^2$ is the union of a null sequence of diadic squares with disjoint interiors.  In addition, the squares can be chosen such that if $A_i$ is the union of squares with side length at least $\frac{1}{2^i}$, then $U\backslash A_i \subset \mathcal N_{\frac{\sqrt 2}{2^{i-1}}}(\partial U)$.

\end{lem}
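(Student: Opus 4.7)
The plan is to carry out a standard Whitney-type decomposition. For each $i\in\Z$, let $\mathcal{D}_i$ denote the grid of closed dyadic squares of side length $1/2^i$, that is, squares of the form $[k/2^i,(k+1)/2^i]\times[l/2^i,(l+1)/2^i]$ for $k,l\in\Z$. Call such a square $Q$ \emph{good} if $Q\subset U$, and \emph{maximal good} if in addition the unique dyadic square of side $1/2^{i-1}$ containing $Q$ is not good. Let $\mathcal{F}$ be the collection of all maximal good squares, across all levels $i$; I claim that $\mathcal{F}$ is the required decomposition.

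First I would verify the three basic properties. For disjoint interiors: if $Q_1,Q_2\in\mathcal{F}$ have intersecting interiors then the nesting property of dyadic squares forces one to contain the other, say $Q_2\subsetneq Q_1$; but then the parent of $Q_2$ is contained in $Q_1\subset U$ and is hence good, contradicting maximality of $Q_2$. For the covering property $\bigcup\mathcal{F}=U$: given $p\in U$, openness of $U$ guarantees that any dyadic square at sufficiently high level whose closure contains $p$ lies in $U$, while boundedness of $U$ forces the dyadic ancestors of such a square eventually to leave $U$; taking the largest good ancestor yields a maximal good square with $p$ in its closure. Nullness of the sequence follows since $U$ is bounded and the squares have disjoint interiors, so only finitely many squares of side length exceeding any fixed $\epsilon>0$ can fit.

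For the neighborhood bound I would argue by contrapositive. Suppose $p\in U$ satisfies $\d(p,\partial U)\geq\sqrt{2}/2^{i-1}$. Since $U$ is open, $\Ball(p,\sqrt{2}/2^{i-1})\subset U$, and in particular the closed ball $\overline{\Ball}(p,\sqrt{2}/2^i)$ lies in $U$. Any square $Q\in\mathcal{D}_i$ whose closure contains $p$ has diameter $\sqrt{2}/2^i$, so $Q\subset\overline{\Ball}(p,\sqrt{2}/2^i)\subset U$, making $Q$ good. The maximal good square containing $Q$ then has side length at least $1/2^i$ and has $p$ in its closure, so $p\in A_i$, as required. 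The factor of $2$ between $\sqrt{2}/2^{i-1}$ and $\sqrt{2}/2^i$ is precisely what is needed to pass from the open ball (which is what the distance to $\partial U$ directly controls) to a closed ball large enough to contain a full dyadic level-$i$ square.

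The only mild wrinkle worth noting is that a point lying on dyadic grid lines belongs to the closures of several squares at a given level, rather than to the interior of a unique one. However, since $\mathcal{F}$ consists of closed squares and we only require disjoint \emph{interiors}, such a point may legitimately sit on the common boundary of several elements of $\mathcal{F}$, which causes no trouble in any of the arguments above. I do not anticipate a serious obstacle — the lemma is essentially a direct invocation of the dyadic Whitney construction.
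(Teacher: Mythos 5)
Your proof is correct and takes essentially the same approach as the paper: both are dyadic-square decompositions of $U$ at scales $1, \tfrac12, \tfrac14, \dots$, with the paper greedily selecting at each level all squares contained in the closure of what remains, while you take the equivalent Whitney-style collection of maximal dyadic squares contained in $U$. Your verification of the neighborhood bound (via the contrapositive and the diameter $\sqrt{2}/2^{i}$ of a level-$i$ square) is in fact more explicit than the paper's, which simply asserts that estimate.
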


\begin{proof}
Let $Q_i$ be a sequence of partitions of the plane with the Euclidean metric into closed square discs with side length $\frac{1}{2^{i}}$ such that $Q_i$ refines $Q_{i-1}$.  $Q_i$ can be chosen to be the set of squares with vertices $\bigl\{(\frac{j}{2^i},\frac{k}{2^i}),(\frac{j+1}{2^i},\frac{k}{2^i}),(\frac{j+1}{2^i},\frac{k+1}{2^i}),(\frac{j}{2^i},\frac{k+1}{2^i}) \ | \ j,k \in \mathbb Z\bigr\}$.

Let $D_0$ be the maximal subset of $Q_0$ such that $A_0 \subset U$ where $A_0 = \bigcup\limits_{s\in D_0} s$.  Then $U\backslash A_0 \subset \mathcal N_{\frac{\sqrt 2}{2^{-1}}}(\partial U)$.

We will inductively define $D_i$ and $A_i$ as follows.  Let $D_i$ be the maximal subset of $Q_i$ such that $\bigcup\limits_{s\in D_i} s\subset \overline{U\backslash A_{i-1}}$ where $\overline{U\backslash A_{i-1}}$ is the closure of $U\backslash A_{i-1}$.  Let $A_i = \bigl(\bigcup\limits_{s\in D_i}s\bigr)\cup A_{i-1}$.  We immediately have $U\backslash A_i \subset \mathcal N_{\frac{\sqrt 2}{2^{i-1}}}(\partial U)$.  Then $\bigcup\limits_{i=1}^\infty A_i = U$.

\end{proof}

\begin{defn}\label{modulus}\ulabel{modulus}{Definition}
Let $\xi\co\R^+ \to \R^+\cup\{\infty\}$ be a continuous function which vanishes at $0$. Then $\xi$ is a \emph{modulus of continuity} for $g \co (X, \d_X)\to (Y, \d_Y)$, if $\d_Y\bigr(g(x), g(y)\bigl)\leq \xi\bigr(\d_X(x,y)\bigl)$ for all $x,y \in X$.

Let $(X,\d)$ be a path connected metric space and $\zeta\co\mathbb R^+ \to \mathbb R^+\cup \{\infty\}$ be an increasing function.  We will say that $\zeta$ is a \emph{modulus of path-connectivity} for $(X,\d)$; if every pair of points $x,y\in X $ there exists a path $\alpha$ from $x$ to $y$ such that $\diam{\alpha}\leq \zeta\bigl(\d(x,y)\bigr)$.  If $X$ is geodesic than the identity function is a modulus of path-connectivity for $(X,\d)$.
\end{defn}

\begin{rmk}
Let $g \co (X, \d_X)\to (Y, \d_Y)$ be a continuous function on a compact metric space $X$.  Then $\xi(r) = \sup\bigl\{\d_Y\bigl(g(x),g(y)\bigr) \ | \ \d_X(x,y)\leq r \bigr\}$ is a modulus of continuity which is finite for every $r$.  If $\xi'$ is another modulus of continuity for $g$, then $\xi'(r)\geq \xi(r)$.

Let $(X, \d)$ be a path connected space.  Then there exists a modulus of path-connectivity for $X$ which vanishes at $0$ if and only if $X$ is uniformly locally path connected.
\end{rmk}

\begin{lem}\label{cut1}\ulabel{cut1}{Lemma}
Suppose that $X$ is a metric space containing a closed, simply connected, uniformly locally path connected and uniformly locally simply connected subset $E$.  If $h\co A \to X$ is a continuous map from a planar annulus such that $h^{-1}(E)$ separates the boundary components of $A$, then $h$ takes the boundary components of $A$ to null homotopic loops in $X$.
\end{lem}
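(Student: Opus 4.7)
My plan is as follows. Following the proof of Proposition \ref{fundcutpoint}, I would first use the Phragm\'en--Bower properties of the plane to extract a single connected component $C$ of $h^{-1}(E)$ that already separates the two boundary components $\gamma_1,\gamma_2$ of $A$. Write $A = \overline{A_1}\cup C \cup \overline{A_2}$, where $A_1,A_2$ are the two components of $A\setminus C$ with $\gamma_i\subset \overline{A_i}$. By the symmetry of the situation, it suffices to produce a null homotopy of $h|_{\gamma_1}$ in $X$; the plan is to modify $h$ on the far side of $C$ so that the modified map carries $\overline{A_2}$ entirely into $E$, where the hypotheses on $E$ can be used.

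Concretely, I would build a continuous map $\tilde h\co A\to X$ satisfying $\tilde h = h$ on $\overline{A_1}$ and $\tilde h(\overline{A_2})\subset E$. To define $\tilde h$ on $A_2$, I would apply \ref{opensquares} to write $A_2$ as a union of dyadic squares with disjoint interiors whose ``layer $i$'' squares lie within $\tfrac{\sqrt 2}{2^{i-1}}$ of $\partial A_2\subset C$. Viewing this as a CW structure on $A_2$, $\tilde h$ is constructed skeleton by skeleton: at each vertex $v$ of a square choose a nearby point $c_v\in C$ and set $\tilde h(v)=h(c_v)\in E$; extend across each edge using the modulus of path-connectivity of $E$ to produce a short path in $E$; and fill each square using the uniform local simple connectedness of $E$ to bound the resulting small loop by a disk in $E$ of controlled diameter. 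On $\overline{A_1}$ I simply set $\tilde h = h$.

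The main obstacle I expect is verifying continuity of $\tilde h$ along $C$, where infinitely many squares accumulate and the $A_1$--piece meets the $A_2$--piece. The side-length estimates from \ref{opensquares}, together with the moduli of continuity of $h$ (uniform on the compact annulus $A$), of path-connectivity of $E$, and of simple connectedness of $E$, should all tend to zero together as the layer index $i\to\infty$. Carefully composing these estimates--in the spirit of the bookkeeping in the proof of \ref{limitloop}--shows that $\tilde h(x)$ approaches $h(y)$ whenever $x\in A_2$ approaches $y\in C$, which combined with continuity of $h$ on $\overline{A_1}\cup C$ gives continuity of $\tilde h$ everywhere.

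Finally, since $\tilde h|_{\gamma_2}$ is a loop in the simply connected space $E$, it bounds a disk $F\co D^2\to E\subset X$. Gluing $F$ to $\tilde h$ along $\gamma_2$ extends $\tilde h$ to a continuous map from the closed topological disk bounded by $\gamma_1$ into $X$ whose restriction to that disk's boundary is $h|_{\gamma_1}$; this realises $h|_{\gamma_1}$ as null-homotopic in $X$. The symmetric construction (swapping the roles of $\gamma_1$ and $\gamma_2$, and using $A_1$ in place of $A_2$) gives that $h|_{\gamma_2}$ is null-homotopic as well, completing the proof.
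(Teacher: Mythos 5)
Your overall strategy is the paper's: isolate a single separating component $C$ of $h^{-1}(E)$ via the Phragm\'en--Brouwer properties, redefine the map on the region cut off by $C$ so that it lands in $E$ using the dyadic-square covering of \ref{opensquares} together with the moduli of path-connectivity and of local simple connectedness, and check continuity along $C$ by the shrinking-estimates bookkeeping. However, there is one concrete error in your set-up: you apply \ref{opensquares} to $A_2$, the component of $A\backslash C$ containing $\gamma_2$, and you assert that $\partial A_2\subset C$. That containment is false: $\gamma_2$ itself lies in the frontier of $A_2$ in $\R^2$ (and if instead you take the frontier relative to $A$, then $A_2$ is not an open subset of $\R^2$ and \ref{opensquares} does not apply). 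Consequently the squares of layer $i$ accumulate not only on $C$ but also on $\gamma_2$, where there is \emph{no} nearby point of $C$ --- the distance from $\gamma_2$ to $C$ is bounded below by a positive constant. Your recipe $\tilde h(v)=h(c_v)$ with $c_v\in C$ ``nearby'' then assigns to adjacent tiny squares near a point of $\gamma_2$ values $h(c_v)$ coming from points of $C$ that need not be close to each other, the connecting paths in $E$ are no longer short, and $\tilde h$ is neither well defined nor continuous on $\gamma_2$; in particular the loop $\tilde h|_{\gamma_2}$ that you want to cap off in $E$ does not exist as described.

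The repair is exactly the choice the paper makes: instead of $A_2$, take $U$ to be the component of $\mathbb D\backslash C$ containing the inner boundary circle, where $\mathbb D$ is the full disc bounded by $\gamma_1$. This $U$ is a bounded open subset of $\R^2$ (it absorbs the inner disc), its frontier genuinely satisfies $\partial U\subset C$, so every vertex of a small square really is close to $C$ and the closest-point values $h(\iota(v))$ converge correctly; filling $U$ then simultaneously pushes the far side into $E$ and caps off $\gamma_2$, so your separate gluing of a disc $F$ in $E$ becomes unnecessary. With that substitution the rest of your outline (extension over the $1$-skeleton by short paths, filling squares by small discs, and the continuity estimate combining $\xi$, $\zeta$, and the $\sqrt2/2^{i-1}$ bound) matches the paper's proof.
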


\begin{proof}
Let $A = \{(x,y)\in \R^2 \ | \ \frac14\leq x^2 + y^2\leq 1\}$ and $\mathbb D$ be the unit disc in the plane.  It is enough to show that the outer boundary of $A$ maps to a null homotopic loop.  Since $h^{-1}(E)$ separates the boundary components of $A$, a component $C$ of $h^{-1}(E)$ separates the boundaries components of $A$.  This follows from the Phragm\'{e}n-Brouwer properties, see\cite{hw}. Let $U$ be the component of $\mathbb D\backslash C$ which contains the circle of radius $\frac12$.  Thus $\partial U\subset A$ and $h(\partial U)\subset E$.  Let $\xi$ be a modulus of continuity for $h$.

We can decompose $U$ as a null sequence of diadic squares with disjoint interiors, as in \ref{opensquares}.  As before, let $A_{i}$ be the union of squares with side length at least $\frac{1}{2^i}$ which are contained in $U$ and $D_i$ the set of squares in $A_i$ of side length $\frac{1}{2^i}$.  Then $\bigcup\limits_{i=1}^\infty D_i$ induces a cellular structure on $U$.  We will use $U^{(i)}$ to denote the $i$-skeleton of this cellular structure on $U$.  Note this implies that a side of a square in $D_i$ is not necessarily an edge but is an edge path.

We will now define a continuous map $g\co\mathbb D\to X$ such that $g|_{\mathbb D\backslash U} = h$.  If the boundary of $U$ is a loop, then this is obvious.  However, the boundary does not have to be a loop.  It can be very complicated (consider the Warsaw circle).

Let $\iota\co U \to \partial U$ be a closest point projection map (which in general will be discontinuous), i.e. any map such that $ \d(x,\iota(x))\leq \d(x,z)$ for all $z'\in \partial U$.  For every $x\in U^{(0)}$, let $g(x) = h(\iota(x))$.

\begin{clm*}
If $x\in U^{(0)}\backslash A_i$ and $y\in\partial U$, then $\d\bigl(g(x), g(y)\bigr)\leq \xi\bigl(\d(x,y)+ \frac{\sqrt{2}}{2^{i-1}}\bigr)$.
\end{clm*}

If $x \in U^{(0)}\backslash A_i$ and $y\in\partial U$, then $\d\bigl(x, \iota(x)\bigr)\leq \frac{\sqrt{2}}{2^{i-1}}$.  Thus $\d\bigl(\iota(x), y \bigr)\leq \d(x,y)+\frac{\sqrt{2}}{2^{i-1}}$ and the claim follows.

We now wish to extend $g$ continuously to $\mathbb D\backslash U\cup U^{(1)}$.  Let $\zeta\co\mathbb R^+ \to R^+\cup\{\infty\}$ be a modulus of path-connectivity of $X$ which vanishes at $0$.  Then there exists a $\eta>0$ such that $\zeta(t)<\infty$ for all $t<\eta$.  Suppose that $e$ is an edge of $U^{(1)}$ with vertices $x,y$ such that $\d\bigl(g(x),g(y)\bigr)<\eta$.  Then there exists a path $\alpha_{x,y}$ in $X$ from $g(x)$ to $g(y)$ such that $\diam{\alpha_{x,y}}\leq \zeta\bigr(\d(g(x),g(y)\bigl)$.  We may extend $g$ by sending $e$ to $\alpha_{x,y}$.  Repeating this for all sufficiently short edges of $U^{(1)}$ and sending the other edges to any path between their end points, we can extend $g$ to $\mathbb D\backslash U\cup U^{(1)}$.

\begin{clm*}
$g\co \mathbb D\backslash U\cup U^{(1)} \to X$ is continuous.
\end{clm*}

Suppose that $x_n$ is a sequence of points in $U^{(1)}$ such that $x_n \to x_0$.  If $x_0\not\in \partial U$, then $x_n$ is eventually contained in $A_i$ for some $i$ and $g(x_n)\to g(x_0)$ by the Pasting Lemma for continuous functions (see \cite{munkres}).

If $x_0 \in \partial U$, then we can choose $x_n'$ such that $x_n, x_n'$ are contained in a single edge of $D^{(1)}$ and $x_n' \in U^{(0)}$.  As well we may assume that, $x_n$ is contained in a sufficiently short edge (so as to assume the length condition holds on the edge).  Since $x_n$ converges to $\partial U$,  for every $i$ there exists an $N_i$ such that $x_n \in  U^{(1)}\backslash A_i$ for all $n> N_i$.  Then $\d(x_n, x_n') \leq \frac{1}{2^{i}}$ for all $n>N_i$.  Thus $\d\bigl(g(x_n), g(x_n')\bigr) \leq \zeta\bigl(\xi(\frac{1}{2^i})\bigr)$ for all $n>N_i$.  As well, $\d(x_n, x_n') \leq \frac{1}{2^{i}}$ for all $n>N_i$ implies that $x_n'$ converges to $x_0$.

Then \begin{align*}\d(g(x_0), g(x_n)) &\leq  \d(g(x_0), g(x_n'))+ \d(g(x_n'), g(x_n)) \\ & \leq \xi\bigl(\d(x_0,x_n')+ {\sqrt{2}}/{2^{i-1}}\bigr)+ \zeta\bigl(\xi({1}/{2^i})\bigr)\end{align*}
for all $n\geq N_i$.  Thus $g|_{\mathbb D\backslash U \cup U^{(1)}}$ is continuous which completes the second claim.

Let $\epsilon_i = \max\limits_{s\in D_i} \{\diam{g(\partial s)} \}$ which is necessarily finite for all $i$.   Since $g|_{\mathbb D\backslash U \cup U^{(1)}}$ is continuous, $\epsilon_i$ converges to $0$.  Since $E$ is simply connected and uniformly locally simply connected, there exists $\delta_i$ such that for every $s\in D_i$ $g(\partial s)$ bounds a disc with diameter at most $\delta_i$ where $\delta_i \to 0$ as $i\to \infty$.

Fix $i>0$ and $s\in D_i$.  Then we can extend $g$ to all of $s$ by extended $g|_{\partial s}$ to a disc with diameter at most $\delta_i$.

By doing this process for all $s\in \bigcup_{i\geq0} D_i$, we can extend $g$ to all of $\mathbb D$.  Repeating the argument from the second claim and using the fact that $\delta_i\to 0$, we can see that this extension is continuous.

\end{proof}

An interesting related proposition is the following van Kampen type result for fundamental groups.
\begin{prop}

Suppose that $X= U\cup V$ is a connected metric space  and $U\cap V$ is a non-empty, closed, simply connected, uniformly locally path connected and uniformly locally simply connected.  Then for $x_0\in U\cap V$, $\pi_1(V, x_0)*\pi_1(U, x_0)$ canonically embeds into $\pi_1(X, x_0)$.

\end{prop}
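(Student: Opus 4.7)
The plan is to mirror the classical van Kampen argument, with Lemma \ref{cut1} playing the role that openness of $U,V$ plays in the standard proof. Let $E = U\cap V$ and write $\phi\co \pi_1(U,x_0)*\pi_1(V,x_0)\to \pi_1(X,x_0)$ for the natural homomorphism induced by inclusions; I must show $\phi$ is injective. Suppose, aiming at a contradiction, that $w = [\alpha_1]*\cdots*[\alpha_n]$ is a reduced word with each $[\alpha_i]$ nontrivial in its factor (so consecutive $\alpha_i$'s lie in different factors) and $\phi(w)=0$. Then the concatenation $\gamma = \alpha_1*\cdots*\alpha_n$ admits a null-homotopy $h\co \mathbb D\to X$. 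Mark $p_0,\ldots,p_{n-1}\in\partial\mathbb D$ so that the arc $a_i$ from $p_{i-1}$ to $p_i$ parameterizes $\alpha_i$; each $h(p_i)=x_0\in E$. Set $K = h^{-1}(E)\subset \mathbb D$, a closed set containing every $p_i$.

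The crux of the argument is to produce, for some $i$, a Jordan arc $\beta\subset\mathbb D$ from $p_{i-1}$ to $p_i$ and a continuous modification $h'$ of $h$ such that $h'(\beta)\subset E$, $h' = h$ on $a_i$, and $h'(D_i)\subset U$ (say), where $D_i$ is the Jordan domain bounded by $a_i\cup\beta$ and $U$ is the factor containing $\alpha_i$. Given this, $h'|_{D_i}$ is a null-homotopy in $U$ of the loop $\alpha_i*\overline{h'|_\beta}$. Since $h'|_\beta$ has image in the simply connected $E\subset U$, it is rel-endpoints null-homotopic in $U$, and concatenation yields a null-homotopy of $\alpha_i$ in $U$, contradicting $[\alpha_i]\neq 0$ in $\pi_1(U)$.

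To produce $(\beta, h')$, I would first observe that (under the implicit hypothesis that $U,V$ are open in $X$, so that $X\setminus E$ decomposes as the disjoint union of the open sets $U\setminus V$ and $V\setminus U$) each component of $\mathbb D\setminus K$ maps into exactly one of these two sets; call them type-$U$ and type-$V$ components. A Phragm\'{e}n--Brouwer argument (cf.\ \cite{hw} and the proof of Lemma \ref{cut1}) then supplies a connected component $C$ of $K$ and an index $i$ such that the $a_i$-side of $\mathbb D\setminus C$ meets $\partial\mathbb D$ exactly in $a_i$ and contains only type-$U$ components. Next, I apply the dyadic square decomposition of Lemma \ref{opensquares} together with the uniform local path-connectivity and uniform local simple-connectivity of $E$, exactly as in the proof of Lemma \ref{cut1}, to modify $h$ on the \emph{other} side of $C$ so that it takes values in $E$; suitably smoothing the frontier of this modification yields the desired Jordan arc $\beta$.

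The main obstacle is this last construction. Components of $K$ can be wild compact connected sets with complicated complements, and ensuring that a single ``innermost'' separating component isolates a single arc $a_i$ of the correct type requires care. In general one may have to iterate, peeling off one arc $a_i$ at a time and inducting on the number of arcs remaining; at each step, the technical content is essentially the continuous extension procedure from Lemma \ref{cut1}, applied locally. Once every arc has been peeled off, each $\alpha_i$ is null-homotopic in its factor, contradicting the reducedness of $w$ and so establishing that $\phi$ is injective.
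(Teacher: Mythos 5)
Your overall strategy is the right one --- localize the null-homotopy to a subregion of $\mathbb D$ containing a single arc $a_i$, arrange that the extra boundary lands in $E=U\cap V$, fill in using the machinery of \ref{cut1}, and kill the extra boundary using simple connectivity of $E$ --- but the step you yourself flag as the ``main obstacle'' is a genuine gap, and it is precisely the point where the paper's argument diverges from yours. You cut along components of $K=h^{-1}(E)$ and then need a Phragm\'en--Brouwer argument to find a single component of $K$ that isolates exactly one arc $a_i$ together with only type-$U$ complementary components; as you note, components of $K$ can be wild, a single arc $a_i$ may meet many components of $\mathbb D\setminus K$, and it is not clear that any one component of $K$ does the required isolating. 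The paper avoids this entirely by taking $C$ to be the component of $h^{-1}(U)$ (the whole factor, not just $E$) containing the arc $a_1$: since $h(a_1)\subset U$ and $a_1$ is connected, $a_1$ lies in a single such component automatically, no separation argument is needed, and (when $U$ and $V$ are closed) every frontier point of $C$ in $\mathbb D$ is forced into $h^{-1}(U\cap V)$ --- any neighborhood of such a point meets both $C$ and $h^{-1}(V\setminus U)$. One then redefines $h$ on $\partial\mathbb D\setminus C$ to map into $E$ and extends over $\mathbb D\setminus C$ exactly as in \ref{cut1}. If you reorganize your proof around components of $h^{-1}(U)$ rather than of $h^{-1}(E)$, the iteration and ``peeling'' you gesture at becomes unnecessary.

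A second, smaller but real, problem is your parenthetical hypothesis that $U$ and $V$ are open: combined with the stated hypotheses that $U\cap V$ is closed and non-empty and that $X$ is connected, openness of $U$ and $V$ would force $U\cap V=X$ and make the proposition vacuous. What the argument actually needs (and what the paper implicitly uses, consistent with its applications where $U$ and $V$ are closed halves of a cone) is that $U$ and $V$ are closed, so that $U\setminus V$ and $V\setminus U$ are relatively clopen in $X\setminus E$ and the frontier of a component of $h^{-1}(U)$ maps into $E$. With those two repairs --- closedness in place of openness, and cutting along $h^{-1}(U)$ in place of $h^{-1}(E)$ --- your argument becomes essentially the paper's.
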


If both of $U,V$ are not locally simply connected at $x_0$, then the homomorphism will not necessarily be a surjection.  In fact $\pi_1(X, x_0)\backslash \bigl(\pi_1(V, x_0)*\pi_1(U, x_0)\bigr)$ will often be uncountable.

\begin{proof}
Suppose that $f_i \co (I,0,1) \to (V,x_0, x_0)$ and $g_j\co (I,0,1) \to (U, x_0,x_0)$ are essential loops such that the loop $f_1*g_1* \cdots *f_n*g_n$ is null homotopic in $X$.  Let $h\co \mathbb D \to X$ be a null homotopy and $C$ a component of $h^{-1}(U)$ containing the portion of $\partial D$ which maps to $f_1$.  Since $U\cap V$ is path connected and locally path connected, we can define a map $h'\co C\cup \partial \mathbb D \to U$ such that $h'|_C = h$ and $h'(\partial \mathbb D \backslash C ) \subset U\cap V$.  Then as in \ref{cut1} $h'$ can be extended to a null homotopy of $f_1$ which contradicts the assumption that $f_1$ was an essential loop.
\end{proof}

We will use Olshanskiy's definitions from \cite{Obook} for a \emph{0-refinement of a van Kampen diagram}, \emph{0-edges and 0-cells}, a \emph{cancelable pair} in a van Kampen diagram, a \emph{copy} of a cell under 0-refining, and \emph{reduced diagrams}.  Our definitions of $M$-bands, medians, and boundary paths of $M$-bands will follow that of \cite{OSchord}.

\begin{defn}[$M$-bands]\label{M-bands} Let $M\subset S\cup\{1\}$ where $1$ is the empty word is $S\cup S^{-1}$ and $\Delta$ be a van Kampen diagram over $\pres SR$.  An $M$-edge is an edge in $\Delta$ or $\Gamma(G,S)$ labeled by an element of $M$.  An \emph{$M$-band $\mathcal T$} is a sequence of cells $\pi_1,...,\pi_n$ in a van Kampen diagram over $\pres{S}{R}$ such that

\begin{enumerate}[(i)]
    \item every two consecutive cells $\pi_i$ and $\pi_{i+1}$ in this sequence have a common $M$-edge $e_i$ and
    \item every cell $\pi_i$, $i=1,...,n$ has exactly two $M$-edges, $e_{i-1}$ and $e_i$.
\end{enumerate}

\end{defn}

Consider lines $l(\pi_i, e_i)$ and $l(\pi_i, e_{i-1})$ connecting a point inside the cell $\pi_i$ with midpoints of the $M$-edges of $\pi_i$. The broken line formed by the lines $l(\pi_1,e)$, $\cdots$, $l(\pi_i,e_i)$, $l(\pi_i,e_{i-1})$, $\cdots$, $l(\pi_n,e_n)$ is called the \emph{median} of the band $\mathcal T$ and will be denoted by $ m( \mathcal T)$. It connects the midpoints of each $M$-edge and lies inside the union of $\pi_i$. We say that an $M$-band is an \emph {$M$-annulus}, if $\pi_1$ and $\pi_n$ share an $M$-edge.  If $\mathcal T$ is an $M$-annulus, then the edges $e_1$ and $e_n$ coincide and $m(\mathcal T)$ is a simple closed curve. An $M$-band $\t$ will be \emph{reduced} if no two consecutive cells are inverse images of each other.

Each cell $\pi_i$ of an $M$-band $\t$ can be viewed as an oriented 4-gon with edges $e_{i-1}$,$p_i$, $e_i$, $q_i$ where $e_{i-1}, e_i$ are $M$-edges of $\pi_i$; $p_i$ begins at the initial vertex of $e_{i-1}$ and ends at the initial vertex of $e_i$; and $q_i$ begins at the terminal vertex of $e_{i-1}$ and ends at the terminal vertex of $e_i$.  Then $p_1p_2\cdots p_n$ and $q_1 q_2\cdots q_n$ edge paths in $\Delta$ which we will refer to as the \emph{combinatorial boundary paths of $\t$} and denote by $\topc{\t}$, $\botc{\t}$ respectively.  However, the combinatorial boundary paths can have backtracking in the diagram.  The \emph{(topological) boundary paths of $\t$} are  subpaths of $\topc{\t}$ and $\botc{\t}$ obtained by removing all maximal subpaths consisting entirely of backtracking and will be denoted by $\top\t$ and $\bot\t$ respectively.  While a topological boundary path has no backtracking, its label is not necessarily freely reduced. It is also possible that one of $\top\t$ and $\bot\t$ is empty.

Let $\mathcal T$ be a $M$-annulus in a circular diagram $\Delta$. $\mathcal T$ is a \emph{minimal $M$-annuli}, if there are no $M$-annuli contained in the bounded component of $\R^2\backslash m(\mathcal T)$ where $\Delta$ is considered as a subset of $\R^2$.  $\mathcal T$ is said to be a \emph{maximal $M$-annulus} in $\Delta$ if it is not contained in the bounded component of $\R^2\backslash m(\mathcal T')$ for any other $M$-annulus $\mathcal T'$ in $\Delta$.  For a more complete description of $M$-bands and their boundaries see \cite{OSchord}.

\begin{defn}
Let $G_e$ be an HNN extension of a group $ \pres {A}{R'}$ with finitely generated associated subgroups.  Then $G_e$ has a presentation
$$\pres {A,t}{R' \cup\{u^{t}_{i} = v_{i} \}_{i=1}^k}$$
where $\{u_{1}, . . . , u_{k}\}, \{ v_{1}, . . . , v_{k}\}$ are generating sets for the associated subgroups $H_e =\langle u_i\rangle, K_e = \langle v_i\rangle$.

Let $G_a$ be an amalgamated product of groups $ \pres {A_1}{R_1}$ and $ \pres {A_2}{R_2}$ along $\phi\co H_1 \to H_2$ where $H_i$ is a finitely generated subgroup of $\pres {A_i}{R_i}$.  Then $G_a$ has a presentation
$$\pres {A_1, A_2}{R_1, R_2 \cup\{u_{i} = \phi(u_{i}) \}_{i=1}^k}$$
where $\{u_{1}, . . . , u_{k}\}$ is a generating set for the associated subgroup $H_1$.
\end{defn}

We will fix the groups $G_e$ and $G_a$ and their presentations for the remainder of Section \ref{section prairie}.

\begin{defn}
Let $H$ be a subgroup of a group $G$ generated by $S$ and $Z, Z'$ be subsets of $\Gamma(G,S)$.  We will say that $Z,Z'$ are \emph{$H$-separated} if there exists $g\in G$ such that $Z, Z'$ are contained in distinct components of $\Gamma(G,S)\backslash gH$ where $gH$ is the set of vertices of $\Gamma(G,S)$ labeled by elements from the coset $gH$.  \end{defn}

\begin{lem}
Let $H$ be a subgroup of a group $G$ generated by $S$. The property of being $H$-separated is invariant under the left action of $G$ on $\Gamma(G,S)$.
\end{lem}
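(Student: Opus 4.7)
The plan is to exploit the fact that left multiplication by any element $g_0 \in G$ acts on $\Gamma(G,S)$ by a label-preserving graph automorphism, and therefore takes complements of vertex sets to complements of the image vertex sets while permuting connected components bijectively. The only additional input needed is that left multiplication sends cosets to cosets of the same subgroup: $g_0 \cdot (gH) = (g_0 g) H$.

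Concretely, suppose $Z, Z'$ are $H$-separated, witnessed by $g \in G$ so that $Z$ and $Z'$ lie in distinct connected components $C, C'$ of $\Gamma(G,S) \setminus gH$. Applying the automorphism induced by $g_0$, we get that $\Gamma(G,S) \setminus (g_0 g) H$ is carried onto itself (after relabeling vertices) and its connected components are exactly the images $g_0 \cdot C$ for $C$ a component of $\Gamma(G,S) \setminus gH$. Since $g_0 \cdot C$ and $g_0 \cdot C'$ remain distinct components, and since $g_0 Z \subset g_0 \cdot C$ and $g_0 Z' \subset g_0 \cdot C'$, the element $g' := g_0 g$ witnesses that $g_0 Z$ and $g_0 Z'$ are $H$-separated.

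I do not anticipate any real obstacle: the only minor point worth being explicit about is that left multiplication by $g_0$ is a graph automorphism of $\Gamma(G,S)$, which follows from the standard identification of $G$ with the vertex set and the fact that edges are determined by right multiplication by elements of $S$. The identity $g_0 \cdot (gH) = (g_0 g) H$ is immediate from the definition of a left coset. Thus the lemma reduces to one line once the action is recognized as an isometry compatible with the coset structure.
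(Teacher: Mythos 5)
Your proof is correct and is exactly the standard argument the paper has in mind; the paper states this lemma without proof precisely because the observation that left multiplication is a label-preserving automorphism of $\Gamma(G,S)$ carrying the coset $gH$ to $(g_0g)H$, and hence permuting the components of the complement, is considered immediate. Nothing is missing.
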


\begin{lem}\label{etranslates}\ulabel{etranslates}{Lemma}
Suppose that $H_e$ or $K_e$ is a proper subgroup of $ \pres {A}{R'}$.  Let $\gamma$ be a loop in $\Gamma(G_e, S_e)$ and $N> \diam {\gamma}$.  Then there exists elements $\{g_1,\cdots, g_N\}$ in $G_e$ such that
\begin{enumerate}[(i)]
    \item $g_i\cdot\gamma, g_j\cdot\gamma$ are $H$-separated  for $H\in \{H_e, K_e\}$ and

    \item $|g_ig_j^{-1}|\geq 2N$ and $|g_i|\leq 4N$ for all $i\neq j$.
\end{enumerate}

\end{lem}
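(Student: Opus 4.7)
The plan is to exploit the action of $G_e$ on its Bass--Serre tree $T$. Vertices of $T$ are the cosets $gA$ of the base group, the vertex $gA$ is adjacent to $gwtA$ and $gwt^{-1}A$ for every $w\in A$, and every vertex has valence $[A:H_e]+[A:K_e]$. Without loss of generality $H_e$ is proper in $A$, so $[A:H_e]\geq 2$ and this valence is at least $3$; moreover some generator $a$ of $A$ must then lie outside $H_e$ (otherwise every generator of $A$ is in $H_e$, forcing $H_e=A$), which gives us an $a$ with $|a|=1$ in $\Gamma(G_e,S_e)$. The key geometric fact that I will use is that each tree-edge $(gA,gtA)$ of $T$ corresponds to the $H_e$-coset $gH_e\subset gA$ (the vertices of $gA$ from which a $t$-edge departs) and simultaneously to the $K_e$-coset $gtK_e\subset gtA$, and that deleting either of these cosets from $\Gamma(G_e,S_e)$ disconnects the Cayley graph into the two sides of the edge. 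This is Britton's lemma in Cayley-graph form: any path crossing the tree-edge must use a $t^{\pm 1}$-letter whose $A$-endpoint is in $gH_e$.

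To construct the $g_i$, I encode binary sequences into $T$. For each $\epsilon=(\epsilon_1,\ldots,\epsilon_d)\in\{0,1\}^d$, set $g_\epsilon=a^{\epsilon_1}t\,a^{\epsilon_2}t\cdots a^{\epsilon_d}t$. Each choice $\epsilon_i\in\{0,1\}$ picks between the two distinct $t$-up neighbors $tA$ and $atA$ of the current vertex (distinct because $a\notin H_e$), so the $g_\epsilon$ trace distinct geodesics of length $d$ in $T$. Taking $d=N+\lceil\log_2 N\rceil+3$, the binary tree is bushy enough to select $N$ strings $\epsilon^{(1)},\ldots,\epsilon^{(N)}$ whose pairwise lowest common ancestors lie at depth at most $d-N-2$, so that $d_T\bigl(g_{\epsilon^{(i)}}A,g_{\epsilon^{(j)}}A\bigr)\geq 2N+4$; then set $g_i:=g_{\epsilon^{(i)}}$. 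Small values of $N$ admit direct hand-constructions using valence $\geq 3$.

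Verification is now routine. The word length satisfies $|g_i|\leq 2d\leq 4N$ once $N$ is not too small, and since the tree metric is dominated by the Cayley metric (each tree-step consumes a $t^{\pm 1}$-letter), $|g_ig_j^{-1}|\geq d_T(g_iA,g_jA)\geq 2N$, giving (ii). For (i), fix $i\neq j$ and choose an edge $e$ on the $T$-geodesic from $g_iA$ to $g_jA$ whose tree-distance from each endpoint is at least $N+1$; such an $e$ exists because the geodesic has length at least $2N+4$. Writing $e=(gA,gtA)$ with $g_iA$ on the $gA$-side, $e$ corresponds to the cosets $gH_e$ and $gtK_e$. Since $\diam{\gamma}<N$, the translate $g_i\gamma$ lies in the Cayley-distance $N$-neighborhood of $g_i$, hence within tree-distance $N$ of $g_iA$, and so entirely on the $g_iA$-side of $e$; symmetrically for $g_j\gamma$. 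Deleting $gH_e$ (respectively $gtK_e$) therefore separates $g_i\gamma$ from $g_j\gamma$ in $\Gamma(G_e,S_e)$, witnessing both $H_e$- and $K_e$-separation with the same tree-edge.

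The main obstacle is the separation fact asserted in the first paragraph: that deleting a single coset $gH_e$ (rather than a thickened neighborhood of it) genuinely disconnects the Cayley graph into the two tree-sides. This is where the HNN structure enters essentially, via Britton's lemma applied to crossings of the tree-edge. The remaining content is careful bookkeeping comparing the tree metric with the Cayley metric and tracking on which side of the chosen tree-edge each translate lies; the analogous argument for amalgamated products, and the symmetric case in which $K_e$ rather than $H_e$ is proper, require only cosmetic changes.
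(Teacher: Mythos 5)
Your proof is correct in substance and rests on the same engine as the paper's: Britton's lemma, in the form that a single coset of an associated subgroup attached to a ``deep'' $t$-edge separates the Cayley graph, so that translates pushed at least $N+1$ $t$-levels apart on either side of such an edge are $H$-separated. Where you differ is in the construction of the $g_i$. The paper simply takes $a\in A\backslash K_e$ and sets $g_i\equiv t^{N}(ta)^i t^{-N}$; since these words (and the quotients $g_ig_j^{-1}=g_{i-j}$) have no pinches, the length bounds are immediate and the separating coset can be written down explicitly as a translate of $t^{N+1}K_e$, with the whole argument fitting in a few lines. Your Bass--Serre binary-encoding scheme with $g_\epsilon=a^{\epsilon_1}t\cdots a^{\epsilon_d}t$ also works, and it has two small advantages: you note explicitly that $a$ can be taken to be a generator (which is what actually makes $|g_i|\le 4N$ hold, a point the paper glosses over), and your single tree-edge visibly witnesses both the $H_e$- and the $K_e$-separation at once, whereas the paper only spells out the $K_e$ case. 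The cost is the extra bookkeeping: the $d=N+\lceil\log_2N\rceil+3$ budget forces $2d\le 4N$ to fail for $N\le 5$, and your appeal to ``direct hand-constructions'' there is a genuine, if minor, loose end --- one you could close by simply switching to the paper's words $t^N(ta)^it^{-N}$, which satisfy all the bounds for every $N$. Both proofs share the same implicit assumption that $\gamma$ passes through (or within bounded distance of) the identity, so that $g_i\cdot\gamma\subset B(g_i,N)$; neither states it, and it is harmless in the intended application.
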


\begin{proof}Without loss of generality, we will assume $K_e$ is a proper subgroup.  Let $\gamma$ and $N$ be as in the statement of the lemma.  Choose $a\in \pres {A}{R'}\backslash K_e$ and let $g_i \equiv t^{N}(ta)^i t^{-N}$.  Notice that $g_i$ has no pinches for any $i\in\Z$ and $g_ig_j^{-1} = g_{i-j}$.  For $i\neq j$, $|g_ig_j^{-1}|$ is at least $2N$ since $t^{N}(ta)^{i-j} t^{-N}$ has no pinches.  Being $K_e$-separated is invariant under the action of $G_e$ on $\Gamma(G_e, S_e)$; hence, it is enough to show that $\gamma$ and $g_i\cdot\gamma$ are $K_e$-separated.

Let $x$ be the vertex of $\Gamma(G,S)$ with label $g_i$ and $x_0$ the vertex with label $1$.

Since $t^{N}(ta)^i t^{-N}$ has no pinches, $g_i$ and $1$ are in different components of $\Gamma(G_e, S_e)\backslash T^{N+1}K_e$ where $1$ is the identity element of $G_e$.  As well,  $\d(g_i,T^{N+1}K_e)\geq N$ and $\d(1, T^{N+1}K_e) \geq N$.  Then $N>\diam{\gamma}$ implies that $\gamma$, $g_i\cdot\gamma$ are in distinct components of $\Gamma(G_e,S_e)\backslash T^{N+1}K_e$.

\end{proof}

An analogous proof gives us the following result for $G_a$ where $g_j = a_1^N (a_1a_2)^j a_1^{-N}$ for $a_i\in A_i\backslash H_i$.

\begin{lem}\label{atranslates}\ulabel{atranslates}{Lemma}
Suppose that $H_i$ is a proper subgroup of $G_i$ for $i= 1,2$.  Let $\gamma$ be a loop in $\Gamma(G_a, S_a)$ and $N> \diam {\gamma}$.  Then there exists elements $\{g_1,\cdots, g_N\}$ in $G_a$ such that

\begin{enumerate}[(i)]
    \item $g_i\cdot\gamma, g_j\cdot\gamma$ are $H_1$-separated  and

    \item $|g_ig_j^{-1}|\geq 2N$ and $|g_i|\leq 4N$ for all $i\neq j$.
\end{enumerate}

\end{lem}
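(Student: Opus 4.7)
The plan is to essentially repeat the argument of Lemma \ref{etranslates}, adapted to the amalgamated product structure with the choice of $g_j$ suggested by the author. Since each $H_i$ is a proper subgroup of $G_i$, I would first select generators $a_1\in A_1\setminus H_1$ and $a_2\in A_2\setminus H_2$. Setting $g_j\equiv a_1^N(a_1a_2)^ja_1^{-N}$ for $j=1,\ldots,N$, a direct computation gives
$$g_ig_j^{-1} \;=\; a_1^N(a_1a_2)^i a_1^{-N}\cdot a_1^N(a_1a_2)^{-j}a_1^{-N} \;=\; g_{i-j}.$$

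For part (ii), the upper bound $|g_j|\leq 2N+2j\leq 4N$ is immediate from counting letters. For the lower bound, I would observe that $g_{i-j}$, written as an alternating product $a_1^{N+1}\cdot a_2\cdot a_1\cdot a_2\cdots a_1^{-N}$, is in reduced form in the amalgamated product: no syllable lies in the amalgamated subgroup $H_1$, since $a_1,a_2$ were chosen outside $H_1$ and $H_2=\phi(H_1)$. By the normal form theorem for amalgamated products, any expression for $g_{i-j}$ in $\Gamma(G_a,S_a)$ must traverse the same alternating syllable pattern, and in particular must cover the $a_1$-syllables of length $N$ (resp.\ $N+1$) at each end, yielding $|g_ig_j^{-1}|\geq 2N$.

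For part (i), by the left-invariance of $H_1$-separation it suffices to show that $\gamma$ and $g_i\cdot\gamma$ are $H_1$-separated for each $i\neq 0$. The coset $a_1^{N+1}H_1$, viewed as a vertex set of $\Gamma(G_a,S_a)$, plays the role that $t^{N+1}K_e$ played in Lemma \ref{etranslates}: because the reduced normal form of $g_i$ begins with the syllable $a_1^{N+1}$ followed by $a_2\notin H_2$, normal-form considerations force the identity $1$ and the vertex $g_i$ to lie in distinct connected components of $\Gamma(G_a,S_a)\setminus a_1^{N+1}H_1$, each at distance at least $N$ from this coset. Since $N>\diam{\gamma}$, the translates $\gamma$ and $g_i\cdot\gamma$ are themselves contained in these distinct components.

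The main obstacle is verifying, from the normal form theorem for amalgamated products, both the lower bound $|g_ig_j^{-1}|\geq 2N$ and the distance estimate from $g_i$ to the coset $a_1^{N+1}H_1$. The ideas parallel Britton's lemma for HNN extensions, but one must work with the alternating-syllable structure and the amalgamated subgroup rather than with a single stable letter and an associated subgroup, which introduces extra bookkeeping around the junctions where consecutive syllables switch between $G_1$ and $G_2$.
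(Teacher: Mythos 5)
Your approach is the one the paper intends (the paper gives no separate proof, asserting only that \ref{etranslates} adapts with $g_j = a_1^N(a_1a_2)^ja_1^{-N}$ for $a_i\in A_i\backslash H_i$), but the normal-form bookkeeping you defer to is exactly where the argument breaks, and it is not mere bookkeeping. The first gap is your claim that the alternating product $a_1^{N+1}\cdot a_2\cdot a_1\cdots a_1^{-N}$ is reduced because ``no syllable lies in the amalgamated subgroup, since $a_1,a_2$ were chosen outside $H_1$ and $H_2$'': the complement of a subgroup is not closed under taking powers, so $a_1\notin H_1$ gives no control over whether $a_1^{N}$ or $a_1^{N+1}$ lies in $H_1$. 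Concretely, take $G_1=\langle a_1\rangle\cong\Z$, $G_2=\langle a_2\rangle\cong\Z$, $H_1=\langle a_1^2\rangle$, $H_2=\langle a_2^2\rangle$, so $G_a=\pres{a_1,a_2}{a_1^2=a_2^2}$. Here $a_1^2$ is central, so for $N$ even $g_j=(a_1a_2)^j$, hence $|g_ig_j^{-1}|=|(a_1a_2)^{i-j}|\leq 2|i-j|<2N$ and conclusion (ii) fails; moreover $a_1^{N+1}H_1=a_1H_1$ is at distance $1$ from the identity, so your distance estimate $\d(1,a_1^{N+1}H_1)\geq N$ and with it the argument for (i) also collapse. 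The hypotheses of the lemma give you no way to rule this out.

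The second, independent gap is the passage from syllable structure to word length. In \ref{etranslates} the bound $|g_ig_j^{-1}|\geq 2N$ comes from counting $t$-letters: Britton's lemma forces every word representing $t^N(ta)^{i-j}t^{-N}$ to contain at least $2N+|i-j|$ occurrences of $t^{\pm1}$, and each such occurrence contributes one letter. In an amalgamated product the corresponding invariant is the number of syllables in reduced form, which for $a_1^N(a_1a_2)^{i-j}a_1^{-N}$ is only $2|i-j|+1$; a single syllable such as $a_1^{\pm N}$ need not contribute $N$ letters to a geodesic, since that element of $G_1$ may be spelled by a much shorter word in $A_1\cup A_2$ (torsion or relations in $G_1$, or distortion created by the amalgamation). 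So your assertion that any expression ``must cover the $a_1$-syllables of length $N$ at each end'' does not follow from the normal form theorem. Repairing the lemma requires either an extra hypothesis on $a_1$ (for instance $\langle a_1\rangle\cap H_1=\{1\}$ together with non-distortion of $\langle a_1\rangle$) or a different choice of the $g_j$ whose pairwise quotients have reduced syllable length at least $2N$ while still satisfying $|g_j|\leq 4N$; as written, neither your proposal nor the paper's one-line remark establishes the statement.
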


\begin{thm}\label{HNN-Amalg}\ulabel{HNN-Amalg}{Theorem}
Suppose that $G$ is an HNN-extension or amalgamated product where the associated subgroups are proper, quasi-isometrically embedded, prairie groups.  Then every asymptotic cone of $G$ is either simply connected or has uncountable fundamental group.
\end{thm}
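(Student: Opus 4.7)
\textbf{Proof proposal for Theorem \ref{HNN-Amalg}.} The plan is to mimic the strategy of Proposition \ref{fundcutpoint}, with cosets of the associated subgroup $H$ replacing the global cut-point as the ``obstruction'' through which any homotopy must pass. Suppose $\gcon$ is not simply connected and fix an essential loop $\gamma$. By Lemma \ref{limitloop} I may write $\gamma = (\gamma_n)$ with each $\gamma_n$ a loop in $\Gamma(G,S)$ of length $O(d_n)$.

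First I would apply Lemma \ref{etranslates} in the HNN case, or Lemma \ref{atranslates} in the amalgamated case, with $N_n = 2\diam{\gamma_n} + 1$ (so $N_n = O(d_n)$ and $N_n \to \infty$ $\omega$-almost surely). This yields elements $g_1^n, \ldots, g_{N_n}^n \in G$ with $|g_i^n| \le 4N_n$ such that the translates $g_i^n \cdot \gamma_n$ are pairwise $H$-separated by cosets of the associated subgroup $H$. Since $N_n \to \infty$ $\omega$-almost surely, the ultraproduct $\prod_n \{1,\ldots,N_n\}/\omega$ has cardinality $2^{\aleph_0}$; for each class $\alpha$ represented by $(k_n^\alpha)$ the sequence $\tilde g_\alpha := (g_{k_n^\alpha}^n)$ defines an isometry of $\gcon$ (the bound $|g_i^n| = O(d_n)$ is what makes this well-defined). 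I thus obtain an uncountable family $\{\tilde g_\alpha \cdot \gamma\}_\alpha$ of essential loops in $\gcon$.

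The core step is to show that for $\alpha \ne \beta$ the loops $\tilde g_\alpha\cdot\gamma$ and $\tilde g_\beta\cdot\gamma$ are not freely homotopic. Since $k_n^\alpha \ne k_n^\beta$ $\omega$-almost surely, there are cosets $h_n H$ separating $g_{k_n^\alpha}^n \cdot \gamma_n$ from $g_{k_n^\beta}^n\cdot\gamma_n$ in $\Gamma(G,S)$ $\omega$-almost surely. Applying Lemma \ref{limitseparates2} with $\kappa \equiv 0$ to basepoints of the loops (whose whole images are connected and thus lie in single components), the set $E := \lim_e^\omega h_n H \subset \gcon$ separates the two loops. Because $H$ is quasi-isometrically embedded, the induced metric on $h_n H$ from $\Gamma(G,S)$ is uniformly bi-Lipschitz to the word metric on $H$, so $E$ is bi-Lipschitz to an asymptotic cone of $H$. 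Because $H$ is prairie, this cone is simply connected; by the remark following the list of prairie groups (citing \cite{ConnerKent}), cones of prairie groups are also uniformly locally simply connected, and being geodesic they are uniformly locally path connected. All three properties pass through bi-Lipschitz equivalence, so Lemma \ref{cut1} applies with $E$ as the simply connected subset: any annular homotopy between $\tilde g_\alpha\cdot\gamma$ and $\tilde g_\beta\cdot\gamma$ would null-homotope both loops, and translating by $\tilde g_\alpha^{-1}$ would null-homotope $\gamma$, contradicting essentiality. Hence $\{\tilde g_\alpha\cdot\gamma\}_\alpha$ represents uncountably many distinct free homotopy classes and $\pi_1(\gcon)$ is uncountable.

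The main technical obstacle is verifying that the limit set $E$ of a single coset inherits the precise uniform local regularity hypotheses demanded by Lemma \ref{cut1}. The quasi-isometric embedding hypothesis supplies the bi-Lipschitz identification of $E$ with $\operatorname{Con}^\omega(H,d)$, but uniform local simple connectivity of cones of prairie groups is a nontrivial strengthening of simple connectivity which genuinely requires the stronger Conner--Kent property; in proofs of prairie-ness (e.g.\ quadratic Dehn function, $CAT(0)$, etc.) this additional uniformity is automatic, so the hypothesis is the natural one. A secondary concern — producing continuum many pairwise $\omega$-a.e.\ distinct index sequences $(k_n)$ — is handled by $N_n \to \infty$ $\omega$-almost surely together with the standard cardinality argument for ultraproducts.
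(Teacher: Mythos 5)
Your proposal is correct and follows essentially the same route as the paper: translate $\gamma$ by an ultraproduct of the families from Lemma \ref{etranslates}/\ref{atranslates}, use Lemma \ref{limitseparates2} to see that distinct translates are separated by $\lim^\omega h_nH$, identify that set bi-Lipschitzly with a cone of the prairie group $H$, and invoke Lemma \ref{cut1} to cut off any putative homotopy. Your extra care about the uniform local simple connectivity needed for Lemma \ref{cut1} (via the Conner--Kent remark) and about translating the null-homotopy back to $\gamma$ only makes explicit steps the paper leaves implicit.
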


\begin{proof}
Let $G\in \{ G_e, G_a\}$ and $S$ be the corresponding generating set for $G$.  Suppose that $\gcon$ is not simply connected.  Then there exists $\gamma$ an essential loop in $\gcon$ and we may choose loops $\gamma_n$ in $\Gamma(G,S)$ such that $\bigl(\gamma_n(t)\bigr) = \gamma(t)$.  Let $c_n = 2\diam{\gamma_n}$.  Let $S_n$ be the set of elements of $G$ given by \ref{etranslates} or \ref{atranslates}.  For every two distinct elements $g_n, h_n$ of $S_n$, $g_n\cdot\gamma_n$ and $h_n\cdot\gamma_n$ are $H$-separated for some quasi-isometrically embedded prairie subgroup $H$ of $G$.

Let $g=(g_n), h= (h_n)\in \prod^\omega S_n$.

\begin{clm*}
Then $g\cdot\gamma$, $h\cdot\gamma$ are well-defined loops in $\gcon$ and $g\cdot\gamma$ is not homotopic to $h\cdot\gamma$ if $g, h$ are distinct elements of $\prod^\omega S_n$.
\end{clm*}

The first assertion follows from the fact that $g_n$ grows big O of the scaling sequence.

Suppose that $g\cdot\gamma$ is homotopic to $h\cdot \gamma$ for distinct $h, g$.  Then \as~ $g_n \neq h_n$  and there exists $k_n$ such that $g_n\cdot\gamma_n$ and $h_n\cdot\gamma_n$ are in distinct components of $\Gamma(G,S)\backslash k_n H$.

Thus $g\cdot\gamma$, $h\cdot\gamma$ are in distinct components of $\gcon \backslash \lim^\omega k_n H$ by \ref{limitseparates2}.  Since $H$ is  quasi-isometrically embedded; $\lim^\omega k_n H$ is bi-Lipschitz to $\hcon$ which is simply connected, uniformly locally simply connected, and geodesic.

Thus $\lim^\omega k_n H$ is simply connected, uniformly locally simply connected, and uniformly locally path connected.  Hence, \ref{cut1} implies that $g\cdot\gamma$ and $h\cdot\gamma$ are null-homotopic which contradicts our choice of $\gamma$.

This completes the proof of the claim.  The theorem follows since $\prod^\omega S_n$ is uncountable.
\end{proof}

\begin{cor}If $G$ is has more than one end, then every asymptotic cone of $G$ is either simply connected or has uncountable fundamental group.

\end{cor}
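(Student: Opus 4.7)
The plan is to reduce the corollary to Theorem \ref{HNN-Amalg} by invoking Stallings' structure theorem for finitely generated groups with more than one end. That theorem provides, for any such $G$, a splitting of $G$ as an HNN extension $A*_C$ or amalgamated product $A*_C B$ in which the associated subgroup $C$ is finite (and the factor inclusions are proper in the sense required). So the whole argument amounts to showing that such a finite $C$ qualifies as an associated subgroup in the hypothesis of \ref{HNN-Amalg}.

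First I would dispose of the two-ended case. By \ref{cyclic}, if $G$ has an asymptotic cone with exactly two ends then $G$ is virtually cyclic and every asymptotic cone of $G$ is a line, hence simply connected; so the conclusion is immediate in that case. One may therefore assume $G$ has infinitely many ends, or simply apply Stallings' theorem uniformly and observe that the resulting splitting is over a finite subgroup with the properness conditions $[A:C],[B:C]\geq 2$ (and $\geq 3$ on at least one side in the amalgamated case), guaranteeing that $C$ is a proper subgroup in each factor. Note that $G$ itself must be infinite, since a finite group has zero ends.

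Next I would verify the three conditions required of the associated subgroup $C$ in Theorem \ref{HNN-Amalg}. Properness has just been addressed. Quasi-isometric embedding is trivial for a finite subgroup, since any map from a bounded metric space into a metric space is a quasi-isometric embedding with constants depending only on the diameter. Finally, $C$ is a prairie group: because $C$ is finite, its Cayley graph $\Gamma(C,S_C)$ is bounded, so every asymptotic cone $\operatorname{Con}^\omega(C,d)$ is a single point, which is vacuously simply connected.

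With these three conditions verified, Theorem \ref{HNN-Amalg} applies directly to the Stallings splitting and yields that every asymptotic cone of $G$ is either simply connected or has uncountable fundamental group. The main (and essentially only) thing to be careful about is making sure the Stallings decomposition produces a splitting whose associated subgroup inclusions meet the properness requirement used in \ref{etranslates} and \ref{atranslates}; this is standard but worth stating explicitly, since Theorem \ref{HNN-Amalg} invokes those lemmas through the need to produce many $H$-separated translates of a loop.
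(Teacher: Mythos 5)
Your proposal is correct and follows essentially the same route as the paper: the paper's proof likewise invokes the splitting over a finite edge group guaranteed by having more than one end and observes that finite subgroups are proper, quasi-isometrically embedded prairie groups, so Theorem \ref{HNN-Amalg} applies. Your extra care with the two-ended case (where, e.g., $\mathbb Z$ as an HNN extension of the trivial group fails the properness hypothesis) is a reasonable refinement of the same argument, not a different approach.
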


\begin{proof}
If $G$ has more than one end, then it has a graph of groups decomposition with finite edge groups and hence is an HNN extension or an amalgamated product with finite associated subgroups and finite subgroups are always quasi-isometrically embedded prairie groups.
\end{proof}

This corollary was also shown in \cite{DS} since groups with more than one end are relatively hyperbolic.

A lemma due to Burillo.
\begin{lem}[\cite{Burillo}]\label{distortion}\ulabel{distortion}{Lemma}
If $X$ has is quasi-isometric to a metric space with a log metric then every asymptotic cone of $X$ is totally disconnected.
\end{lem}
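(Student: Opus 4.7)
The plan is to reduce the statement to the observation that the asymptotic cone of a log-metric space is ultrametric, and then invoke the well-known fact that every ultrametric space is totally disconnected. Since a quasi-isometry $X\to Y$ induces a bi-Lipschitz homeomorphism $\con \to \operatorname{Con}^\omega(Y,e',d)$, and total disconnectedness is a topological invariant, I may assume that $X$ itself carries the log metric, i.e. $X = (Y, d_{\log})$ with $d_{\log}(x,y) = \log(1 + d_Y(x,y))$ for some underlying metric $d_Y$ on the set $Y$.

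The key computation is the elementary inequality
$$\log(1 + a + b) \leq \log 2 + \max\bigl(\log(1+a), \log(1+b)\bigr)$$
valid for all $a,b \geq 0$, which follows from $1 + a + b \leq 2(1 + \max(a,b))$. Applying this to $a = d_Y(x_n, y_n)$ and $b = d_Y(y_n, z_n)$, and combining with the ordinary triangle inequality $d_Y(x_n, z_n) \leq a + b$ in $Y$, one obtains
$$d_{\log}(x_n, z_n) \leq \log 2 + \max\bigl(d_{\log}(x_n, y_n), d_{\log}(y_n, z_n)\bigr).$$
Dividing through by the scaling factor $d_n$ and taking the $\omega$-limit, the additive constant $\log 2$ washes out since $d_n$ is $\omega$-divergent. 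This yields the strong (ultrametric) triangle inequality
$$\d(x, z) \leq \max\bigl(\d(x, y), \d(y, z)\bigr)$$
for any three points $x, y, z \in \con$. In other words, $\con$ is ultrametric.

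To finish, I appeal to the standard fact that every ultrametric space is totally disconnected: for distinct $x, y \in \con$ with $\d(x, y) = c > 0$, the open ball $U = \{z \in \con : \d(z, x) < c\}$ is simultaneously closed, because any $w$ with $\d(w, U) < c$ satisfies $\d(w, x) \leq \max\bigl(\d(w, z), \d(z, x)\bigr) < c$ for some $z \in U$ and hence already belongs to $U$. The clopen set $U$ separates $x$ from $y$, so the connected component of $x$ is just $\{x\}$.

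The entire argument rests on the single inequality that converts the additive triangle inequality into its ultrametric form after taking logarithms and rescaling; the quasi-isometric reduction and the total disconnectedness of ultrametric spaces are both standard, so I do not anticipate any serious obstacle beyond correctly identifying what is meant by a \emph{log metric} in the statement.
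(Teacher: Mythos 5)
Your proof is correct. The paper itself gives no argument for this lemma --- it is stated as a citation to Burillo --- so there is nothing internal to compare against; but your route (reduce by quasi-isometry invariance to the log metric itself, use $1+a+b\leq 2\bigl(1+\max(a,b)\bigr)$ to get $d_{\log}(x,z)\leq \log 2+\max\bigl(d_{\log}(x,y),d_{\log}(y,z)\bigr)$, let the additive constant vanish under rescaling so that $\con$ is ultrametric, and conclude total disconnectedness from clopen balls) is precisely the standard argument behind Burillo's lemma, and every step checks out.
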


\begin{cor}\label{HNN-Amalg2}\ulabel{HNN-Amalg2}{Corollary}
Suppose that $G$ is an HNN-extension or amalgamated product where the associated subgroups are exponentially distorted.  Then every asymptotic cone of $G$ is either simply connected or has uncountable fundamental group.
\end{cor}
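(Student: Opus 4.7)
The plan is to mirror the proof of Theorem~\ref{HNN-Amalg}, substituting Burillo's Lemma~\ref{distortion} for the quasi-isometric embedding hypothesis and replacing the cut lemma~\ref{cut1} with a much simpler collapsing argument that exploits total disconnectedness. Assume $\gcon$ is not simply connected and fix an essential loop $\gamma=(\gamma_n)$; set $c_n=2\diam{\gamma_n}$. Apply \ref{etranslates} or \ref{atranslates} to obtain sets $S_n\subset G$ of size at least $c_n$ such that any two distinct $g_n,h_n\in S_n$ yield loops $g_n\cdot\gamma_n, h_n\cdot\gamma_n$ which are $H$-separated in $\Gamma(G,S)$, where $H$ is one of the exponentially distorted associated subgroups. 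For any two distinct sequences $g=(g_n), h=(h_n)\in\prod^\omega S_n$, there are cosets $k_n H$ separating $g_n\cdot\gamma_n$ from $h_n\cdot\gamma_n$ \as; Lemma~\ref{limitseparates2} then places $g\cdot\gamma$ and $h\cdot\gamma$ in distinct components of $\gcon\setminus\lim^\omega k_n H$.

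The key new ingredient comes from the exponential distortion of $H$: the identity $(H,d_H)\to(H,d_G|_H)$ is a quasi-isometry between $H$ with its word metric and $H$ with a metric that is (up to QI) logarithmic in the word metric. Lemma~\ref{distortion} therefore shows that every asymptotic cone of $(H,d_G|_H)$ is totally disconnected, and in particular the subset $E:=\lim^\omega k_n H$ of $\gcon$ is totally disconnected.

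I now claim $g\cdot\gamma$ cannot be freely homotopic to $h\cdot\gamma$. If $F\colon A\to\gcon$ were a homotopy on an annulus between them, then $F^{-1}(E)$ would separate the two boundary circles of $A$, so by the Phragm\'en--Brouwer properties some connected component $C$ of $F^{-1}(E)$ separates them. Since $C$ is connected and $E$ is totally disconnected, $F(C)$ reduces to a single point $p\in E$. Let $U_1$ be the component of $A\setminus C$ whose closure meets the boundary circle mapping to $g\cdot\gamma$ and define
\[
\tilde F(x)=\begin{cases} F(x) & x\in\overline{U_1},\\ p & x\in A\setminus U_1.\end{cases}
\]
Continuity at the interface $C\cap\partial U_1$ is automatic since $F\equiv p$ on $C$. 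The map $\tilde F$ sends one boundary circle of $A$ to $g\cdot\gamma$ and collapses the other to the constant loop at $p$, so $g\cdot\gamma$ is null-homotopic, contradicting essentiality. Since $\prod^\omega S_n$ is uncountable and indexes pairwise non-homotopic essential loops, $\pi_1(\gcon)$ is uncountable.

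The main obstacle I anticipate is the topological bookkeeping around the collapsing step: producing the separating component $C$ from $F^{-1}(E)$ via Phragm\'en--Brouwer and then verifying that $\tilde F$ is genuinely continuous along the (possibly wild) set $\partial U_1\cap C$. These points, however, are substantially simpler than the construction in \ref{cut1}, because total disconnectedness of $E$ forces $F(C)$ to be a single point and we only need to collapse to that point rather than fill in discs over a dyadic square decomposition.
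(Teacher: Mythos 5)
Your proposal is correct and follows essentially the same route as the paper: run the argument of \ref{HNN-Amalg}, use \ref{distortion} to conclude that $\lim^\omega k_nH$ is totally disconnected, find a separating component $C$ of the preimage under the homotopy, note that $F(C)$ is a single point by total disconnectedness, and collapse to obtain a null-homotopy of one of the loops. The only difference is that you spell out the pasting-lemma continuity check that the paper leaves implicit.
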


\begin{proof}
We will proceed as in the proof of \ref{HNN-Amalg}.  We only need to show how to circumvent the use of \ref{cut1}.

We can construct $S_n$ as before and let $g=(g_n), h= (h_n)$ for $g_n, h_n\in S_n$.

If $g_n \neq h_n$ \as, then there exists $X = \lim^\omega k_n H$ such that  $g\cdot\gamma$, $h\cdot\gamma$ are in distinct components of $\gcon\backslash X$.  Since $H$ is exponentially distorted, it is totally disconnected by \ref{distortion}.

Suppose that $h \co A \to \gcon$ is a homotopy from $g\cdot\gamma$ to $h\cdot\gamma$.  Then there exists a component $C$ of $h^{-1}(X)$ which separates the boundary components of $A$.  Since $X$ is totally disconnected, $h(C)$ must be a point.  Hence $h$ can be modified to a map on the disc by sending the component of the disc bounded by $C$ to $h(C)$.  Thus $g\cdot\gamma$ must be null-homotopic, which contradicts our choice of $\gamma$.

\end{proof}

\begin{cor}
Let $G= \pres{a,t}{(a^p)^t = a^q}$ be the Baumslag-Solitar group where $|p|\neq |q|$.  For every $(\omega, d)$, $\gcon$ has the following properties.

\begin{enumerate}[(i)]

    \item $\gcon$ is not semilocally simply connected.
    \item $\pi_1(\gcon,x_0)$ is not simple.
    \item Every decomposition of $\pi_1(\gcon,x_0)$ into a free product of subgroups has a factor which is a not free and uncountable.
    \item $\pi_1(\gcon,x_0)$ contains an uncountable free subgroup.

\end{enumerate}

\end{cor}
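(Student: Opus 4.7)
The plan is to apply Corollary \ref{HNN-Amalg2} together with Burillo's explicit Hawaiian earring embedding \cite{Burillo}, and then extract each property from the constructions involved. First I check the hypotheses of \ref{HNN-Amalg2}: $G = BS(p,q)$ is the HNN-extension of $\langle a\rangle \cong \Z$ along the proper infinite cyclic associated subgroups $\langle a^p\rangle$ and $\langle a^q\rangle$, and the iteration $t^n a^{p^n} t^{-n} = a^{q^n}$ forces $G$-length $O(n)$ while $\langle a\rangle$-length grows like $\max(|p|,|q|)^n$, giving exponential distortion whenever $|p|\neq|q|$. So \ref{HNN-Amalg2} yields the dichotomy for every $\gcon$, and Burillo's $\pi_1$-embedded Hawaiian earring rules out simply connected cones; in particular $\pi_1(\gcon,x_0)$ is always uncountable.

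Property (i) is then immediate: every neighborhood of the Hawaiian earring basepoint contains its small essential circles, which remain essential in $\gcon$ by $\pi_1$-embedding, and homogeneity propagates this failure of semilocal simple connectivity to every point. For (iv), I would follow the template of \ref{fundcutpoint2} using the totally disconnected separators supplied by \ref{HNN-Amalg2}. Pick an essential loop $\gamma$ of infinite order, let $S_n$ be the sets from \ref{etranslates}, and form the translates $g\cdot\gamma$ for $g = (g_n)\in\prod^\omega S_n$; distinct translates are pairwise separated in $\gcon$ by a totally disconnected set of the form $\lim^\omega k_n\langle a^p\rangle$. Fixing a basepoint $x_0$ and paths $p_g$ to each translate, the based loops $\mathrm{x}_g = p_g * (g\cdot\gamma) * \overline p_g$ form an uncountable family, and the Phragm\'en-Brouwer cut-off from the proofs of \ref{fundcutpoint2} and \ref{HNN-Amalg2} shows that this family generates a free product of cyclic groups, yielding an uncountable free subgroup.

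For (iii), let $E$ denote the Hawaiian earring fundamental group and $\iota_*\co E \hookrightarrow \pi_1(\gcon,x_0)$ Burillo's $\pi_1$-embedding. Given any decomposition $\pi_1(\gcon,x_0) = A*B$, Kurosh's subgroup theorem writes $\iota_*(E)$ as a free product of conjugate intersections with $A$, with $B$, and a free group. Since $E$ is neither free nor a non-trivial free product, one Kurosh factor $\iota_*(E)\cap gAg^{-1}$ or $\iota_*(E)\cap gBg^{-1}$ must be non-free and uncountable, and hence so is the ambient factor $A$ or $B$. Property (ii) then follows: if $\pi_1$ admits any non-trivial free product decomposition $A*B$, the canonical retraction $A*B \to A$ has a proper non-trivial normal kernel; otherwise $\pi_1$ is freely indecomposable, and (iv) combined with the Hawaiian-earring structure still supplies a normal subgroup --- for instance the normal closure in $\pi_1(\gcon,x_0)$ of the image under $\iota_*$ of $E$'s ``tail subgroup'' of arbitrarily small loops --- which is proper because it cannot contain the fixed-diameter essential loop $\gamma$ of (iv).

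The main obstacle is the Kurosh step in (iii): $\iota_*(E)$ is uncountably generated, so its Kurosh decomposition may in principle involve uncountably many conjugate intersections, and the freely indecomposable sub-case of (ii) needs an honest construction of a proper normal subgroup rather than a free product retraction. The clean way to push (iii) through is to exhibit an uncountable family of ``infinite convergent product'' elements of $E$ and observe that, by the standard cyclically reduced normal form in a free product of groups, such infinitely divisible elements must all lie up to conjugacy in a single factor of $A*B$; this uncountable sub-family then inherits the non-free, non-decomposable structure of $E$, forcing the chosen factor to be uncountable and non-free.
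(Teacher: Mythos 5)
Your argument for (iv) is essentially the paper's: the paper likewise takes an essential loop $\gamma$, uses the translate construction of \ref{HNN-Amalg}/\ref{etranslates} to get uncountably many copies separated by the totally disconnected sets $\lim^\omega_e g_n\langle a^q\rangle$ (totally disconnected by \ref{distortion} since $\langle a^q\rangle$ is exponentially distorted), and then runs the cut-off argument of \ref{fundcutpoint2} to conclude that the based translates generate a free product of cyclic groups, hence contain an uncountably generated free subgroup. One small correction: you do not need (and should not assume) that $\gamma$ has infinite order; the paper explicitly allows $\gamma$ to have finite order and extracts the free subgroup from the free product of cyclic groups afterwards.

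For (i)--(iii) you diverge from the paper, and this is where the genuine gaps are. The paper does not prove (i)--(iii) at all; it cites Corollary 3.2 of \cite{ConnerKent}. Your substitute arguments have three problems. First, everything rests on Burillo's Hawaiian earring being $\pi_1$-embedded in \emph{every} asymptotic cone of \emph{every} $BS(p,q)$ with $|p|\neq|q|$; as quoted in this paper, Burillo's result is that such cones \emph{can} have $\pi_1$-embedded Hawaiian earrings, which is an existence statement for particular Baumslag--Solitar groups and particular cones, not the universal statement "for every $(\omega,d)$" that the corollary asserts. Without that universal input you cannot even rule out the simply connected horn of the \ref{HNN-Amalg2} dichotomy, which both you and the paper need as the starting point of (iv). Second, in the freely indecomposable case of (ii), your candidate normal subgroup (the normal closure of the image of the "tail subgroup" of small loops) is asserted to be proper "because it cannot contain $\gamma$," but no reason is given; normal closures of uncountable subgroups in these fundamental groups are not controlled by diameter considerations, and this step does not go through as written. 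Third, your repair of the Kurosh step in (iii) via "infinitely divisible" elements does not apply: elements of the Hawaiian earring group given by infinite convergent products are not $n$-divisible for all $n$, so the standard normal-form argument forcing such elements into a single free factor has no purchase. The honest route to (i)--(iii) is the one the paper takes, namely citing the Conner--Kent result, which is proved there by different (shape-theoretic/locally-non-simply-connected) methods.
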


\begin{proof}
Let $G= \pres{a,t}{(a^p)^t = a^q}$ be the Baumslag-Solitar group where $|p|\neq |q|$.  Properties $(i)-(iii)$ are proved in Corollary 3.2 of \cite{ConnerKent}.  So we need only prove $(iv)$.  The proof is an adaptation of the proof of \ref{fundcutpoint2}.

Since $\gcon$ is not semilocally simply connected, it is not simply connected.  Thus is contains an essential loop $\gamma$.  \ref{HNN-Amalg} shows how to find an uncountable set of essential loops all of which are in distinct components of $\gcon\backslash \lim^\omega_e g_n \langle a^q\rangle$ for some choice of $g_n \in G$.

Using this uncountable set of loops, we can find $S'_\gamma$ as in \ref{fundcutpoint2}.  We will now use the notation from \ref{fundcutpoint2} and show how to modify the proof.

Suppose that $\mathrm{x}_1^{n_1}* \cdots*\mathrm{x}_k^{n_k}$ is a null homotopic loop in $\gcon$ where $\mathrm{x}_i \neq \mathrm{x}_{i+1}$, $\mathrm{x}_1 \neq \mathrm{x}_k$ and $\mathrm{x}_i^{n_i}$ is an essential loop.  Then there exists $h\co \mathbb D \to \gcon$ a map from the unit disc in the plane such that $h(\partial \mathbb D)$ is a parameterization of the curve $\mathrm{x}_1^{n_1}* \cdots*\mathrm{x}_k^{n_k}$.  Let $C$ be the closure of the connected component of $\mathbb D \backslash h^{-1}\bigl\{\lim^\omega_e g_n \langle a^q\rangle\bigr\}$ containing the subpath $p$ of  $\partial \mathbb D$ which maps to  $\mathrm{x}_1^{n_1}$.

Recall that $\langle a^q\rangle$ is exponential distorted in $G$.  Thus $\lim^\omega_e g_n \langle a^q\rangle$ is totally disconnected by \ref{distortion}.

Since $C$ is the closure of a component of  $\mathbb D \backslash h^{-1}\Bigl(\bigl\{\lim^\omega_e g_n \langle a^q\rangle\bigr\}\Bigr)$, $\partial C \backslash\{ p\}$ is connected and maps into $\lim^\omega_e g_n \langle a^q\rangle$.  Hence $h(\partial C \backslash \{p\})$ is a point $b$.

Define $h'\co \mathbb D \to \gcon$  by $h'(y) = h(y)$ for $y\in C$ and $h'(y) = b$ for $y\not\in C$.  Then $h'$ is continuous and $\mathrm{x}_1^{n_1}$ is null homotopic which contradicts our choice of $x_1^{n_1}$.

Again, the subgroup generated by $S_\gamma'$ may not by a free group but it is the free product of cyclic groups.  Thus it is easy to find an uncountably generated free subgroup.  This completes the proof of the corollary.

\end{proof}

\subsection{Partitions of van Kampen diagrams}\label{section vankampen}

The following definition of partitions are due to Papasoglu in \cite{pap}.

\textbf{Partitions of the unit disc in the plane:} Let $D$ be the unit disk in $\mathbb R^2$ or the planar annulus $\bigl\{(x,y) | x^2 + y^2 \in [\frac14,1]\bigr\}$. A \emph{partition $P$ of $D$} is a finite collection of closed discs $D_1, \cdots, D_k$ in the plane with pairwise disjoint interiors such that $D = \cup_i D_i$, $\partial D = \partial (D_1\cup \cdots\cup D_k)$, and $D_i\cap D_j = \partial D_i\cap\partial D_j$ when $i\neq j$. A point $p$ on $\partial D_1\cup \cdots\cup\partial D_k$  is called a \emph{vertex of the partition} if for every open set $U$ containing $p$, $U\cap
(\partial D_1\cup \cdots\cup\partial D_k)$ is not homeomorphic to an
interval. An \emph{edge of a partition} is a pair of adjacent vertices of a disc in the partition.  A \emph{piece of a partition} is the set of the vertices of a disc in the partition.  A partition is then a cellular decomposition of the underline space of $P$ where each vertex has degree at least 3; so we will use the standard notation, $P^{(i)}$, to denote the $i$-th skeleton of a partition.

\textbf{Geodesic $n$-gons in a metric space X:} An \emph{$n$-gon} in $X$ is a
map from the set of vertices of the standard regular $n$-gon in the plane
into $X$, i.e. an ordered set of $n$ points in $X$.  If $X$ is a geodesic
metric space, we can extend the $n$-gon to edges by mapping the edge
between adjacent vertices of the standard regular $n$-gon in the plane to a geodesics segment joining the corresponding vertices of the $n$-gon in $X$.  We will say that such an extension is a \emph{geodesic $n$-gon} in $X$.

\textbf{Partitions of loops in a geodesic metric space X:} Let $\mathbb D$ be the unit disc in the plane  and $\gamma\co\partial \mathbb D\to X$ be a continuous map.
A \emph{partition of $\gamma$} is a map $\Pi$ from the set of vertices of a partition $P$  of $\mathbb D$ to $X$ such that $\Pi\bigl|_{\partial P\cap P^{(0)}}=\gamma\bigl|_{\partial P\cap P^{(0)}}$. The \emph{vertices/edges/pieces} of $\Pi$ are the images of vertices/edges/pieces of $P$.  We will write $\Pi(\partial D_i)$ for the pieces of $\Pi$, where $D_i$ are the $2$-cells of the partition $P$.

\begin{rmk}\label{extend}\ulabel{extend}{Remark}
Suppose that $\Pi\co P^{(0)}\to X$ is a partition of a loop $\gamma$ in a geodesic metric space.  We can extend $\Pi$ to $P^{(1)}$ by mapping every edge contained in $\partial P^{(2)}$ to the corresponding subpath of $\gamma$ and every edge not contained in $\partial P^{(2)}$ to a geodesic segment joining its end points.  Then the \emph{length of a piece} is the arc length of the loop $\Pi(\partial D_i)$.  We will write $|\Pi(\partial D_i)|$ for the length of the piece $\Pi(\partial D_i)$. We define the \emph{mesh of $\Pi$} by $$\mesh (\Pi) = \max\limits_{1\leq i\leq k} \{|\Pi(\partial D_i)|\}.$$

At times it will be convenient to ignore some pieces of a partition.  If $Z$ is a subset of the pieces of $P$, then the \emph{relative mesh of $\Pi$} is $$\rmesh_Z (\Pi) = \max\limits_{D_i\in Z} \{|\Pi(\partial D_i)|\}.$$
\end{rmk}

When $X$ is a Cayley graph of a group, we will also assume that the partition takes vertices of $P$ to vertices in the Cayley graph.  A partition $\Pi$ is called a \emph{$\delta$-partition}, if $\mesh \Pi<\delta$.  A loop of length $k$ in a geodesic metric space is \emph{partitionable} if it has a $\frac{k}{2}$-partition.

Let $P(\gamma,\delta)$ be the minimal number of pieces in a $\delta$-partition of $\gamma$ if a $\delta$-partition exist and infinity otherwise.

If $P$ is a partition of the unit disc in the plane, then $P^{(1)}$ can be considered as a planar graph where every vertex has degree at least 3.   Then one can use the fact that the Euler characteristic of a planar graph is 1 to obtain the following.

\begin{lem}
Let $\Pi \co P^{(1)}\to \Gamma(G,S)$ be a partition of a loop in the Cayley graph of $G$.  If $\Pi$ has $F$ pieces, then $\Pi$ has at most $3F$ edges and at most $2F$ vertices.
\end{lem}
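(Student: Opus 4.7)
The plan is a direct application of Euler's formula for a CW decomposition of the disc, combined with the hypothesis that every vertex of the partition has degree at least three.

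First I would let $V$, $E$, and $F$ denote the number of vertices, edges, and pieces (2-cells) of the partition $P$. Since $P$ is a cellular decomposition of the unit disc and the disc is contractible, Euler's formula gives
\[
V - E + F = 1.
\]
Next I would invoke the defining property recalled just before the lemma: by definition a vertex of the partition is a point where the 1-skeleton fails to be locally an interval, so every vertex has degree at least three. Counting incidences between vertices and edge-ends yields
\[
2E \;=\; \sum_{v \in P^{(0)}} \deg(v) \;\geq\; 3V,
\]
so $V \leq \tfrac{2}{3}E$.

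Combining with Euler, $E = V + F - 1$, substituting the inequality $V \leq \tfrac{2}{3}E$ into this relation gives $E \leq \tfrac{2}{3}E + F - 1$, hence $E \leq 3F - 3 \leq 3F$. Plugging this back into Euler's formula (or into $V \leq \tfrac{2}{3}E$) gives $V \leq 2F - 2 \leq 2F$, which is the desired bound on the number of vertices.

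There is no real obstacle here; the only subtlety is to confirm that $V - E + F = 1$ is the correct form of Euler's formula for this situation (rather than $V - E + F = 2$, which would apply if we were on the sphere or counted an outer face). Since $P$ is a finite CW decomposition of the disc and the disc has Euler characteristic $1$, this is immediate, and the rest is arithmetic.
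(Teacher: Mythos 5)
Your proof is correct and follows exactly the route the paper indicates: the paper gives no written proof, only the remark that $P^{(1)}$ is a planar graph with every vertex of degree at least $3$ and that the Euler characteristic of the disc is $1$, which is precisely the combination of $V-E+F=1$ and $2E\geq 3V$ that you carry out. Your arithmetic ($E\leq 3F-3$, $V\leq 2F-2$) in fact gives slightly sharper constants than the stated bounds.
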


A straightforward inductive argument gives us the following lemma.

\begin{lem}\label{degree3vertices}\ulabel{degree3vertices}{Lemma}
Suppose that $T$ is a finite simplicial tree with at most $j$ vertices of degree $1$.  Then $T$ has at most $j-1$ vertices with degree greater than $2$.
\end{lem}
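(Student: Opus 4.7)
My plan is to prove the slightly stronger inequality $B(T) \le L(T) - 2$ whenever $T$ has at least two vertices, where $L(T)$ denotes the number of leaves (degree-$1$ vertices) and $B(T)$ the number of branch vertices (those of degree at least $3$). The stated lemma is then immediate: either $|V(T)| \le 1$, in which case $B(T) = 0 \le j-1$ (the bound is only meaningful for $j \ge 1$), or else $L(T) \le j$ forces $B(T) \le j-2 \le j-1$.

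I would proceed by induction on $|V(T)|$. The base case $|V(T)| = 2$ is a single edge, so $L(T) = 2$ and $B(T) = 0$, and the inequality holds with equality. For the inductive step, pick a leaf $v$ of $T$ with unique neighbor $u$, and set $T' = T \setminus \{v\}$; then $T'$ is again a tree with strictly fewer vertices. The argument splits on $\deg_T(u)$. If $\deg_T(u) = 2$, then $u$ becomes a leaf of $T'$, so $L(T') = L(T)$ and $B(T') = B(T)$. If $\deg_T(u) = 3$, then $u$ drops from branch vertex to degree $2$, so $L(T') = L(T) - 1$ and $B(T') = B(T) - 1$. If $\deg_T(u) \ge 4$, then $u$ remains a branch vertex, so $L(T') = L(T) - 1$ and $B(T') = B(T)$. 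In every case, applying the inductive hypothesis to $T'$ and unwinding yields $B(T) \le L(T) - 2$.

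No real obstacle arises; the argument is the straightforward case analysis the author advertises. The one subtlety worth keeping in mind is that pruning a leaf can itself \emph{produce} a new leaf (the case $\deg_T(u) = 2$), which is why one must track $L$ and $B$ simultaneously rather than trying to induct on $L(T)$ alone. As an alternative that avoids induction altogether, one can invoke the handshake identity $\sum_{v} \deg(v) = 2(|V(T)| - 1)$ and use the bound $\sum_{\deg(v) \ge 3} \deg(v) \ge 3 B(T)$; rearranging gives $B(T) \le L(T) - 2$ in one line.
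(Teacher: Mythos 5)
Your proof is correct, and it is exactly the ``straightforward inductive argument'' the paper invokes without writing out: prune a leaf, split on the degree of its neighbor, and track leaves and branch vertices together. The strengthened form $B(T)\le L(T)-2$ (or the one-line handshake-lemma derivation) in fact gives slightly more than the stated bound $j-1$, so nothing further is needed.
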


Our goal for the remainder of Section \ref{section vankampen} and Section \ref{section HNN} is to define partitions of van Kampen diagrams and show how to use the standard techniques for reducing van Kampen diagrams to build nice partitions of loops in the Cayley  graph.

\begin{defn}
Suppose that $\pres{A, t}{R}$ is an HNN-extension with stable letter $t$. Let $w$ be a word in the alphabet $S\cup S^{-1}$.  We will use $|w|_F$ to denote the freely reduced word length of $w$, $|w|_G$ to denote the minimal word length of $w$ in $\pres{A,t}{R}$ and $|w|_t$ to denote the number of $t$-letters in $w$.

A word $w$ is a \emph{$t$-shortest word} if $|w|_t\leq |w'|_t$ for all $w'=_G w$ and \begin{equation}\label{equation1}|w|_G = |w|_t +\sum |v_i|_G\end{equation} where $v_i$ ranges over maximal $a$-subwords of $w$.  To avoid trivialities, we will also require that every $a$-subword of a $t$-shortest word be freely reduced.

We will say that $w$ is an \emph{almost $t$-shortest word} if $|w|_t\leq |w'|_t$ for all $w'=_G w$.

A path $\gamma$ in the Cayley graph of $G$ is a \emph{$t$-shortest path} (or  an \emph{almost $t$-shortest path}) if $\lab\gamma$ is a $t$-shortest word (or an almost $t$-shortest word).
\end{defn}

The equality in (\ref{equation1}) implies that if we replace each maximal $a$-subword of a $t$-shortest path with a geodesic, then the whole path is geodesic.  This gives us the following result.

\begin{lem}\label{almostgeodesic}\ulabel{almostgeodesic}{Lemma}
Every edge in $\Gamma (G,S)$ labeled by a $t$-letter on a $t$-shortest path from $g$ to $h$ is also an edge of a geodesic from $g$ to $h$.
\end{lem}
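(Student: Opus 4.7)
The plan is to construct, from a given $t$-shortest path $\gamma$ from $g$ to $h$, an actual geodesic $\gamma'$ from $g$ to $h$ that shares all of its $t$-labeled edges with $\gamma$. The defining equation (\ref{equation1}) is exactly what makes this possible.

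First I would decompose the label of $\gamma$ as
\[
w \equiv v_0\, t^{\epsilon_1}\, v_1\, t^{\epsilon_2}\, v_2 \cdots t^{\epsilon_k}\, v_k,
\]
where each $v_i$ is a (possibly empty) maximal $a$-subword and each $\epsilon_j\in\{\pm 1\}$. Let $e$ be the $t$-edge of $\gamma$ corresponding to the $j$-th occurrence of $t^{\epsilon_j}$; its initial and terminal vertices in $\Gamma(G,S)$ are
\[
u_j \;=\; g\cdot v_0 t^{\epsilon_1} v_1 \cdots v_{j-1}, \qquad u_j\cdot t^{\epsilon_j}.
\]

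Next, for each $i$ choose a word $v_i'$ in the generators $A$ with $v_i' =_G v_i$ and $|v_i'| = |v_i|_G$, i.e.\ a geodesic representative of the element of $A$ named by $v_i$. Form the path $\gamma'$ from $g$ with label
\[
w' \;\equiv\; v_0'\, t^{\epsilon_1}\, v_1'\, t^{\epsilon_2}\, v_2' \cdots t^{\epsilon_k}\, v_k'.
\]
Then $w' =_G w =_G g^{-1}h$, so $\gamma'$ also terminates at $h$, and
\[
|\gamma'| \;=\; |w|_t + \sum_i |v_i|_G \;=\; |w|_G \;=\; d(g,h)
\]
by (\ref{equation1}) and the definition of $|w|_G$. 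Hence $\gamma'$ is a geodesic from $g$ to $h$.

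Finally I would verify that the $t$-edge $e$ of $\gamma$ actually occurs on $\gamma'$. Because each $v_i$ is an $a$-word, $v_i$ and $v_i'$ represent the same element of the subgroup generated by $A$ inside $G$ (here one uses Britton's lemma only in its most elementary form: that $a$-words equal in $G$ represent equal elements of $A$). Therefore
\[
g\cdot v_0 t^{\epsilon_1} v_1 \cdots v_{j-1} \;=\; g\cdot v_0' t^{\epsilon_1} v_1' \cdots v_{j-1}'
\]
as vertices of $\Gamma(G,S)$, so the $j$-th $t$-edge of $\gamma'$ starts at $u_j$, is labeled $t^{\epsilon_j}$, and hence coincides with $e$. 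Thus $e$ lies on the geodesic $\gamma'$, proving the lemma. The only subtle step is this last identification of vertices; everything else is just bookkeeping with (\ref{equation1}).
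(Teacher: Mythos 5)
Your proposal is correct and is essentially the paper's own argument: the paper justifies the lemma in the single sentence preceding it, observing that by (\ref{equation1}) replacing each maximal $a$-subword of a $t$-shortest path with a geodesic yields a geodesic, and you have simply written out the bookkeeping, including the (correct) observation that the endpoints of each $t$-edge are unchanged since $v_i =_G v_i'$. The aside about Britton's lemma is unnecessary --- equality of the vertices follows directly from $v_i =_G v_i'$ --- but this does not affect the argument.
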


\begin{defn}
Let $P$ be a partition of the unit disc $\mathbb D^2$ or the unit annulus in the plane and $\Delta$ a van Kampen diagram over $\pres {S}{R}$. A continuous map $\Psi\co P^{(2)}\to\Delta $ is a \emph{partition of $\Delta$} if it satisfies the following conditions.

\begin{enumerate}[(i)]

    \item $\Psi(P^{(0)})\subset \Delta^{(0)}$

    \item $\Psi$ takes edges of $P$ to edge paths in $\Delta^{(1)}$

    \item For each closed 2-cell $D$ of $P$, $\Psi(D)$ is a reduced subdiagram of $\Delta$

\end{enumerate}

If we consider $\Delta$ as a metric space with the edge metric, then $\Psi|_{P^{(0)}}$ is a partition of the loop $\partial\Delta$ under our previous definition.

As before, the \emph{edges/vertices/pieces of $\Psi$} are the image under $\Psi$ of edges/vertices/pieces of $P$ in $\Delta$.

Define the \emph{mesh of $\Psi$} by $\mesh(\Psi)=\mesh(\theta\comp\Psi)$ where $\theta$ is the canonical map into the Cayley complex.

$\Psi$ is an \emph{$h$-partition} of $\Delta$, if $\Psi$ is  partition of $\Delta$ and a homeomorphism.  
If $\Psi$ is a $h$-partition of $\Delta$ and  $\theta\comp\Psi$ takes edges of $P$ to geodesic paths ($t$-shortest paths), then we will say $\Psi$ is a \emph{geodesic partition ($t$-shortest partition)} of $\Delta$.

\end{defn}

This gives the underling space of $\Delta$ two cell structures, the cell structure inherited as a van Kampen diagram and the cell structure inherited from the partition.  When there is a chance of confusion, we will specify if we are considering a vertex/edge in the underling space as a $\Psi$-vertex/$\Psi$-edge or a $\Delta$-vertex/$\Delta$-edge.

The following lemma follows trivially by considering each of the three types of $0$-refinements.

\begin{lem}\label{adapt}\ulabel{adapt}{Lemma}
    Suppose that $\Psi\co P^{(2)}\to \Delta$ is a partition (or a geodesic partition) and $\Delta'$ is a $0$-refinement of $\Delta$. Then there exists a partition (or geodesic partition) $\Psi\co P^{(2)}\to \Delta'$ which preserves the number of pieces, edges, and vertices; the mesh of the partition; and the labels of edges (after removing any possible $1$'s).
\end{lem}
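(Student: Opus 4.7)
The plan is to verify the lemma by checking each of the three types of $0$-refinement from \cite{Obook} in turn, since the claim is a local statement: a $0$-refinement modifies $\Delta$ only in a small neighborhood, and I only need to describe how $\Psi$ should be re-interpreted in that neighborhood.

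First I would recall the three $0$-refinement moves: (a) subdivision of an edge $e$ into an edge path $e_1 e' e_2$, where $e_1, e_2$ are $0$-edges (labeled by $1$) and $e'$ carries the original label of $e$; (b) insertion of a $0$-cell along an edge; and (c) insertion of a $0$-cell at a vertex, where in each of (b), (c) the new $2$-cell has boundary consisting entirely of $0$-edges. In every case, the underlying topological space of $\Delta$ and $\Delta'$ are homeomorphic in a way compatible with the $1$-skeleton. I would therefore define $\Psi'\co P^{(2)}\to\Delta'$ by first composing $\Psi$ with this homeomorphism, and then locally rerouting any image edge that passed through $e$ so that it now traverses $e_1 e' e_2$ (in case (a)), or adjusting similarly in cases (b), (c) when $\Psi$ meets the affected vertex.

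Next I would verify the invariants. The domain $P$ and its cellular structure are untouched, so the number of pieces, edges, and vertices of the partition is preserved tautologically. For the mesh: recall $\mesh(\Psi) = \mesh(\theta\comp\Psi)$, and under the canonical map $\theta$ the $0$-edges of $\Delta'$ collapse to stationary edges in $\Gamma(G,S)$ because their label is $1$. Thus $\theta\comp\Psi'$ and $\theta\comp\Psi$ have the same image paths in $\Gamma(G,S)$ up to insertion of stationary edges, so the length of each piece $\Pi(\partial D_i)$ (and hence the mesh) is unchanged. For the same reason, reading labels along any $\Psi'$-edge and removing the inserted $1$'s returns the original label of the corresponding $\Psi$-edge, so the ``labels of edges (after removing any possible $1$'s)'' condition holds. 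In the geodesic (respectively $t$-shortest) case this is exactly what is needed, since geodesicity and $t$-shortness depend only on the label read in $\Gamma(G,S)$, which is unaffected by $0$-edges.

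Finally, I would check that $\Psi'(D)$ remains a reduced subdiagram for each closed $2$-cell $D$ of $P$. A $0$-refinement only introduces $0$-cells and $0$-edges; it does not create any new pair of mutually inverse proper $2$-cells sharing an edge, and it does not make any existing non-cancellable pair into a cancellable one. Hence the reducedness condition from the definition of a partition is automatic. The one mildly delicate point—and the only place where care is needed—is the local rerouting in case (a) when an edge of $P$ already traversed $e$ in both directions (so that the $0$-edges on the two sides must match up coherently); but since $0$-edges are indexed by the $\Delta'$-structure and our rerouting is forced by the homeomorphism of underlying spaces, this is unambiguous. The $h$-partition (homeomorphism) variant follows in the same way, as rerouting through a $0$-edge is a local isotopy.
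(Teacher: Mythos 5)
Your proposal is correct and follows exactly the route the paper intends: the paper offers no written proof beyond the remark that the lemma ``follows trivially by considering each of the three types of $0$-refinements,'' and your case-by-case check of the three moves, together with the observation that $0$-edges collapse under $\theta$ and hence leave the mesh and the labels (after deleting $1$'s) unchanged, is precisely that argument carried out in detail. Nothing further is needed.
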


\section{HNN extensions with free associated subgroups}\label{section HNN}

Let $G$ be a multiple HNN extension of a free group $F$ with free associated subgroups.  Then $G$ has a presentation
$$\pres{ A\cup\{t_i\} }{ \{u^{t_i}_{i,s} = v_{i,s} \} \text{ for } \ i = 1, . . . , k \text{ and } s = 1, . . . , j_i }$$
where $U_i = \langle u_{i,1}, . . . , u_{i,j_i}\rangle, V_i = \langle v_{i,1}, . . . , v_{i,j_i}\rangle$ are free subgroups with free generating sets $\{u_{i,j}\}$, $\{v_{i,j}\}$ respectively and $t_i$ are stable letters.  We will use $\pres{S}{R}$ to denote this presentation for $G$ which we will fix  throughout Section \ref{section HNN}.  Let $$K = \max\{|u_{i,1}|_F, . . . , |u_{i,j_i}|_F, |v_{i,1|_F}, . . . , |v_{i,j_i}|_F\}.$$
We will also fix the constant $K$ throughout this section.  To simplify notation, we will frequently refer to $t_i$-bands in diagrams over $\pres{S}{R}$ as just $t$-bands when the specific $i$ is inconsequential.

\begin{lem}\label{cutlemma}\ulabel{cutlemma}{Lemma}
Let $\t$ be a $t$-band in a van Kampen diagram $\Delta$. Then $\Delta$ can be  modified while preserving the numbers of cells and the boundary label of $\Delta$ such that the label of $\top\t$ and $\bot\t$ are freely reduced words.
\end{lem}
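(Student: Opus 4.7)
The plan is to eliminate free cancellations from the labels of $\top{\t}$ and $\bot{\t}$ one at a time by a local fold on $\Delta$, iterating until both labels are freely reduced.

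First I pinpoint where a cancellation can arise. Each cell $\pi_j$ of $\t$ is a copy of a relator, so the label of its top subpath $p_j$ (respectively bottom subpath $q_j$) is a cyclic subword of $r^{\pm 1}$, which is freely reduced in the alphabet of $A$-letters. Consequently any non-reduced pair of consecutive edges in $\top{\t}$ must straddle the interface between successive cells $\pi_j$ and $\pi_{j+1}$: the last edge $f_1$ of $p_j$ and the first edge $f_2$ of $p_{j+1}$ share a vertex $v$ lying on the common $t$-edge $e_j$ and carry mutually inverse labels. Since $\top{\t}$ has no backtracking by construction, $f_1$ and $f_2$ are distinct edges in $\Delta$.

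The local surgery is a fold. After a $0$-refinement that separates $v$ slightly along $e_j$, identify $f_1$ with the reverse of $f_2$, producing a single oriented $a$-edge shared by $\pi_j$ and $\pi_{j+1}$. The fold is legal because $f_1$ and $f_2$ already meet at $v$ and carry the same oriented label; it does not create or destroy any 2-cell, so the cell count is preserved, and $\pi_j, \pi_{j+1}$ now share the concatenation of $e_j$ with the folded edge in place of just $e_j$, keeping the diagram planar. The key point is that $\partial\Delta$ is unchanged as a labeled edge path: if the cancelling edges lay on $\partial\Delta$, the boundary after the fold traverses the folded edge once in each direction at that spot, contributing exactly the word $aa^{-1}$ to $\lab{\partial\Delta}$ as before, while if the edges were interior to $\Delta$ the boundary is untouched entirely. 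Meanwhile $f_1$ and $f_2$ now constitute backtracking in the combinatorial boundary of $\t$, so they are deleted when passing to $\top{\t}$, and $\lab{\top{\t}}$ has one fewer free cancellation.

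Iterating the fold for each remaining cancellation in $\lab{\top{\t}}$ or $\lab{\bot{\t}}$ terminates, because each step strictly decreases the total edge count of $\top{\t}\cup\bot{\t}$. The main obstacle is to confirm that the fold yields a bona fide van Kampen diagram in every configuration; special care is required when the cancellation lies at the first or last cell of $\t$ (so that $\t$ abuts $\partial\Delta$ asymmetrically) and when successive folds cause $\pi_j$ and $\pi_{j+1}$ to share a long common subarc rather than a single edge. In each case, performing the preliminary $0$-refinement keeps the identification strictly local and planar, so that the modified $\Delta$ remains a valid van Kampen diagram with the same number of $2$-cells and the same outer boundary label.
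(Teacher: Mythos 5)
Your localization of the cancellations to the interfaces between consecutive cells of $\t$ is right, and the termination argument is fine, but the surgery itself does not work: folding $f_1$ onto $f_2^{-1}$ is not a planar operation in the case the lemma actually has to handle, namely when the path $f_1f_2$ is interior to $\Delta$. Each of $f_1$ and $f_2$ already lies on the boundary of a band cell ($\pi_j$, resp.\ $\pi_{j+1}$); if either edge also bounds a $2$-cell of $\Delta$ on its far side (and generically both do, since $\botc{\t}$ is usually an interior path), then after the identification the single folded edge would be incident to three or more $2$-cells, which cannot happen in a planar diagram. Your own description already betrays the problem in the boundary case as well: you assert both that the folded edge becomes an edge shared by $\pi_j$ and $\pi_{j+1}$ and that $\partial\Delta$ traverses it once in each direction, but an edge shared by two $2$-cells is interior and is not traversed by the boundary path, so in that configuration the fold in fact deletes $uu^{-1}$ from $\lab{\partial\Delta}$. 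The $0$-refinement you invoke splits the vertex $v$ but leaves the far-side cells attached along the full length of $f_1$ and $f_2$, so it removes neither obstruction.

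The missing idea is to cut before gluing. The paper's proof is the diamond move of Collins and Huebschmann: one cuts $\Delta$ open along the length-two subpath of $\botc\t$ labeled $uu^{-1}$, duplicating the two edges and the middle vertex and opening an empty diamond-shaped hole; the band side and the far side of the path now border different copies of the edges, so the subsequent re-identification of the four sides of the hole (in the crossed pairing dictated by the labels) is performed across an empty region and stays planar. This turns the offending $uu^{-1}$ into backtracking of $\botc{\t}$ that disappears in $\bot\t$, reattaches whatever lay on the far side along the shortened path, and visibly preserves both the number of cells and $\lab{\partial\Delta}$. Without this cutting step your induction cannot get past the first interior cancellation.
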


\begin{proof}
If $\lab{\bot\t} = w_1 u u^{-1} w_2$, then we may cut $\Delta$ along the subpath of $\bot\t$ labeled by $u u^{-1}$ and re-identify them as in Figure \ref{diamondmove}.  This is the so called \emph{diamond move} (see \cite{CollinsHuebschmann}).  A similar process can be performed for $\top\t$.
\end{proof}

\begin{figure}[h]
\centering
\def\svgwidth{\columnwidth}
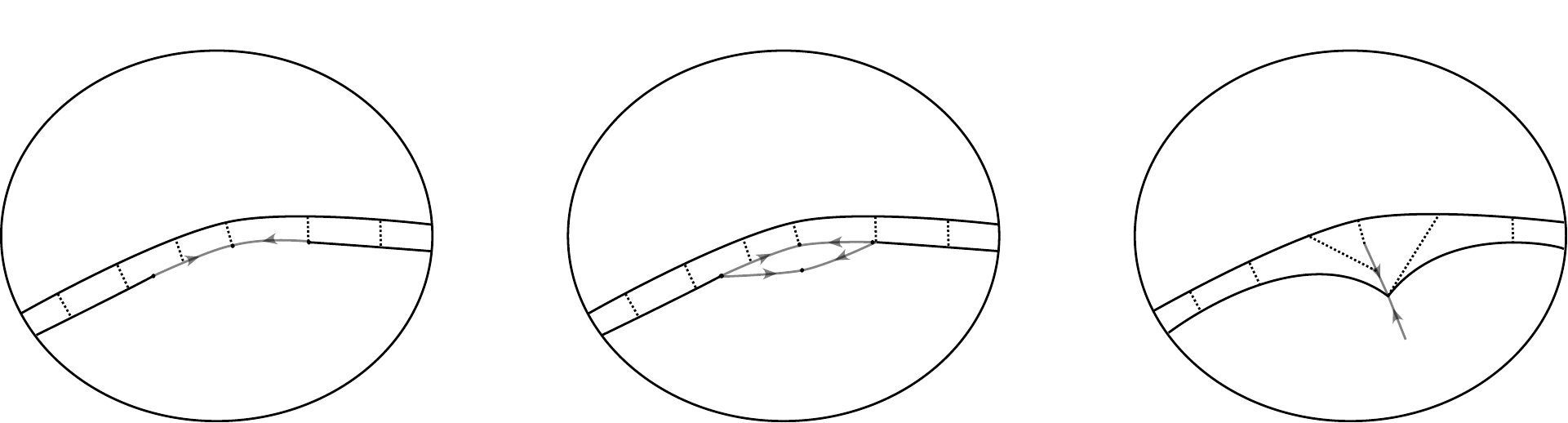\caption{Modifying $\Delta$ to insure that the label of the boundary of a $t$-band is freely reduced.}\label{diamondmove}
\end{figure}

\begin{lem}\label{correct}\ulabel{correct}{Lemma}
Suppose that $\t$ is a reduced $t$-band in a van Kampen diagram $\Delta$ over $\pres{S}{R}$ endowed with the edge metric.  Then there exist an $L$ such that $\top\t$ is in the $L$-neighborhood of $\bot\t$ where $L$ is a constant depending only on the associated subgroups.
\end{lem}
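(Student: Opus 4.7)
My plan is to bound the distance in two steps: first a cell-by-cell estimate placing every vertex of $\top\t$ within $K+1$ of $\botc\t$, and then an argument that controls the distance from vertices of $\botc\t$ to $\bot\t$.

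For the first estimate, each cell $\pi_j$ of $\t$ corresponds to a relator $t_i^{-1}u_{i,s_j}^{\pm 1}t_i(v_{i,s_j}^{\pm 1})^{-1}$, so its boundary has length at most $2K+2$ (two $t$-edges plus top path $p_j$ and bottom path $q_j$, each of length at most $K$), and hence has diameter at most $K+1$ in $\Delta$. Since $\top\t \subset \topc\t = p_1 \cdots p_n$, every vertex $v$ of $\top\t$ lies on some $p_j$ and is therefore within $K+1$ of every vertex of $q_j \subset \botc\t$.

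For the second estimate, passing from $\botc\t$ to $\bot\t$ deletes maximal backtracking subpaths $\alpha\alpha^{-1}$; the edges so removed organize into a forest of trees in $\Delta$ rooted at vertices of $\bot\t$. A vertex at depth $d$ in such a tree is visited by $\botc\t$ as the midpoint of a closed out-and-back subpath of length $2d$ whose label has the form $ww^{-1}$ with $|w|=d$. Since each $v_{i,s}$ is freely reduced, the turning point cannot lie in the interior of a single cell's bottom path (this would produce a forbidden $xx^{-1}$ inside the freely reduced word $v_{i,s_j}^{\epsilon_j}$), so it must sit at a junction between two cells $\pi_{j_m}$ and $\pi_{j_m+1}$. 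If $d$ exceeds $K$ then $w$ contains a full cell's bottom path and the mirror structure of $ww^{-1}$ forces $q_{j_m+1} = q_{j_m}^{-1}$ as words in $A$; combined with the free-basis hypothesis on $\{v_{i,s}\}$ (so that such an equality of words in $A$ forces equality as elements of $V_i$), this makes $\pi_{j_m}$ and $\pi_{j_m+1}$ a cancelable pair, contradicting reducedness of $\t$. Hence $d \leq K$, and every vertex of $\botc\t$ is within $K$ of $\bot\t$.

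Combining the two estimates, every vertex of $\top\t$ is within $(K+1) + K = 2K+1$ of $\bot\t$, so $L = 2K+1$ works and depends only on $K$, which in turn depends only on the chosen generators of the associated subgroups. The main technical obstacle is the mirror analysis in the second estimate: one must handle the case where adjacent cells have different lengths (so that the mirror pattern only partially matches a single cell on one side and one must chase the matching through several cells), and the key use of the free-basis hypothesis is precisely to convert the forced word-level equality in $F(A)$ into the cancelable-pair contradiction at the band level.
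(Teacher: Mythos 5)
Your first estimate (every vertex of $\top\t$ lies within $K+1$ of $\botc\t$, via the perimeter of a single cell) is exactly the paper's first step and is fine. The gap is in the second estimate, and it is not merely the ``technical obstacle'' you set aside at the end --- it is the heart of the lemma. Two things go wrong. First, the structural claim that a vertex at depth $d$ of a backtracking excursion is the midpoint of a subpath of length $2d$ labeled $ww^{-1}$ is false: a maximal backtracking excursion of $\botc\t$ traces out a \emph{tree} in $\Delta$, and a vertex at depth $d$ (even the deepest leaf) need only sit at the center of a short exact mirror, the depth being accumulated through nested sub-excursions (compare the freely trivial word $a^D\, b\, b^{-1}\, a^{-D}$, whose deepest vertex has local mirror of length one). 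So bounding the length of exact mirrors does not bound the depth. Second, even the mirror analysis does not close: when the mirror runs across cell junctions with $|v_{i,s_{j_m}}|\neq |v_{i,s_{j_m+1}}|$, the matching only forces an initial segment of one bottom label to equal the inverse of a terminal segment of the other, which yields no cancelable pair; chasing the matching through several cells is precisely where a bound of $K$ fails.

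What actually controls the depth is the word length \emph{in the free basis of the associated subgroup} of certain short elements, not $K$. The paper's argument: a maximal backtracking subpath $p$ of $\botc\t$ has freely trivial label $w_1 u_{j,s_1}^{\epsilon_1}\cdots u_{j,s_r}^{\epsilon_r}w_2$ with $w_1,w_2$ proper segments of the flanking generators, so the element $g= u_{j,s_0}^{\epsilon_0}\cdots u_{j,s_{r+1}}^{\epsilon_{r+1}}$ of the associated subgroup satisfies $|g|_G\le 2K$; since the band is reduced this is a reduced word in the free basis, so $r+2$ equals the basis word length of $g$ and is bounded by $L'=\max\{\,\text{basis length of } g : g \text{ in an associated subgroup},\ |g|_G\le 2K\,\}$, whence $|p|\le L'K$ and $L=L'K+K+1$. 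The constant $L'$ is finite but is \emph{not} bounded by $2$ or by any universal multiple independent of the chosen basis: for the free basis $\{a^N, a^N b\}$ of $\langle a^N,b\rangle\le F(a,b)$ one has $K=N+1$ while $b^k$ has basis length $2k$ and ambient length $k$, and partial products of reduced basis words with values of ambient length at most $2K$ (such as $b^{-n}a^Nb^n$ with $n\approx N/2$) reach ambient length roughly $3K$. So your claimed bound $L=2K+1$ is not established by the argument given, and the correct bound must be allowed to depend on this local distortion constant of the associated subgroups --- which is exactly the latitude the statement of the lemma grants.
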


\begin{proof}
The lemma is trivial if you are considering $\topc\t$ and $\botc\t$ in placy of $\top\t$ and $\bot\t$. So we will prove the lemma by finding a bound on the diameter of the backtracking that was removed to obtain $\top\t$.

Recall that $\{u_{i,1}, . . . , u_{i,j_i}\}, \{ v_{i,1}, . . . , v_{i,j_i}\}$ are free generating sets for the associated subgroups where $u_{i,k},v_{i,k}$ are words in the alphabet $A$.   For the purposes of this lemma; let $U$ be the disjoint union of $ \langle u_{i,1}, . . . , u_{i,j_i}\rangle$ and $\langle v_{i,1}, . . . , v_{i,j_i} \rangle $  and if $g\in U$, let $|g|_s$ denote the length in the associated subgroup.  Let

$$L' = \max\{ |g|_s \ | \ g\in U \text{ and } |g|_G\leq 2K \}.$$

Fix $\t$ a reduced $t$-band in $\Delta$ and $v$ a vertex on $\top\t$.  Then there exists a vertex $v'$ on $\botc\t$ such that $\d(v,v')\leq K+1$.   Suppose that $p$ is a maximal subpath of $\botc\t$ which contains the vertex $v'$ and has freely trivial label in $F(A)$.  We will assume (without loss of generality) that $\botc\t$ is labeled by words from $\{u_{i,1}, . . . , u_{i,j_i}\}$.  Then for some $j$, $\lab p = w_1 u_{j,s_1}^{\epsilon_{1}}\cdots u_{j,s_r}^{\epsilon_{r}}w_2$ where $w_1$ is a terminal segment of $u_{j,s_0}^{\epsilon_0}$, $w_2$ is an initial segment of $u_{j,s_{r+1}}^{\epsilon_{r+1}}$, and $\epsilon_{i}= \pm1$.  Let $g= u_{j,s_0}^{\epsilon_{0}}\cdots u_{j,s_{r+1}}^{\epsilon_{r+1}}$.  By construction $|g|_G$ is at most $2K$ and in $U$.  Thus $|g|_s \leq L'$.  This implies that $v'$ is at most $L'K$ from a vertex of $\bot\t$.

Thus $v$ is at most $L= L'K + K + 1$ from a vertex of $\bot\t$ which completes the lemma.

\end{proof}

The following lemma is a correction of a lemma by Olshanskii and Sapir in \cite{OS1}.

\begin{lem}\label{diamater}\ulabel{diameter}{Lemma}There exists a constant $L$ such that every diagram over $\pres{S}{R}$ which has no $t$-annuli and all $t$-bands are reduced has diameter no greater than $\frac{3L|\partial\Delta|}{2}$.
\end{lem}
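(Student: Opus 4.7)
The key structural fact is that since the base group $\langle A\rangle$ is free, every $2$-cell of $\Delta$ is a $t$-cell and hence lies in a unique maximal $t$-band. Because $\Delta$ contains no $t$-annulus, every maximal $t$-band begins and ends at a $t$-edge of $\partial\Delta$, so the number $B$ of maximal $t$-bands satisfies $B\le|\partial\Delta|/2$. Lemma \ref{correct} allows us to cross any reduced $t$-band transversally (from a vertex on $\top{\t}$ to a vertex on $\bot{\t}$) along a path of length at most $L$.

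The plan is to cut $\Delta$ along the medians of the $B$ maximal $t$-bands, producing subregions $D_0,\dots,D_r$. Since the only relators of $\pres{S}{R}$ are the HNN relations, every $2$-cell of $\Delta$ is split by some median, so no $D_i$ contains a $2$-cell. Each $D_i$ is therefore a planar, simply connected $1$-complex, i.e.\ a finite tree in the combinatorial edge metric. The natural dual graph, whose vertices are the regions $D_i$ and whose edges are the maximal $t$-bands (each joining the two regions it borders), is itself a tree on at most $B+1$ nodes because $\Delta$ is a disk cut by disjoint boundary-to-boundary arcs.

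Given two vertices $v,w$ of $\Delta$, I would construct a connecting path as follows. Take the unique dual-tree path from the region containing $v$ to the region containing $w$; in each visited region $D_i$, traverse the tree-geodesic between the incoming and outgoing transition points; and at each transition cross the relevant $t$-band transversally using the length-$L$ shortcut supplied by Lemma \ref{correct}, rather than walking longitudinally along the potentially very long arcs $\top{\t}$ or $\bot{\t}$. The transversal crossings contribute at most $LB\le L|\partial\Delta|/2$ to the total path length.

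The main obstacle—and the delicate step—is bounding the within-region tree distances so that no long $\top{\t}$ or $\bot{\t}$ arc is ever traversed longitudinally. The idea is that any longitudinal excursion along such an arc can always be replaced by crossing $\t$ into the adjacent region (paying $L$) and returning, so an optimal route stays on arcs of $\partial\Delta$. An amortized bookkeeping argument, exploiting the tree structures of both the dual graph and of each $D_i$, then bounds the sum of the in-region walks by $|\partial\Delta|$. Combining this with the crossing cost yields $\operatorname{diam}(\Delta)\le|\partial\Delta|(1+L/2)\le\tfrac{3L}{2}|\partial\Delta|$ after harmlessly enlarging $L$ to at least $2$ if necessary.
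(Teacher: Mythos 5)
Your structural setup is fine and consistent with the paper's viewpoint: since the base group is free every $2$-cell lies in a unique maximal $t$-band, the absence of $t$-annuli makes the medians disjoint boundary-to-boundary arcs, the dual graph of the complementary regions is a tree, and \ref{correct} lets you cross any reduced band at cost $L$, so the total crossing cost is at most $L|\partial\Delta|/2$. The problem is the step you yourself call ``the delicate step,'' which is in fact the entire content of the lemma and is not closed by your sketch. The regions $D_i$ are trees, but their edges include the paths $\top{\t}$ and $\bot{\t}$ of the bands bordering them, and the total length of these is controlled by the \emph{area} of $\Delta$ (each cell contributes up to $2K$), not by $|\partial\Delta|$; so the tree-distance inside a region between an entry and an exit point has no a priori bound in terms of $|\partial\Delta|$, and the assertion that the in-region walks sum to at most $|\partial\Delta|$ is exactly what must be proved. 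The mechanism you propose --- replacing a longitudinal excursion along $\top{\t}$ by crossing $\t$ into the adjacent region and returning --- cannot do this: crossing a band and crossing back lands you within $2L$ of where you started and makes no longitudinal progress, so it is not a substitute for travel along a long top or bottom, and ``an optimal route stays on $\partial\Delta$'' is unsupported because you still have to get from an interior vertex to $\partial\Delta$ cheaply in the first place.

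The paper closes precisely this gap by a peeling induction rather than a dual-tree routing argument. Since $\Delta$ has no $t$-annuli there is always a band $\t$ with $\topc\t$ contained in $\partial\Delta$ (Lemma 2.1 of \cite{OS1}); by \ref{correct} every vertex of $\bot\t$ is then within $L$ of $\partial\Delta$; removing $\t$ yields a diagram with one fewer band whose boundary lies in the $L$-neighborhood of $\partial\Delta$, and induction on the number $s$ of bands shows that \emph{every} vertex of $\Delta$ is within $Ls\le L|\partial\Delta|/2$ of $\partial\Delta$. Two arbitrary vertices are then joined by going out to the boundary, around the boundary, and back in, giving $L|\partial\Delta|+|\partial\Delta|/2\le\tfrac{3L}{2}|\partial\Delta|$. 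This ``every vertex is boundary-close'' statement is the amortized bound your argument is missing; without it, or an equivalent, your proof does not go through.
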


\begin{proof}  Let $L$ be the constant from \ref{correct}.

Let $s$ be the number of $t$-bands in $\Delta$ and $n=|\partial\Delta|$. Then $s\leq\frac n2$. There exists a $t$-band $\t$ such that (without loss of generality) $\topc\t$ is contained in $\partial\Delta$ (see Lemma 2.1 of \cite{OS1}). Then $\Delta$ is obtained by gluing $\t$ and a diagram $\Delta_1$ with $s-1$ $t$-bands which satisfies the same hypothesis. Every vertex on a $\bot\t$ can be connected to the boundary of $\Delta$ by a path of length at most $L$. By induction on $s$, we can deduce that every vertex inside $\Delta$ can be connected to the boundary of $\Delta$ by a path of length at most $Ls\leq \frac{Ln}{2}$.
Hence the diameter of $\Delta$ is at most $\frac{3Ln}{2}$.
\end{proof}

\begin{lem}\label{diameter2}\ulabel{diameter2}{Lemma}
Let $\Delta$ be a van Kampen diagram with no $t$-annuli, every $t$-band reduced, and $\gamma\co[0,1]\to \Delta$ be a parametrization of $\partial \Delta$. Suppose that $0= t_0< t_1<\cdots< t_k<t_{k+1}=1$ is a partition of the unit interval and $I$ a subset of $\{0,\cdots, k\}$ such that $\gamma$ restricted to $[t_i,t_{i+1}]$ is a $t$-shortest path for $i \in I$.  Then $\theta(\Delta)$ has diameter no greater than
$$\frac{5L}{2}\Bigl(\sum_{i\not\in I} \Bigl|\gamma|_{[t_i,t_{i+1}]}\Bigr| + \sum_{i\in I} \d\bigl(\theta\comp\gamma(t_i),\theta\comp\gamma(t_{i+1})\bigr)\Bigr)$$
where $L$ is the constant for \ref{correct} and $\theta$ is the canonical map into the Cayley graph.
\end{lem}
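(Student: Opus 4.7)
The strategy is to adapt the proof of \ref{diameter}, replacing the crude bound $s\le n/2$ on the number of $t$-bands with a sharper estimate that exploits the $t$-shortest structure on subpaths indexed by $I$, and then invoking an auxiliary construction that converts the improved $t$-band count into a diameter bound in terms of $N$.

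First I bound the number $s$ of $t$-bands in $\Delta$. Since $\Delta$ has no $t$-annuli, every $t$-band meets $\partial\Delta$ in exactly two $t$-edges, so $2s$ equals the total count of $t$-edges on $\partial\Delta$. For $i\notin I$, the subpath $\gamma|_{[t_i,t_{i+1}]}$ contributes at most $\bigl|\gamma|_{[t_i,t_{i+1}]}\bigr|$ such edges. For $i\in I$, the label $w_i$ of $\gamma|_{[t_i,t_{i+1}]}$ is $t$-shortest, so by the defining identity $|w_i|_G=|w_i|_t+\sum_\ell|v_\ell|_G$ we obtain $|w_i|_t\le|w_i|_G=\d(\theta\gamma(t_i),\theta\gamma(t_{i+1}))$. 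Summing over $i$ gives $2s\le N$, so $s\le N/2$. The inductive $t$-band peeling argument from the proof of \ref{diameter}, combined with \ref{correct}, then shows that every vertex of $\Delta$ lies within edge-metric distance $Ls\le LN/2$ of $\partial\Delta$.

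For each $i\in I$ I fix a geodesic word $\sigma_i$ in $S$ representing the same element of $G$ as $w_i$, with $|\sigma_i|=\d(\theta\gamma(t_i),\theta\gamma(t_{i+1}))$. Since $w_i\sigma_i^{-1}$ is trivial in $G$ it bounds a van Kampen diagram $F_i$ over $\pres{S}{R}$. Gluing each $F_i$ onto $\Delta$ along $\gamma|_{[t_i,t_{i+1}]}$ and then reducing cancellable pairs and removing $t$-annuli produces a reduced diagram $\Delta^*$ with no $t$-annuli whose boundary has length exactly $N$. Every vertex of $\Delta$ maps to a vertex of $\Delta^*$ with the same $\theta$-image, since the reduction moves only identify vertices whose $\theta$-images already coincide. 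Applying \ref{diameter} to $\Delta^*$ then yields $\diam{\theta(\Delta)}\le\diam{\Delta^*}\le 3LN/2\le 5LN/2$.

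The main obstacle is the auxiliary construction: one must verify that the gluing and subsequent reduction produce a valid reduced van Kampen diagram whose boundary retains length exactly $N$, without collapsing vertices of $\Delta$ whose $\theta$-images differ. A more direct alternative that avoids the auxiliary diagram would walk around $\partial\Delta$ between two boundary vertices, taking the Cayley-graph shortcut $\d(\theta\gamma(t_i),\theta\gamma(t_{i+1}))$ across each fully traversed subpath with $i\in I$ and controlling the two partial-subpath contributions via \ref{almostgeodesic} together with the $t$-band structure; this alternative presumably accounts for the looser constant $5L/2$ that appears in the statement in place of the $3L/2$ coming from the auxiliary-diagram route.
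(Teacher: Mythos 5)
Your count of the $t$-bands is correct and is exactly the paper's (implicit) first step: since $\Delta$ has no $t$-annuli each maximal $t$-band has two ends on $\partial\Delta$, a subpath with $i\in I$ carries at most $\d\bigl(\theta\comp\gamma(t_i),\theta\comp\gamma(t_{i+1})\bigr)$ many $t$-edges because $|w_i|_t\le|w_i|_G$ for a $t$-shortest word, and so $s\le C/2$ where $C$ is the quantity in parentheses in the statement. The gap is in your main construction. After you glue the diagrams $F_i$ onto $\Delta$ and then ``reduce cancellable pairs and remove $t$-annuli,'' vertices of $\Delta$ are not merely identified with vertices having the same $\theta$-image: cancellation deletes cells outright, and excising a $t$-annulus discards everything inside it, so whole subdiagrams of $\Delta$ can vanish. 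Hence $\theta(\Delta)$ need not be contained in $\theta(\Delta^*)$, and the inequality $\diam{\theta(\Delta)}\le\diam{\theta(\Delta^*)}$ does not follow. You would also have to verify that the glued diagram meets the hypotheses of \ref{diameter} in the first place: the absence of $t$-annuli can be argued (no $t$-band of $F_i$ starts and ends on its $w_i$-side when $w_i$ is $t$-shortest, so every band eventually exits through some geodesic side $\sigma_j$), but reducedness of the $t$-bands across the gluing seams is not automatic.

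The paper avoids all of this by never gluing. It keeps the auxiliary diagrams $\Delta_i$ (with boundary $p_i\tilde p_i$, where $\lab{p_i}=w_i$ and $\tilde w_i$ is the geodesic word) separate, notes that no $t$-band of $\Delta_i$ can start and end on $p_i$ (so $s_i\le C/2$), and concludes via the peeling argument of \ref{diameter} and \ref{correct} that every point of $\gamma|_{[t_i,t_{i+1}]}$ has $\theta$-image within $Ls_i$ of the geodesic path $\tilde p_i$. Two points of $\theta(\Delta)$ are then compared by passing to the boundary (cost at most $2Ls\le LC$), over to the geodesic loop $\tilde w_0\tilde w_1\cdots\tilde w_k$ of length $C$ (cost at most $Ls_i+Ls_j\le LC$), and along that loop (cost at most $C/2$), which yields $2LC+C/2\le\frac{5LC}{2}$. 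So the constant $5L/2$ is not a loosened $3L/2$ coming from a cruder route; it is the sum of these three contributions, and this is essentially the ``more direct alternative'' you gesture at in your last paragraph but do not carry out.
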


\begin{proof}

Let $C = \sum_{i\not\in I} \bigl|\gamma|_{[t_i,t_{i+1}]}\bigr| + \sum_{i\in I} \d\bigl(\theta\comp\gamma(t_i),\theta\comp\gamma(t_{i+1})\bigr)$.
By the same argument as in \ref{diameter}, every vertex of $\Delta$ can be connected to a vertex on $\partial\Delta$ by a path of length at most $Ls$ where $s$ is the number of $t$-bands in $\Delta$.

For $i\in I$, let $w_i = \lab{\gamma|_{[t_i,t_{i+1}]}}$ and $\tilde w_i$ be a geodesic word obtained by replacing each maximal $a$-subpath of $w_i$ by a geodesic word.  For $i\not\in I$, let $w_i = \lab{\gamma|_{[t_i,t_{i+1}]}} = \tilde w_i$.  Then $C = |\tilde w_0\tilde w_1\cdots \tilde w_k|$.  Fix $\Delta_i$ a reduced van Kampen diagram with $\partial\Delta_i = p_i\tilde p_i$ where $\lab{p_i} = w_i$ and $\lab{\tilde p_i} = \tilde w_i^{-1}$.  Let $s_i$ be the number of $t$-bands in $\Delta_i$.  Since no $t$-band of $\Delta_i$ can start and stop on $p_i$; hence, $s,s_i \leq \frac C2$. By repeating the arguments from \ref{diameter}, we can see that any point in $\Delta_i$ is at most $Ls_i$ from a point on $\tilde p_i$.  Hence, if $x,y$ are two points on $\partial \Delta$, then $\d\bigl(\theta(x), \theta(y)\bigr)\leq Ls_i + Ls_j+\frac C2\leq \frac{LC}2 + \frac{LC}2 + \frac C2$.

If $x,y$ are two points in $\Delta$; $\d\bigl(\theta(x), \theta(y)\bigr)\leq 2(Ls) + (Ls_i + Ls_j+\frac C2)$.  Therefore $\theta(\Delta)\leq LC + (LC + \frac C2)\leq \frac{5LC}2$.

\end{proof}

\begin{rmk}\label{induce}
Let $\Pi \co P^{(0)}\to \Gamma(G,S)$ be a partition of a loop $\gamma$ in $\Gamma(G,S)$.  We can extend $\Pi$ to $P^{(1)}$ as in \ref{extend}; but instead of mapping the interior edges of $P$ to geodesics, we will map the interior edges to $t$-shortest paths in $\Gamma(G,S)$.  We can label the edges of $P^{(1)}$ with the label of their image.  Then we can fill each piece with a reduced circular van Kampen diagram.  This produces a van Kampen diagram with boundary label equal to the $\lab{\gamma}$ and $\Pi$ induces a canonical homeomorphism from $P^{(2)}$ onto this van Kampen diagram.  Thus every partition $\Pi$ of $\gamma$ induces a $t$-shortest partition $\Psi$ of a diagram such that  $\Pi = \theta\circ \Psi$.  Then by \ref{diameter2}, each subdiagram corresponding to a piece has diameter at most $\frac{5Ln}{2}$.
\end{rmk}

\subsection{Removing $t$-bands from partitions}\label{subsection removing_t-bands}

\begin{defn}  Suppose $\Psi\co P^{(2)} \to \Delta$ is a $t$-shortest partition of a van Kampen diagram $\Delta$.
A $t$-band $\t$ \emph{crosses} a $\Psi$-edge $e$, if $e$ contains a $t$-edge from $\t$.
If $\t$ is a $t$-annulus which crosses a $\Psi$-edge $e$, we will call the end points of the corresponding $t$-edge, the \emph{crossing vertices} of $\t$.
\end{defn}

\begin{lem}\label{crosses}\ulabel{crosses}{Lemma} If $\Psi$ is a $t$-shortest partition of $\Delta$ and $\t$ is a $t$-band in $\Delta$, then $\t$ crosses each $\Psi$-edge at most once.
\end{lem}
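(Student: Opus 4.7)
The plan is to prove the lemma by contradiction. Suppose some $t_i$-band $\t$ contains two distinct $t$-edges $f_1, f_2$ that both lie on the same $\Psi$-edge $e$. Decompose
$$\lab{e} \;=\; w_0 \cdot t_i^{\epsilon_1} \cdot w_1 \cdot t_i^{\epsilon_2} \cdot w_2,$$
so that the two displayed $t_i^{\pm 1}$-letters correspond to the traversals of $f_1$ and $f_2$ by $e$. Since $\Psi$ is a $t$-shortest partition, $\lab{e}$ is by hypothesis a $t$-shortest word, so it will suffice to exhibit a word equal to $\lab{e}$ in $G$ with strictly fewer $t$-letters.

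The first step is to establish the coset structure of a $t_i$-band. Walking cell-by-cell through $\pi_1,\dots,\pi_n$ and using the defining relation $u_{i,s}^{t_i}=v_{i,s}$, every $t$-edge of $\t$ maps under $\theta$ to a $t_i$-edge whose bottom endpoint lies in a single fixed coset $vU_i$ and whose top endpoint lies in $vt_iV_i$. Writing $\theta(f_j)$ as the edge from $a_j\in vU_i$ to $a_jt_i\in vt_iV_i$, the vertex of $\theta(e)$ immediately after crossing $f_j$ is $a_jt_i$ when $\epsilon_j=+1$ and $a_j$ when $\epsilon_j=-1$; an analogous description holds for the vertex just before crossing $f_2$.

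The second step is a brief case analysis on the four sign combinations $(\epsilon_1,\epsilon_2)$. In each case the middle subpath $\theta(e')$ of $e$ has endpoints determined by the description above, and one reads off the element of $G$ represented by $w_1$. For instance, when $(\epsilon_1,\epsilon_2)=(+1,-1)$, $\theta(e')$ runs from $a_1t_i$ to $a_2t_i$, so $w_1$ represents an element $h$ of $V_i$, and the HNN relation gives $t_ih t_i^{-1}\in U_i\subset F$, which is expressible as a pure $A$-word. When $(\epsilon_1,\epsilon_2)=(+1,+1)$, $w_1$ represents an element of the form $t_i^{-1}u$ with $u\in U_i$, and $t_iw_1t_i=ut_i$ is representable by an $A$-word followed by one $t_i$. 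The cases $(-1,+1)$ and $(-1,-1)$ are symmetric. In every case, substituting the shorter expression for the subword $t_i^{\epsilon_1}w_1t_i^{\epsilon_2}$ into $\lab{e}$ yields a word equal to $\lab{e}$ in $G$ with at least two fewer $t$-letters, contradicting $t$-shortness.

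The main obstacle I anticipate is carefully verifying the coset structure of the band, since the fact that every $t$-edge of $\t$ maps between the same pair of $F$-cosets is exactly what forces $w_1$ to lie in (or one $t_i^{\pm 1}$ away from) an associated subgroup, which powers the rewriting in step two. A virtue of this algebraic approach is that it sidesteps any topological argument about the band's median separating $\Delta$, so there is no case distinction between annular and non-annular bands and no delicate point concerning the behavior of $e$ near the band's end $t$-edges $e_0$ and $e_n$.
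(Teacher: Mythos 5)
Your argument is correct and rests on the same mechanism as the paper's one-sentence proof: a second crossing lets you rewrite the subword of $\lab{e}$ spanned by the two $t$-edges, by pushing across the band, into a word with strictly fewer $t$-letters, contradicting $t$-shortness. The paper identifies the replacement word as a subword of $\topc{\t}$ or $\botc{\t}$, which is precisely your opposite-orientation case; your coset bookkeeping makes that algebra explicit, and your four-way sign analysis additionally covers same-orientation crossings, which the paper's phrasing (and its appeal to planarity) quietly sets aside --- these are in fact ruled out for consecutive crossings whenever the median of $\t$ separates the diagram, as it does for the $t$-annuli to which the lemma is actually applied, so nothing is lost either way. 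One harmless slip: in the $(\epsilon_1,\epsilon_2)=(+1,+1)$ and $(-1,-1)$ cases your replacement $ut_i^{\pm1}$ is only guaranteed to save one $t$-letter rather than two (namely when $w_1$ is $t$-free), but any strict reduction already contradicts the minimality of $|\lab{e}|_t$, so the proof stands.
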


\begin{figure}[t]
\centering
\def\svgwidth{3in}
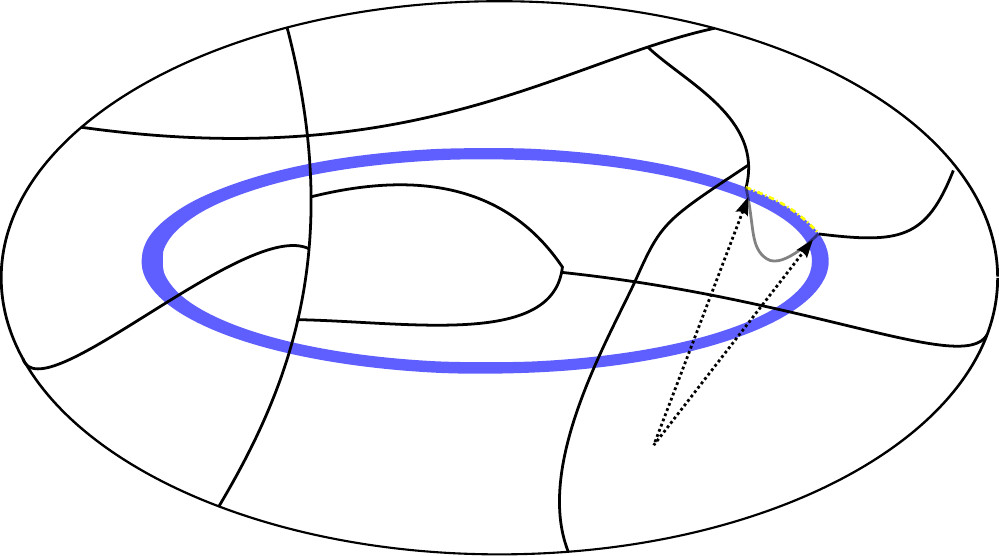\caption{A $\Psi$-edge which crosses $\t$ twice cannot be $t$-shortest.}\label{cross}
\end{figure}

\begin{proof}
If $\t$ crossed a $\Psi$-edge $e$ twice, then $e$ would contain two $t$-edges and the subword of $e$ beginning and ending with these $t$-edges would be equal to a subword of $\topc\t$ or $\botc\t$.   See Figure \ref{cross}.  Thus $e$ was not $t$-shortest. (Note we are using the fact the diagrams are planar.)
\end{proof}

\begin{cor}
    Let $\Psi\co P^{(2)} \to \Delta$ be a $t$-shortest partition of $\Delta$ and $\t$ be a $t$-annulus in $\Delta$.  Then the bounded component of $P^{(2)}\backslash\Psi^{-1}(m(\t))$ contains a vertex of $P$.
\end{cor}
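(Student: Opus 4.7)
The plan is to proceed by contradiction. Suppose the bounded component $B$ of $P^{(2)}\backslash\Psi^{-1}(m(\t))$ contains no vertex of $P$; I will show this forces $\t$ into a single piece of $P$, and then contradict the reducedness of that piece using the freeness of the associated subgroups.

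The first step is purely topological. Set $c := \Psi^{-1}(m(\t))$. Because $\Psi$ is a homeomorphism sending $P$-vertices to $\Delta$-vertices and $m(\t)$ avoids the $0$-skeleton of $\Delta$ (medians meet $\Delta^{(1)}$ only at midpoints of $t$-edges), $c$ is a simple closed curve in $P^{(2)}$ avoiding the vertices of $P$. By Lemma~\ref{crosses}, $c$ meets every edge $e$ of $P$ in at most one point. Under the standing assumption, both endpoints of $e$ lie in the unbounded component of $P^{(2)}\backslash c$, and since $c$ splits $e$ into at most two connected subarcs, each of those subarcs lies in the unbounded component; hence $e$ does not enter the interior of $B$. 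The interior of $B$ is therefore disjoint from $P^{(1)}$, and being connected it lies in the closure of a single $2$-cell $D$ of $P$. Since $c$ cannot cross any edge of $\partial D$ without leaving $\bar D$, $c$ in fact lies in the interior of $D$, so $\t\subset\Psi(D)$.

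The main obstacle is the second step: converting ``$\t$ lies inside the reduced subdiagram $\Psi(D)$'' into a genuine contradiction. The plan is to pass to an innermost $t$-annulus $\t'\subset\t$ (possibly $\t'=\t$). Any $t$-band inside the bounded region of $\Delta\backslash m(\t')$ cannot terminate on the boundary of $\t'$, whose label is a word in the base group $F$; hence such a band would have to close up into a $t$-annulus there, contradicting minimality. So that bounded region is filled by a subdiagram over the free group $F$, and reading around its boundary produces a cyclic word $w$ in the free generators $\{u_{i,j}\}$ or $\{v_{i,j}\}$ of an associated subgroup, satisfying $w = 1$ in $F$. Reducedness of $\Psi(D)$ prohibits any two consecutive cells of $\t'$ from forming a cancelable pair, which is precisely the assertion that $w$ is cyclically reduced in these free generators; but since the associated subgroup is freely generated by them inside $F$, a non-empty cyclically reduced word in these generators cannot represent the identity in $F$, contradicting the existence of the filling. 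The delicate bookkeeping, which I expect to be the real technical obstacle, lies in verifying that reducedness of $\Psi(D)$ really does translate into cyclic reducedness of $w$ in the chosen free generating set — this requires careful tracking of orientations of the $t$- and $a$-edges around consecutive cells of $\t'$.
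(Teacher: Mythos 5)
Your proof is correct and is, as far as one can tell, exactly the argument the paper intends: the corollary is stated without proof, and the two ingredients you use --- Lemma \ref{crosses} to see that any $\Psi$-edge met by $c=\Psi^{-1}(m(\t))$ is met in a single transversal point, so that its two endpoints are separated by $c$ and one of them is a $P$-vertex in the bounded component; and reducedness of the pieces together with freeness of the base and associated subgroups to rule out the remaining case where the annulus lies inside a single piece --- are precisely what the surrounding definitions set up. Two small points of hygiene. First, in your opening step the assertion that both subarcs of a crossed edge lie in the unbounded component is really the observation that, under the standing assumption, $c$ cannot meet any $P$-edge at all (a transversal crossing forces the two subarcs onto opposite sides of $c$); the conclusion you draw, that $P^{(1)}$ misses the interior of the bounded component and hence $c$ lies in the interior of a single $2$-cell $D$, is the right one. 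Second, the innermost annulus $\t'$ need not be a sub-band of $\t$: it is a $t$-annulus contained in the bounded component of $\Delta\backslash m(\t)$ (the paper's ``minimal $M$-annulus''), but this does not affect the argument, since that bounded component is contained in $\Psi(D)$ and so the cancelable pair you extract from the freely trivial boundary word still violates the reducedness of the subdiagram $\Psi(D)$ required by the definition of a partition.
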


\begin{cor}\label{containsvertex}\ulabel{containsvertex}{Corollary}
    Let $\Psi\co P^{(2)} \to \Delta$ be a $t$-shortest partition of $\Delta$.  Then $\Delta$ can have at most $V$ maximal $\t$-annuli where $V$ is the number of vertices of the partition $\Psi$.
\end{cor}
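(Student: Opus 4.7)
The plan is to extend the preceding corollary (which produces a vertex of $P$ in the bounded region of $\Psi^{-1}(m(\t))$ for each $t$-annulus $\t$) to an injective assignment from the set of maximal $t$-annuli to the vertex set of $P$.

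First I would verify that distinct $t$-annuli have disjoint medians in $\Delta$. Each $2$-cell of $\Delta$ comes from one of the relators $u^{t_i}_{i,s}=v_{i,s}$; reading this relator as $t_i^{-1}u_{i,s}t_iv_{i,s}^{-1}$ shows that the only $t$-labeled edges on the boundary of the cell are the two $t_i$-edges. Hence each $2$-cell of $\Delta$ belongs to exactly one $t$-band, so distinct $t$-bands (in particular distinct $t$-annuli) share no $2$-cells. Consequently, the medians $m(\t_1)$ and $m(\t_2)$ of two distinct $t$-annuli are disjoint simple closed curves in the planar diagram $\Delta$.

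Next I would invoke the Jordan curve theorem. For any two disjoint simple closed curves in the plane, either their bounded complementary regions are disjoint or one contains the other. Suppose $\t_1$ and $\t_2$ are both maximal and that the bounded region of $m(\t_2)$ is contained in the bounded region of $m(\t_1)$; then $\t_2$ itself lies in the bounded component of $\mathbb{R}^2\setminus m(\t_1)$, contradicting the maximality of $\t_2$. Therefore the bounded regions bounded by $m(\t_1)$ and $m(\t_2)$ must be disjoint whenever $\t_1,\t_2$ are distinct maximal $t$-annuli.

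Finally, since $\Psi$ is a $t$-shortest partition, it is in particular an $h$-partition and hence a homeomorphism $P^{(2)}\to\Delta$. The $\Psi$-preimages of the pairwise disjoint bounded regions above are therefore themselves pairwise disjoint subsets of $P^{(2)}$, and the preceding corollary guarantees that each one contains at least one vertex of $P$. Choosing such a vertex for each maximal $t$-annulus produces an injection from the set of maximal $t$-annuli into the vertex set of $P$, giving the bound by $V$. The only real point of care is the cell-wise observation that the specific form of the HNN relators forces each cell to lie in a unique $t$-band; once this is established, the result is a short application of the Jordan curve theorem together with the previous corollary.
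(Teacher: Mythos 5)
Your proof is correct and fills in exactly the argument the paper leaves implicit (the corollary is stated without proof, as an immediate consequence of \ref{crosses} and the preceding corollary): each maximal $t$-annulus contributes a vertex of $P$ in the bounded region of the preimage of its median, and distinct maximal annuli have disjoint medians with non-nested (hence disjoint) bounded regions, giving an injection into the vertex set. Your cell-wise observation that each $2$-cell has exactly two $t$-edges, so distinct $t$-annuli share no cells and hence have disjoint medians, is precisely the point that makes the Jordan curve dichotomy and the maximality argument go through.
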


\begin{lem}\label{removing}\ulabel{removing}{Lemma}
Let $\Psi\co P^{(2)}\to \Delta$ be a $h$-partition of $\Delta$ with $F$ pieces where $\Delta$ is an annular diagram where the boundary components have labels which are trivial in $G$.  Suppose that $\t$ is a $t$-annulus in $\Delta$ such that $\t$ crosses each edge at most once and if $v$ is a crossing vertex of a $\Psi$-edge with vertices $e_-,e_+$, then $\d(e_-, e_+)\leq \d(e_-, v) + \d(v, e_+)$.  Let $B= \max\limits_{D\in P} \{\diam{\theta\comp\Psi(D)}\}$.

Then there exists a partition $\widetilde\Psi\co\widetilde P^{(2)}\to \Delta'$ where $\Delta'$ is obtained by removing $\t$ such that

\begin{enumerate}[(i)]
    \item $\widetilde\Psi$ has no more than $9F^2+4F$ pieces, and
    \item $\mesh{(\widetilde\Psi)}\leq \max\{3(B+2K), \mesh{(\Psi)}\}$,

\end{enumerate}
 where $K$ is the max of the word length of the generators of the associated subgroups.
\end{lem}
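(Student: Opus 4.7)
My plan is to track the interaction between the median $m(\t)$ and the partition $\Psi$, perform the standard move that removes a $t$-annulus, and then refine $\Psi$ to accommodate the cut-and-identify operation. First I would consider $\sigma = \Psi^{-1}\bigl(m(\t)\bigr)$, a simple closed curve in $P^{(2)}$; the hypothesis that $\t$ crosses each $\Psi$-edge at most once forces $\sigma$ to meet each edge of $P$ transversely in at most one point. Since $P$ has at most $3F$ edges, $\sigma$ introduces at most $3F$ new vertices, and inside each $\Psi$-piece $D$ the intersection $\sigma\cap D$ is a disjoint union of arcs joining boundary crossings, cutting $D$ into at most order $F$ sub-pieces.

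Next I would build $\Delta'$ via the standard removal of $\t$: cut $\Delta$ along the topological boundaries of $\t$, excise the band cells, and identify the two boundary paths via the $t$-edge correspondence of $\t$. Because the labels of $\top\t$ and $\bot\t$ are conjugate via the appropriate stable letter, this preserves the boundary label of $\Delta$ and decreases the number of $t$-annuli by one. The pairs of crossing vertices of $\sigma$ are then glued together across the collapsed $m(\t)$, producing candidate new vertices of the modified partition.

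For $\widetilde\Psi$ I would take as vertices the $\Psi$-vertices together with the identified crossing vertices, as edges the sub-arcs of $\Psi$-edges at the crossings plus new edges running between consecutive crossing vertices along the collapsed image of $m(\t)$, and as pieces the regions cut out by this new graph. Each new piece sits inside the union of at most three original $\Psi$-pieces connected by short paths on the image of $m(\t)$. Using the triangle-inequality hypothesis $\d(e_-,e_+)\leq \d(e_-,v)+\d(v,e_+)$ together with the fact that associated-subgroup generators have length at most $K$, these connecting paths each contribute at most $2K$, so each merged piece has diameter at most $3(B+2K)$. For the count, each $\Psi$-piece is cut into at most order $F$ sub-pieces by the arcs of $\sigma$, and after merging we control the total by $9F^2 + 4F$; the $F^2$ growth reflects that sub-pieces from neighboring $\Psi$-pieces can combine after the collapse of $m(\t)$, potentially requiring re-partitioning to keep each new piece a topological disc.

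The main obstacle is precisely this final bookkeeping. The curve $\sigma$ might be either contractible in the annulus $P^{(2)}$, in which case it bounds a disc and the enclosed region can be handled directly, or non-contractible, in which case removing it changes the topology of the ambient annulus and one must re-partition the resulting regions to restore a cell-complex structure. In both cases I would need to check that the piece count stays within the quadratic bound, and that allowing at most three neighboring $\Psi$-pieces to merge into one new piece is enough to achieve the mesh bound $3(B+2K)$. The triangle-inequality hypothesis on crossing vertices is crucial here, as it ensures that the inserted edges along the collapsed $m(\t)$ do not contribute extra length beyond $2K$ per identification, while the bound $B$ on $\diam{\theta\comp\Psi(D)}$ absorbs the contribution from each of the at most three combined sub-pieces.
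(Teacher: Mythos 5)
Your proposal has a genuine gap at its central step: the removal of the $t$-annulus. You propose to ``excise the band cells and identify the two boundary paths via the $t$-edge correspondence,'' but the labels of $\top{\t}$ and $\bot{\t}$ are different words (one in the $u_{i,j}$'s, one in the $v_{i,j}$'s) of generally different lengths, so there is no edge-to-edge identification of the two sides, and collapsing each band cell does not yield a van Kampen diagram over the presentation. The paper does something different and more global: it observes that $\Psi(\beta_\t)\subset\topc{\t}$ bounds a subdiagram of $\Delta$ (containing the band and one boundary component of the annulus) whose label is a word in $A$-letters that is trivial in $G$ --- this is exactly where the hypothesis that the boundary components of $\Delta$ have labels trivial in $G$, together with freeness of the base group, is used --- hence \emph{freely} trivial, and replaces that entire subdiagram by a simplicial tree $L_\t$ realizing a free reduction of $\lab{\topc{\t}}$. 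Your argument never uses the triviality hypothesis, which is a sign the removal step has not actually been carried out.

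This omission also undermines your quantitative bookkeeping. In the paper, the $9F^2$ arises for a specific reason: the collapse onto $L_\t$ induces a pairing of edges along $\beta_\t$ that can match proper subsegments of partition edges, so each of the at most $3F$ arcs $e_i$ of $\beta_\t$ must be subdivided at the preimages of the branch vertices of the tree $L_\t$; since $L_\t$ has at most $3F$ leaves it has at most $3F$ vertices of degree greater than $2$ (Lemma \ref{degree3vertices}), giving at most $3F$ subdivisions per arc and hence $9F^2$ new edges, each an $L_\t$-segment, plus the subdivision of the adjacent outer pieces. Your explanation that ``sub-pieces from neighboring $\Psi$-pieces can combine'' does not produce this count. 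Likewise, the mesh bound $3(B+2K)$ comes from the fact that each arc $q_i$ of $\topc{\t}$ between consecutive crossing vertices has median inside a single piece and therefore lies in the $K$-neighborhood of that piece, so the new edges (which map to subpaths of $\topc{\t}$) have endpoints within $B+2K$ of one another; the ``at most three original pieces merge'' heuristic is not what is happening and is not justified.
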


\begin{proof}
Let $\Psi\co P^{(2)}\to \Delta$ be a partition of $\Delta$ as in the statement of the lemma and let $A$ be the underline space of $P$.  Let $\Delta_A$ be the subdiagram of $\Delta$ obtained by removing all cells interior to $\topc\t$.

Let $V= \{v_1, v_2, \cdots, v_k\}$ be the set of crossing vertices of $\t$ which are contained in $\topc\t$  where the ordering is obtained by traversing $\topc\t$ in the clockwise direction.  Let $q_i$ be a subpath of $\topc\t$ between $v_i$ and $v_{i+1}$ without backtracking (where the indices are taken modulo $k$) which intersects $V$ only at $v_i, v_{i+1}$ and $m(q_i)$ the corresponding subpath of $m(\t)$.  Since $\t$ crosses each vertex at most once, $k\leq 3F$.

By construction $m(q_i)$ is contained inside of $\Psi(D)$ for some piece $D$ of $P$.  Thus $q_i$ is in the $K$-neighborhood of $\Psi(D)$ and $\diam{\theta\comp\Psi(q_i)}\leq B+2K$

\setcounter{clm}{0}
\begin{clm}  There exists a refinement $P'$ of $P$  and a partition $\Psi'\co P'^{(2)}\to \Delta$ with $\Psi'(x) =\Psi(x)$  for all $x\in P^{(1)}$ such that

\begin{enumerate}[(i)]
    \item the number of pieces of $P'$ is less than $4F$;
    \item $\mesh{(\Psi')}\leq\mesh{(\Psi)}$; and
    \item there is a simple closed curve $\beta_\t$ in $P'^{(1)}$ such that
        \begin{enumerate}
            \item $\Psi(\beta_\t)\subset \topc\t$,
            \item $\beta_\t$ has at most $3F$ edges, and
            \item if $\Psi'(x)$ is interior to $m(\t)$, then $x$ is interior to $\beta_\t$.
        \end{enumerate}

\end{enumerate}

\end{clm}

\begin{figure}
\centering
\def\svgwidth{\columnwidth}
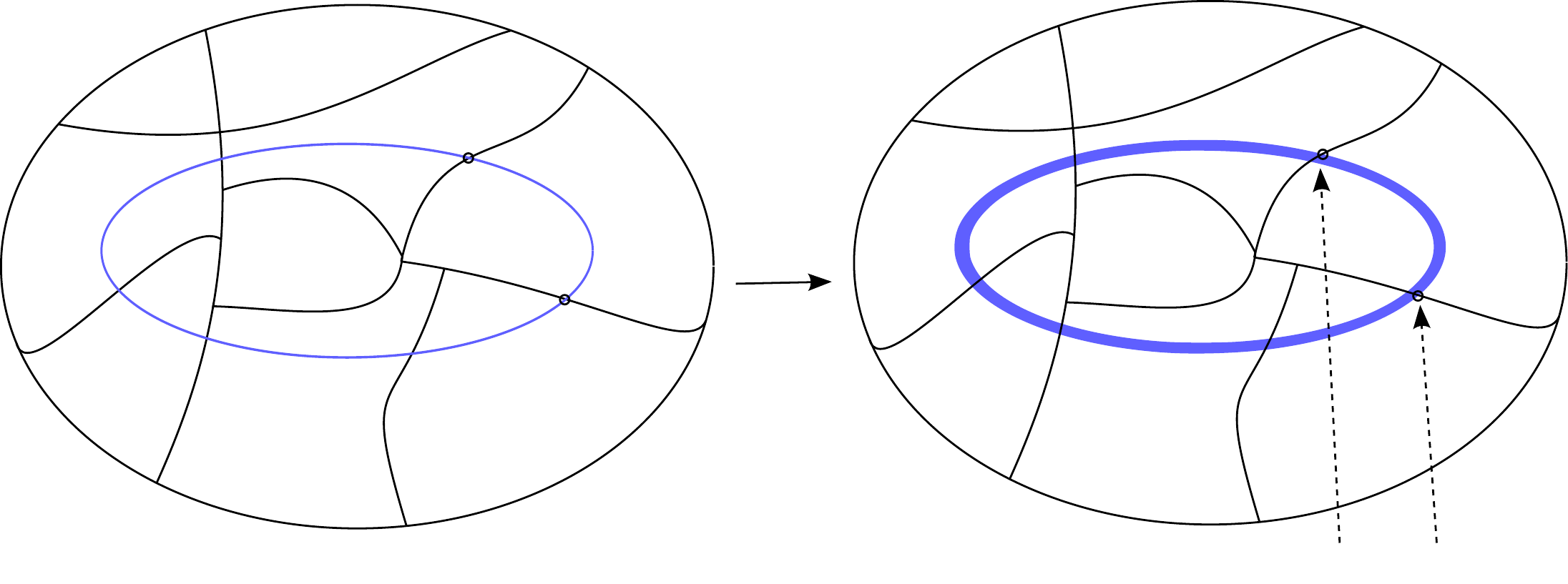\caption{Constructing $P'$}\label{fig1}
\end{figure}

\begin{proof}[Proof of Claim 1]

Let $w_i = \Psi^{-1}(v_i)$ and $W = \{w_i\}$.  For each pair $i$, there exist a unique cell $D_i$ of $P$ such that $\Psi^{-1}\bigl(m(q_i)\bigr)\subset D_i$.  Let $e_i$ be an arc in $D_i$ from $w_i$ to $w_{i+1}$ such that $e_i\cap P^{(1)} = \{w_i, w_{i+1}\}$.  In addition, we may assume that the arcs $e_i$ have disjoint interiors. Then $\beta_\t = e_1*e_2* \cdots *e_k$ is a simple closed curve.

Let  $P'^{(0)} = P^{(0)}\cup W$.  The edges of $P'$ are the closure of the connected subsets of $P^{(1)}\cup \beta_\t \backslash P'^{(0)}$. This gives $\beta_\t$ a cellular structure.  Each vertex of $\beta_\t$ corresponds to a crossing vertex of $\partial_o \t$.  Since $\beta_\t$ has at most $3F$ vertices and each edge cuts a piece of $P$ into two pieces, $P'$ has at most $4F$ pieces.

We can define $\Psi'|_{P^{(1)}} = \Psi$ and map $e_i$ to  $q_i$.  By \ref{cutlemma}, we may also assume that $\Psi'(e_i)$ has freely reduced label.  We can extend $\Psi'$ to the $2$-cells of $P'$ in the natural way.  Then $\Psi'\co P'^{(2)}\to\Delta$ is a partition of $\Delta$ which satisfies the first and third conditions of the claim.

The geodesic condition on crossing vertices guarantees that the mesh does not increase as we add the vertices $v_i$ and the edges $e_i$.

\end{proof}

Claim 1 gives us that $\Psi(\beta_\t)$ bounds a subdiagram of $\Delta$ with freely trivial boundary label and $\beta_\t$ bounds a subcomplex of $P'^{(1)}$.  There exist a simplicial tree $L_\t$ labeled by $a$-letters and a map $\Upsilon \co \beta_\t \to L_\t$ such that $\theta\circ\Psi'|_{\beta_\t} = \theta'\circ\Upsilon$ where $\theta'$ is a label preserving map from $L_\t$ into $\Gamma(G,S)$.  $L_\t$ is constructed by choosing a free reduction of $\lab{\Psi'(\beta_\t)}$.

We can replace the subdiagram in $\Delta$ bounded by $\Psi(\beta_\t)$ with $L_\t$.  This creates a pairing of $\Delta$-edges in $\Delta$.  What we want to be able to do is mirror this identification of edges on $\beta_\t$.  The problem is that this identification can pair proper segments of edges in $\beta_\t$.  To correct this we will need to add new vertices to $P'$ to insure that this identification respects $\Psi'$-edges.  In general, this will cause the mesh to increase since edges of $\beta_\t$ do not map to geodesics. So we will subdivide pieces to get a useful bound on our new mesh.  This is where the bound $B$ on the diameter of each piece comes into play.

We will say that a subpath of $\beta_\t$ is an \emph{$L_\t$-segment}, if all vertices of the edge path except possible the initial and terminal vertices have degree 2 in $\Upsilon(\beta_\t)$.

\begin{clm}

There exists a refinement $P''$ of $P'$ and a partition $\Psi''\co P''^{(2)}\to \Delta$ with $\Psi''(x) =\Psi'(x)$  for all $x\in P''^{(2)}= P'^{(2)}$ such that

\begin{enumerate}[(i)]
    \item the number of pieces of $\Psi''$ is no more than $9F^2+4F$,
    \item $\beta_\t$ is subdivided into at most $9F^2$ edges and each edge is an $L_\t$-segment, and
    \item $\rmesh_Z{(\Psi'')}\leq \max\{3(B+K), \mesh{(\Psi)}\}$ where $Z$ is the set of pieces of $P''$ which are not interior to $\beta_\t$.

\end{enumerate}
\end{clm}

\begin{figure}[ht]
\centering
\def\svgwidth{\columnwidth}
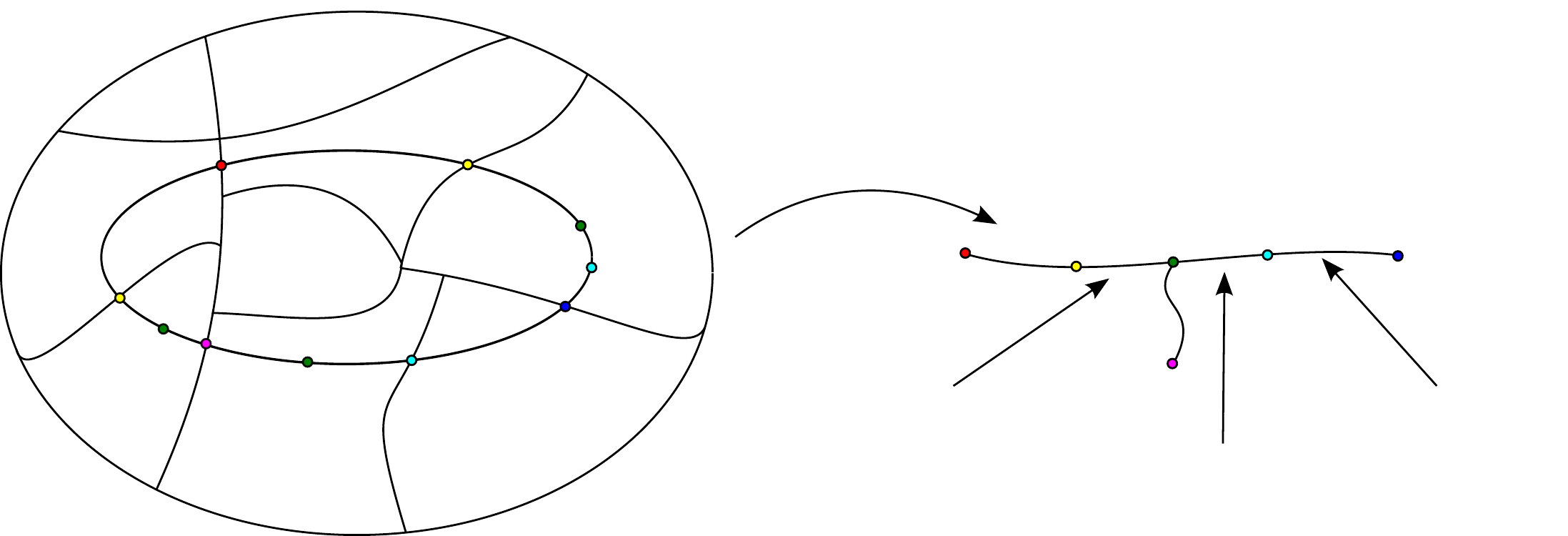\caption{$P''^{(1)}_1$ and $ L_\t$}\label{fig2}
\end{figure}

\begin{proof}[Proof of Claim 2]

$\Upsilon$ must map each $e_i$ injectively into $L_\t$, since $\lab{\Psi'(e_i)}$ is freely reduced.  Thus a vertex of $L_\t$ with degree 1 must be the image of a vertex of $e_i$ for some $i$ and $\Upsilon(\beta_\t)$ has at most $3F$ vertices of degree 1.  Then \ref{degree3vertices} implies that it has at most $3F$ vertices of degree greater than 2.  For each $i$, we can add new vertices to $e_i$ which are the unique $\Upsilon$-preimage of vertices of $L_\t$ with degree greater than 2 or the unique $\Upsilon$-preimage of a point of $\Upsilon(W)$ (see Figure \ref{fig2}).  Doing this subdivides $e_i$ into at most $3F$ edges which we will label by $e^i_j$ with their ordering induced by $e_i$.  This divides $\beta_\t$ into at most $9F^2$ edges.

Let $P_1''$ be the cellular decomposition obtained by adding $\{e_i^j\}$ to $P'$.  Notice the $ P_1''$ is not a partition of $A$ since it has vertices of degree $2$.

In $P'$ there existed exactly two pieces which share $e_i$ as a common edge, $p_i$ which is contained in the bounded component of $\R^2\backslash\beta_\t$ and $p_o$ which is contained in the unbounded component (see Figure \ref{fig2}).

\begin{figure}[ht]
\centering
\def\svgwidth{\columnwidth}
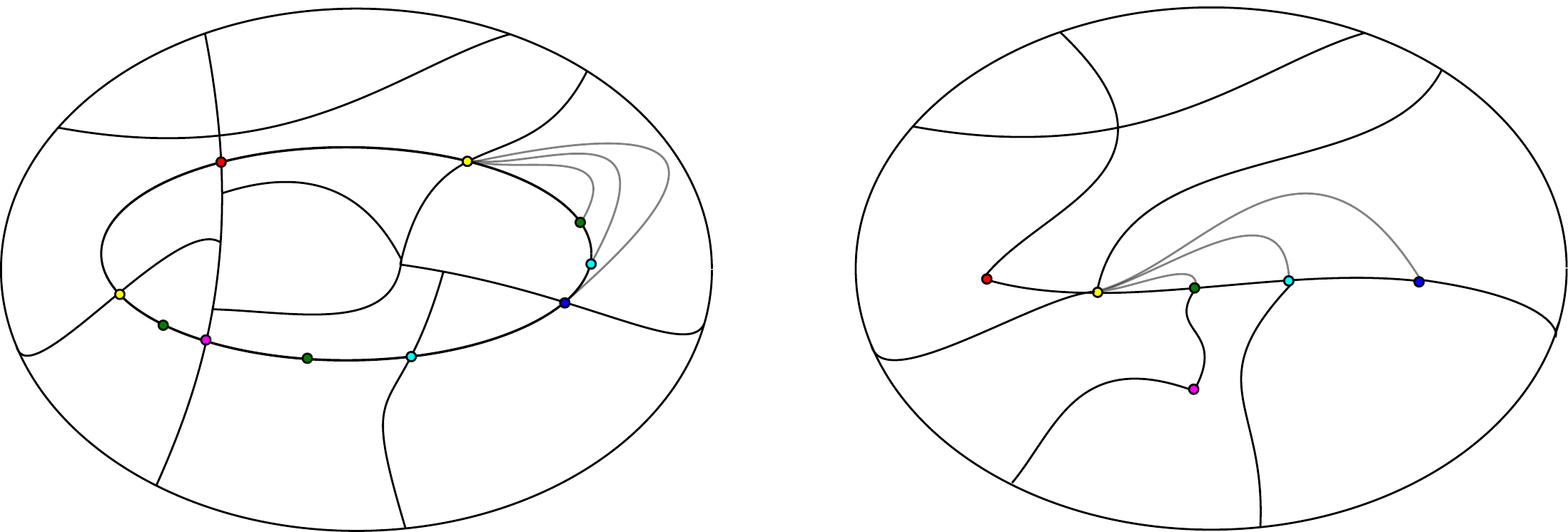\caption{Constructing $\widetilde P$}\label{fig3}
\end{figure}

We will now subdivide the piece $p_o$ to obtain pieces with bounded mesh (see Figure \ref{fig3}).  Let $f_j^i$ be an arc in $p_o$ from the initial vertex of $e^i_1$ to the terminal vertex of $e^i_j$ for all $j>1$.  We also will require that the new edges have disjoint interiors contained in $p_o$.  This subdivides $p_o$ into at most $3F+1$ pieces, i.e. we add $3F$ pieces to our count.  Repeating this process for each $i$, gives us a partition $P''$ of $A$.

We must now explain how to map these edges into $\Delta$. Each new edge connects points with image on the $\Psi(\beta_\t)$.  Thus we can send each edge to the reduced subpath of $\Psi(\beta_\t)$ connecting the images of their vertices and map the 2-cells in the natural way.  Let $\Psi''\co P''^{(2)}\to \Delta$ be this new partition.

The distance between $e_j^i$ and $e_{j'}^i$ is at most $B+2K$ for all $j$ and $j'$. This implies that the requirement on the mesh is then satisfied.

\end{proof}

We can replace the subdiagram of $\Delta$ bounded by $\Psi'(\beta_\t)$ with $L_\t$, creating a new van Kampen diagram $\Delta'$.  This also induces a paring of edges on $\beta_\t$ such that after removing the disc bounded by $\beta_\t$ and identifying edges of $\beta_\t$ according to this pairing, we obtain a new partition $\widetilde P$ of the quotient space $A'$.  If $m(\t)$ separates the boundary components of $A$, then $A'$ is a planar disc.  If $m(\t)$ doesn't separate the boundary components of $A$, then $A'$ is an annulus.  Then $\Psi''$ induces a map $\widetilde\Psi\co\widetilde P^{(2)} \to \Delta'$ with the desired properties, see Figure \ref{fig3}.

\end{proof}


\begin{defn}

Recall that $G$ has a presentation
$$\pres{ A\cup\{t_i\} }{\{u^{t_i}_{i,s} = v_{i,s} \} \text{ for } \ i = 1, . . . , k \text{ and } s = 1, . . . , j_i }$$
where $U_i = \langle u_{i,1}, . . . , u_{i,j_i}\rangle, V_i = \langle v_{i,1}, . . . , v_{i,j_i}\rangle$ are free subgroups with free generating sets $\{u_{i,j}\}$, $\{v_{i,j}\}$ respectively and $t_i$ are stable letters.

Let $X_i$ be the midpoints of the set of edges $\{(g,t_i) \ | \ g\in U_i\}$ in $\Gamma(G,S)$ .

By Britton's lemma, $gX_i$ separates $\Gamma(G,S)$ for every $g\in G$.  Let $x_1, x_2$ be two points in $X_i$ such that $x_2 =_{G} x_1u_{i,j}$.  Then in $\Gamma^2(G,S)$ we can find an arc joining $x_1$ to $x_2$ which intersects $\Gamma(G,S)$ only at $x_1$ and $x_2$. Let $T_i$ be the subset of $\Gamma^2(G,S)$ obtained by connecting all such points of $X_i$ by arcs which intersect $\Gamma(G,S)$ only at their endpoints.  Since $U_i$ is free, $T_i$ is a tree.  Then $T_i$ separates $\Gamma^2(G,S)$ and will be called the \emph{median tree} for $X_i$.  Notice that $X_i$, $T_i$ are not cellular subset of $\Gamma(G,S)$ or $\Gamma^2(G,S)$, even thought they do have a natural cellular structure.

Let $Z, Z'$ be subsets of $\Gamma(G,S)$.  We will say that $Z,Z'$ are \emph{$t$-separated} if there exists $g\in G$ and $i$ such that $Z, Z'$ are in distinct components of $\Gamma(G,S)\backslash gX_i$.  This is equivalent to saying that as subsets of  $\Gamma^2(G,S)$; $Z, Z'$ are in distinct components of $\Gamma^2(G,S)\backslash gT_i$.

\end{defn}

\begin{rmk}
Notice that $t$-separated does not imply $U_i$-separated or $V_i$-separated. Let $Z$ the set of vertices of $\Gamma(G,S)$ that have a label without pinches which begins with the letter $t_1$.  Let $Z'$ be the remainder of the vertices of $\Gamma(G,S)$.  Then $Z,Z'$ are in distinct components of $\Gamma(G,S)\backslash X_1$.  Since $Z\cup Z'$ contains all the vertices of $G$, they cannot be $U_i$-separated or $V_i$ separated for any $i$.  The point is that $X_i$ separates by removing midpoints of edges and $gV_i$ or $gU_i$ separates by removing vertices.
\end{rmk}

\begin{lem}\label{translates2}\ulabel{translates2}{Lemma}
Suppose $U_i$ is proper, $V_i$ is proper, or the number of stable letter in $S$ is greater than 1 .  Let $\gamma$ be a loop in $\Gamma(G, S)$ and $N> \diam {\gamma}$.  Then there exists elements $\{g_1,\cdots, g_N\}$ in $G$ such that $g_i\cdot\gamma, g_j\cdot\gamma$ are $t$-separated and $|g_ig_j^{-1}|\geq 2N$ for all $i\neq j$; and $|g_i|\leq 4N$.
\end{lem}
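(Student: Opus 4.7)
The plan is to mimic the construction in Lemma \ref{etranslates}, varying the inner word to suit each of the three hypotheses. First I will write down, case by case, a word for each $g_j$ that has no Britton pinch. When some $V_i$ is proper in $A$, pick $a\in A\setminus V_i$ and set $g_j\equiv t_i^{N}(t_i a)^j t_i^{-N}$; when some $U_i$ is proper, pick $a\in A\setminus U_i$ and set $g_j\equiv t_i^{-N}(t_i^{-1}a)^j t_i^{N}$; and when at least two stable letters $t_1,t_2$ occur, set $g_j\equiv t_1^{N}(t_1 t_2)^j t_1^{-N}$. Inspecting adjacent stable-letter pairs, the only candidate pinch in each word is $t_i a t_i^{-1}$, $t_i^{-1} a t_i$, or $t_1 t_2 t_1^{-1}$ respectively; the first two are excluded by the choice of $a$, and the third because $V_1\subseteq A$ does not contain $t_2$.

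Next I will verify the length inequalities. The same pinch-check applied to $g_i g_j^{-1}$ (which collapses to a word of the same form with inner exponent $i-j$, reading the inverted pattern $t_2^{-1} t_1^{-1}$ when $i<j$) shows it too is pinchless. Britton's lemma then gives $|g_i g_j^{-1}|\geq 2N + 2|i-j|\geq 2N$, while $|g_j|\leq 2N + 2j\leq 4N$ is immediate from the word length.

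For the $t$-separation, by left-invariance of the relation it suffices to separate $\gamma$ from $g_j\gamma$. I will take the separator $Y$ to be $t_i^N X_i$, $t_i^{-N} X_i$, or $t_1^N X_1$ in the three cases. Using Britton's lemma together with the description of $X_k$ as the midpoints of the $t_k$-edges issuing from vertices of $U_k$, any path in $\Gamma(G,S)$ from $1$ to $g_j$ must traverse an edge whose midpoint lies in $Y$, because the pinchless word for $g_j$ opens with the $(N{+}1)$-th power of the relevant stable letter. Hence $1$ and $g_j$ lie in distinct components of $\Gamma(G,S)\setminus Y$, and since every vertex incident to an edge of $Y$ is at word length at least $N$ from both $1$ and $g_j$, the loops $\gamma$ and $g_j\gamma$, each of diameter less than $N$ about its basepoint, remain inside their respective components.

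The main obstacle is the multi-stable-letter case, where neither $U_1$ nor $V_1$ need be proper and the $a$-trick from Lemma \ref{etranslates} is unavailable. The key observation that circumvents this is that the associated subgroups are contained in $A$ and so cannot contain any stable letter, which allows $t_2$ to stand in for $a$ while automatically avoiding every pinch. A minor bookkeeping check is needed to confirm that the same Britton/Bass--Serre argument still produces the separating set $t_1^N X_1$ in this case, but this follows immediately from the pinchless-word analysis above.
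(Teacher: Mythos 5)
Your proposal is correct and follows essentially the same route as the paper: build pinchless words of the form $t^{N}(\cdot)^{j}t^{-N}$ adapted to each of the three hypotheses, read off the length bounds from Britton's lemma, and separate the translates by a translate of the edge-midpoint set $X_i$ (your case-3 word $t_1^N(t_1t_2)^jt_1^{-N}$ is a harmless variant of the paper's $t_1^Nt_2^jt_1^{-N}$). The only quibble is a trivial off-by-one in the second case, where the vertices $t_i^{-N}ut_i$ incident to your separator $t_i^{-N}X_i$ pinch down to distance $N-1$ from the identity; shifting the separator one coset deeper fixes this.
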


\begin{proof}If $U_i$ or $V_i$ is proper, then $\{ g_j\}$ can be constructed as in \ref{etranslates}. If $S$ has at least two stable letters, then let $g_i = t_1^N t_2^it_1^{-N}$.  In any of the three cases, the proof of \ref{etranslates} also shows that the loops $\{g_j\cdot\gamma\}$ are pairwise $t$-separated.
\end{proof}

\begin{lem}\label{medians}\ulabel{medians}{Lemma}
Suppose that $\theta\co \Delta^{(2)} \to \Gamma^2(G,S)$ is the canonical label preserving cellular map from a van Kampen diagram $\Delta$ over $\pres{S}{R}$ to the Cayley complex.  Then $\theta^{-1}(gT_i)$ is a set of medians of $t_i$-bands in $\Delta$.
\end{lem}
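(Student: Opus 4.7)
\textbf{Proof proposal for Lemma \ref{medians}.} The plan is to unravel the definition of $T_i$ cell-by-cell and pull everything back through the cellular label-preserving map $\theta$, then recognize the resulting $1$-complex in $\Delta$ as a disjoint union of medians of $t_i$-bands.

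First I would record the intrinsic cell structure of $T_i\subset\Gamma^2(G,S)$. Its $0$-skeleton is (by construction) the set of midpoints $X_i$ of $t_i$-edges $(g,t_i)$ with $g\in U_i$; its $1$-cells are the arcs chosen to join $x_1$ to $x_2 = x_1 u_{i,j}$, and each such arc lies in the interior of a single $2$-cell of $\Gamma^2(G,S)$, namely the one bounded (up to base point and orientation) by the defining relator $u_{i,j}^{t_i} v_{i,j}^{-1}$. The crucial observation is that this $2$-cell has exactly two $t_i$-edges on its boundary, whose midpoints are precisely the endpoints of the arc, and no other part of $T_i$ meets its interior. Translating by $g$ does not change any of this structure; it just rigidly moves everything by an isometry of the Cayley complex.

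Next I would use that $\theta$ is cellular and label-preserving to describe $\theta^{-1}(gT_i)$ piece by piece. Since $\theta$ sends $t_i$-edges to $t_i$-edges, $\theta^{-1}$ of a point in $gX_i$ is a (possibly empty) set of midpoints of $t_i$-edges of $\Delta$. Since $\theta$ restricted to any $2$-cell of $\Delta$ is a homeomorphism onto the corresponding $2$-cell in $\Gamma^2(G,S)$ preserving the boundary label, $\theta^{-1}$ of an arc of $gT_i$ interior to a $2$-cell labeled by the relator $u_{i,j}^{t_i}v_{i,j}^{-1}$ is a disjoint union of arcs, one in each $2$-cell of $\Delta$ with the same (oriented) label, joining the midpoints of its two $t_i$-edges. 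Consequently $\theta^{-1}(gT_i)$ is a $1$-complex in $\Delta$ whose vertices are midpoints of certain $t_i$-edges of $\Delta$ and whose edges are arcs through $2$-cells of $\Delta$ labeled by an HNN-relator for $t_i$, each arc connecting midpoints of the two $t_i$-edges on the boundary of its $2$-cell.

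Finally I would read off from Definition \ref{M-bands} that this is exactly the data of the medians of a collection of reduced $t_i$-bands in $\Delta$. Concretely, pick any component $C$ of $\theta^{-1}(gT_i)$; start at a $t_i$-edge midpoint $v$ of $C$, walk along the arc of $C$ into the adjacent $2$-cell $\pi_1$ (which must be labeled by an HNN-relator involving $t_i$, by the previous paragraph), cross to the other $t_i$-edge of $\pi_1$, and continue. This procedure builds the sequence of cells $\pi_1,\pi_2,\ldots$ of a $t_i$-band whose median is precisely $C$; the procedure terminates either when the band hits the boundary of $\Delta$ or closes up as a $t_i$-annulus. Because $T_i$ is an embedded tree whose arcs are disjoint except at $X_i$, and because planarity of $\Delta$ prevents any further identifications, consecutive cells in this sequence are never mutual inverses, so the band is reduced.

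The one place that needs genuine care (rather than bookkeeping) is checking that no extraneous preimage points appear: in particular, that the preimage of $gT_i$ does not pick up stray $a$-edge midpoints or points in $2$-cells whose label is not one of the HNN-relators $u_{i,s}^{t_i}=v_{i,s}$. This follows because $T_i\cap\Gamma(G,S)\subset X_i$ consists only of midpoints of $t_i$-edges, and $T_i$ meets the interior of a $2$-cell of $\Gamma^2(G,S)$ only for those $2$-cells built from the $t_i$-relators; since $\theta$ preserves labels of edges and $2$-cells, this exclusion lifts verbatim to $\Delta$. Once this is observed the lemma is immediate.
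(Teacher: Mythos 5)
Your proposal is correct and follows essentially the same route as the paper's (much terser) proof: the only $2$-cells of $\Gamma^2(G,S)$ meeting $gT_i$ are those attached along the relators $u_{i,j}^{t_i}=v_{i,j}$, so the preimage of each arc of $gT_i$ is a collection of medial arcs of such cells in $\Delta$, and these assemble into medians of $t_i$-bands exactly as you describe. One caveat: your closing assertion that each resulting band is \emph{reduced} is neither part of the statement nor correctly justified --- planarity does not rule out a cancelable pair of mirror-image cells sharing a $t_i$-edge inside a band --- so that aside should be dropped.
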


\begin{proof}
The only cells in $\Gamma^2(G,S)$ intersecting $gT_i$ are those corresponding to relations of the form $u_{i,j}^t=v_{i,j}$.  The preimage of each edge of $gT_i$ is a median of such a cell in $\Delta$.
\end{proof}

\begin{lem}\label{medians2}\ulabel{medians2}{Lemma}
Suppose that $\Delta_A$ is an annular diagram such that the components of $\theta(\partial \Delta_A)$ are $t$-separated. Then there exist a $t$-annulus in $\Delta_A$ which separates the boundary components of $\Delta_A$.
\end{lem}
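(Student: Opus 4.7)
The plan is to exploit the fact that $t$-separation of the boundary components in $\Gamma(G,S)$ lifts, via continuity of the canonical map $\theta$, to a separation of the boundary components of $\Delta_A$ by the preimage of a translate of the median tree $T_i$. Since \ref{medians} identifies this preimage with a union of $t_i$-band medians, the obstruction to a path from $\partial_1\Delta_A$ to $\partial_2\Delta_A$ must lie in one of these medians, and a purely topological argument will force at least one to be a $t_i$-annulus whose median separates.

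In more detail, I would first use the hypothesis to select $g\in G$ and an index $i$ such that the two components $\partial_1\Delta_A,\partial_2\Delta_A$ of $\partial\Delta_A$ satisfy $\theta(\partial_1\Delta_A)$ and $\theta(\partial_2\Delta_A)$ lying in distinct components of $\Gamma^2(G,S)\backslash gT_i$; this is guaranteed because by construction $gX_i$-separation in the Cayley graph promotes to $gT_i$-separation in the Cayley complex. Continuity of $\theta\co\Delta_A\to\Gamma^2(G,S)$ then forces every path in $\Delta_A$ joining $\partial_1\Delta_A$ to $\partial_2\Delta_A$ to meet $\theta^{-1}(gT_i)$, so $\theta^{-1}(gT_i)$ separates the two boundary components of the annular diagram.

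Next I would apply \ref{medians} to describe $\theta^{-1}(gT_i)$ as a disjoint union of medians of $t_i$-bands in $\Delta_A$ (consecutive cells of a band share a $t_i$-edge whose midpoint is a vertex of $gT_i$, so an entire band's median lies in a single translate). Each such median is either a properly embedded arc (endpoints are midpoints of $t_i$-edges on $\partial\Delta_A$) or, in the case of a $t_i$-annulus, a simple closed curve in the interior of $\Delta_A$. Since distinct bands share no cells or edges, these medians are pairwise disjoint, including their endpoints.

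The key step will be the following elementary but essential topological fact: in a topological annulus $A$ with boundary $\partial_1\cup\partial_2$, a disjoint union of properly embedded arcs together with disjoint simple closed curves in the interior can separate $\partial_1$ from $\partial_2$ only if at least one of the circles is essential. I would prove this by induction on the number of arcs, noting that cutting $A$ along a properly embedded arc (with endpoints on a single boundary component or on different boundary components) leaves a space in which the images of $\partial_1$ and $\partial_2$ are still in a common path-component, so arcs alone never separate the two boundaries. Once this fact is established, it forces $\theta^{-1}(gT_i)$ to contain an essential simple closed curve, which is the median of some $t_i$-annulus in $\Delta_A$, and this annulus separates the two boundary components of $\Delta_A$ as required. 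The main obstacle is precisely this topological lemma; while standard, one must be careful that the different medians are genuinely disjoint and that all arc endpoints lie on $\partial\Delta_A$, both of which follow from the band structure and the fact that $\Delta_A$ has no cells crossed by more than one band of the same type.
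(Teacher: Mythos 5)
Your proposal is correct and follows essentially the same route as the paper: pass from $t$-separation to $gT_i$-separation in $\Gamma^2(G,S)$, pull back to see that $\theta^{-1}(gT_i)$ separates the boundary components of $\Delta_A$, and invoke \ref{medians} to identify this set with a union of $t_i$-band medians. The only difference is that you spell out the final topological step (disjoint properly embedded arcs and inessential circles cannot separate the two boundary circles of an annulus, so some median must be an essential closed curve, i.e.\ the median of a separating $t$-annulus), which the paper leaves implicit in ``the result follows from \ref{medians}.''
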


\begin{proof}
Since the components of $\theta(\partial \Delta_A)$ are $t$-separated, there exists $g\in G$ and $i$ such that they are in distinct components of $\Gamma^2(G,S)\backslash gT_i$.  Then $\theta^{-1}(gT_i)$ separates the components of $\partial \Delta_A$ and the result follows from \ref{medians}.
\end{proof}

\begin{thm}\label{HNN-free_associated}\ulabel{HNN-free_associated}{Theorem}
Let $G$ be a multiple HNN of a free group with free associated subgroups.  Then either all asymptotic cones of $G$ are simply connected or $G$ has an asymptotic cone with uncountable fundamental group.
\end{thm}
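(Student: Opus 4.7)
The plan is to follow the template of Theorem \ref{HNN-Amalg}, producing uncountably many pairwise $t$-separated translates of an essential loop via \ref{translates2} and arguing these translates are pairwise non-homotopic, but to replace the cut-point argument \ref{cut1} with the $t$-annulus removal machinery of Section \ref{section HNN}. Neither \ref{cut1} nor the exponential-distortion argument of \ref{HNN-Amalg2} applies here, since free subgroups of a free group in an HNN extension can be neither quasi-isometrically embedded nor exponentially distorted in general.

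Suppose $\gcon$ is not simply connected for some $(\omega,d)$ and fix an essential loop $\gamma=(\gamma_n)$ via \ref{limitloop}. Apply \ref{translates2} with $N_n=\Theta(d_n)$ to obtain a family $S_n\subset G$ of pairwise $t$-separated translates of $\gamma_n$ with $|g|\leq 4N_n$. For $g=(g_n)\in\prod^\omega S_n$ the sequence $(g_n\cdot\gamma_n)$ defines a loop $g\cdot\gamma$ in $\gcon$; since $\prod^\omega S_n$ is uncountable it suffices to show that distinct $g,h$ yield non-homotopic loops, because then the argument of \ref{fundcutpoint2} extracts an uncountable free subgroup of $\pi_1(\gcon)$.

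Assume toward contradiction that $g\cdot\gamma$ is homotopic to $h\cdot\gamma$. The homotopy produces reduced annular van Kampen diagrams $A_n$ in $\Gamma(G,S)$ whose boundary components are $g_n\cdot\gamma_n$ and $h_n\cdot\gamma_n$ and whose image in the cone has bounded diameter. By the \as{} $t$-separation of the boundary components and \ref{medians2}, each $A_n$ contains a $t$-annulus separating its boundary components. Equip $A_n$ with a $t$-shortest partition of mesh $o(d_n)$ (as in \ref{induce}) and iteratively excise all $t$-annuli via \ref{removing}, processing maximal $t$-annuli innermost-first; after each removal the ambient disc or annulus has strictly fewer $t$-annuli, and \ref{containsvertex} bounds the number of maximal annuli at each stage. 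Once all $t$-annuli are gone, the remnant is a diagram with all $t$-bands reduced and no $t$-annuli, so \ref{diameter} bounds its diameter by $O(|\partial|)$; moreover, the excision of a separating $t$-annulus converts the ambient annulus into a disc, so the result is a disc with freely trivial boundary label that admits a partition of mesh $O(K)$ via \ref{diameter2} and \ref{induce}. Concatenating with the pieces added during excision gives a partition of $g_n\cdot\gamma_n$ of mesh $o(d_n)$ with polynomially many pieces in $|\gamma_n|$, which by the Papasoglu criterion certifies $g\cdot\gamma$ is null-homotopic in $\gcon$. But $g\cdot\gamma$ is a $G$-translate and hence isometric copy of the essential loop $\gamma$, a contradiction.

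The principal technical obstacle is keeping the partition piece count polynomial under iterated application of \ref{removing}: the bound $F\mapsto 9F^2+4F$ per excision gives super-polynomial blowup over many $t$-annuli. The fix is to process maximal $t$-annuli from innermost outward so each excision only enlarges the partition along the boundary of the removed annulus --- bounded by local piece count --- yielding additive rather than multiplicative total growth. A secondary technicality is re-establishing the $t$-shortest property between excisions via \ref{cutlemma} and \ref{adapt} so that \ref{crosses} continues to apply and the partition remains of $h$-type.
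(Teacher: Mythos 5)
Your overall architecture (translates via \ref{translates2}, $t$-separation, \ref{medians2}, and \ref{removing}) matches the paper, but the endgame has a genuine gap, and it stems from missing the paper's key set-up move. The paper does not work in the originally given non-simply-connected cone: it first extracts a sequence of loops $\gamma_n$ with $P\bigl(\gamma_n,\frac{|\gamma_n|}{2}\bigr)\geq n$ and then \emph{defines} the scaling sequence by $d_n=|\gamma_n|$ (this is exactly why the conclusion is only that $G$ \emph{has} a cone with uncountable $\pi_1$). The contradiction at the end is then purely combinatorial: removing the single separating $t$-annulus supplied by \ref{medians2} turns the annular diagram into a disc diagram whose boundary is labeled by $\lab{\gamma_n}$, and \ref{removing} hands you a partition of $\gamma_n$ with at most $9F^2+4F$ pieces ($F$ fixed, independent of $n$) and mesh below $\frac{|\gamma_n|}{2}$, contradicting $P\bigl(\gamma_n,\frac{|\gamma_n|}{2}\bigr)\geq n$ once $n>9F^2+4F$. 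Your version instead tries to conclude that $g\cdot\gamma$ is null-homotopic from a single partition with ``polynomially many pieces in $|\gamma_n|$'' and mesh $o(d_n)$; no such criterion exists. A partition whose piece count grows with $n$ does not pass to a finite partition in the cone, and Papasoglu's argument requires either a uniformly bounded number of pieces (so the limit partition is finite with degenerate pieces) or an iteration at all scales. Likewise, your claim that the $t$-annulus-free remnant ``admits a partition of mesh $O(K)$'' does not follow from \ref{diameter} or \ref{diameter2}, which only bound diameter.

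A consequence of missing this is that your ``principal technical obstacle''---controlling piece-count blowup under iterated excision of all $t$-annuli---is a problem you created rather than one the proof requires: only the one $t$-annulus separating the two boundary components needs to be removed, and a single application of \ref{removing} suffices. Two smaller omissions: you never handle the case excluded by the hypotheses of \ref{translates2} (one stable letter with both associated subgroups equal to $F$), which the paper dispatches via the quadratic Dehn function of \cite{BrGr}; and the mesh bookkeeping in the paper (choosing triangular pieces with $\diam{h(D)}\leq \frac{1}{84L}$ so that \ref{diameter2} gives $\diam{\theta\comp\Psi(D)}<\frac{|\gamma_n|}{6}$, feeding the $3(B+2K)$ bound of \ref{removing}) is what guarantees the final mesh is below $\frac{|\gamma_n|}{2}$; your sketch leaves this quantitative chain unverified.
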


\begin{proof}
If $G$ has only one stable letter and both associated subgroups are not proper, then $G$ has a quadratic Dehn function (see \cite{BrGr}) and every asymptotic cone of $G$ is simply connected.

If there exists an asymptotic cone of $G$ which is not simply connected, then there exists a sequence of loops $\gamma_n$ in $\Gamma(G,S)$ such that $P\bigl(\gamma_n,\frac{|\gamma_n|}{2}\bigr)\geq n$ for all n.  Let $d_n =|\gamma_n|$.  Then $d_n$ diverges \as~ and $\gamma(t) = \bigl(\gamma_n(t)\bigr)$ is a loop which has no finite partition in $\gcon$.

Using \ref{translates2}, we can choose $S_n = \{g_{n,1},\cdots, g_{n, k_n}\}$ of element of $G$ such that

\begin{enumerate}[a)]
    \item if $i\neq j$, then $g_{n, i}\cdot\gamma_n$ and $g_{n,j}\cdot\gamma_n$ are $t$-separated and
    \item for all $i$, $2 \diam{\gamma} d_n\leq |g_{n,i}| \leq 4 \diam{\gamma}d_n$.
\end{enumerate}

\begin{clm*}
Let $g= (g_n), h= (h_n)$ be distinct elements in $\prod^\omega S_n$.  Then $g\cdot\gamma$ is a well-defined loop $\gcon$ and $g\cdot\gamma$ is not homotopic to $h\cdot\gamma$.
\end{clm*}

The first assertion follows from the fact that $g_n$ grows big O of the scaling sequence.

Suppose that $g\cdot\gamma$ is homotopic to $h\cdot\gamma$. Then we have a homotopy $h\co A\to \gcon$ between the two loops where $A$ is a planar annulus.  Let $P$ be a partition of $A$ where each piece is a triangle such that $\diam {h(D)} \leq \frac{1}{84L}$ for each piece $D$ of $P$.  Then we can chose partitions $\Pi_n\co P^{(0)} \to \Gamma(G,S)$ such that $\bigl(\Pi_n(x)\bigr) = h(x)$ for all $x\in P^{(0)}$.  As in Remark \ref{induce}, $\Pi_n$ induces a $t$-shortest partition $\Psi_n\co P^{(2)}\to \Delta_n'$ where $\Delta_n'$ is an annular van Kampen diagram where both boundary paths are labeled by $\lab{\gamma_n}$.  The $\mesh {(\Psi_n)}\leq \frac{|\gamma_n|}{60L} + o(|\gamma_n|)< \frac{|\gamma_n|}{30L}$ \as.  \ref{diameter2} implies that the $\diam{\theta\comp\Psi(D)}\leq 5L\mesh{(\Psi)}< \frac{|\gamma_n|}{6}$ \as.

Since $g\neq h$, $g_n\neq h_n$ \as~ and  the loops $g_n\cdot\gamma_n$ and $h_n\cdot\gamma_n$ are $t$-separated \as.  \ref{medians2} implies that there exists a $t$-annulus in $\Delta_n$ which separates the two boundary components of $\Delta_n$ \as.  \ref{removing} implies we can remove this $t$-annulus to obtain a partition $\widetilde\Psi_n$ of a circular diagram $\Delta_n'$ with $\lab{\partial\Delta_n'}= \lab{\gamma_n}$ \as.  Notice that $\mesh{(\widetilde\Psi_n)}< 3(\frac{|\gamma_n|}{6} + K)$ and has at most $9F^2 + 4F$ where $F$ is the number of pieces of $P$.  This then contradicts our choice of $\gamma_n$.

\end{proof}

\newpage

\newpage
\bibliographystyle{plain}
\bibliography{bib}

\end{document}